\newtheorem{theorem}{Theorem}[section] % 1st argument is your name for it
\newtheorem{lemma}[theorem]{Lemma}     % 2nd argument is what is printed
\newtheorem{corollary}[theorem]{Corollary}
\begin{document}
\noindent{\LARGE\bf Boundedness of semilinear Duffing equations at resonance with oscillating nonlinearities}%题目
%\noindent{\emph{Dedicated to Professor Qi-keng LU on the occasion of his {\rm 80}th birthday}}
\vspace{0.5 true cm}\\
 \noindent{\normalsize Zhiguo Wang$^{1}$,\ Daxiong Piao$^{2}$ and  Yiqian Wang$^{3\dag}$

\footnotetext{\baselineskip 10pt
  \ \emph{2010 Mathematics Subject Classification}  34C15, 70H08.\\
 {\indent$^\dag$  The corresponding author}\\
 {\indent The first author  was supported by the National
Natural Science Foundation of China
 grant no. 11101299, 11271277. The second author was supported by the
Natural Science Foundation of Shandong Province grant no.
ZR2012AM018. The third author was supported by the National
Natural Science Foundation of China
 grant no. 11271183.} }} \vspace{0.2 true cm}
\renewcommand{\baselinestretch}{1.5}\baselineskip 12pt
\noindent{\footnotesize\rm  $^1$ School of Mathematical Sciences,
Soochow University, Suzhou
215006, P.R.China \\
(email: zgwang@suda.edu.cn )\vspace{4mm}\\
$^2$  School of Mathematical Sciences,  Ocean University of China,
Qingdao
266100, P.R.China\\
\footnotesize(email:dxpiao@ouc.edu.cn )\vspace{4mm}\\
$^3$ Department of Mathematics,
 Nanjing University, Nanjing 210093, P.R.China\\
\footnotesize(email: yiqianw@nju.edu.cn )}\\
\vspace{4mm}\\
\baselineskip 12pt \renewcommand{\baselinestretch}{1.18}

 \begin{abstract}  In this paper, we prove the  boundedness of all the solutions   for  the equation $\ddot{x}+n^2x+g(x)+\psi'(x)=p(t)$
 with the Lazer-Leach condition on $g$ and $p$, where $n\in \mathbb{N^+}$, $p(t)$ and $\psi'(x)$ are periodic and $g(x)$ is bounded. For the critical situation that $\big |\int_0^{2\pi}p(t)e^{int}dt \big
  |=2\big|g(+\infty)-g(-\infty)\big|$, we also prove a sufficient and necessary condition for the boundedness if $\psi'(x)\equiv0$.
\\
\\
\noindent {\it Keywords}: Hamiltonian system;\ boundedness;\ canonical transformation;
 \noindent {\ at resonance; \  oscillating nonlinearities;\ Moser's
 theorem.}
\end{abstract}
\section{Introduction and the main results}
 The study of semilinear equations at resonance has a long
history. The interest in this model is motivated both by its
connections to application and by a remarkable richness of the
related dynamical systems.

It is well known that the linear equation
$$\ddot{x}+n^2x=\sin nt,\ \ \ \quad \ n\in\mathbb{N^+} $$
has no bounded solutions, where $\displaystyle\ddot{x}=\frac{d^2x}{dt^2}$.
Another interesting example was constructed by Ding \cite{Ding},
who proved that each solution of the equation
$$\ddot{x}+n^2x+
\arctan x=4\cos nt,\ \ \ \quad \ n\in\mathbb{N^+} $$ is unbounded. Due to
these resonance phenomenons, the existence of bounded solutions
and the boundedness of all the solutions for semilinear equation at resonance
are very delicate.

In 1969, Lazer and Leach \cite{LL} studied the following
semilinear equations:
\begin{equation}\label{semi}
 \ddot{x}+n^2x+ g (x)=p(t),\
\ \ \quad \ n\in\mathbb{N^+}, \end{equation} where $p(t+2\pi)=p(t)$ and $g$ is
continuous and bounded. They proved that if \begin{equation}\label{L-L}
 \left|\int_0^{2\pi}p(t)e^{-int}dt\right|< 2(\liminf_{x\rightarrow +\infty}g-\limsup_{x\rightarrow -\infty}g),
\end{equation}
%or
%$$
%\left|\int_0^{2\pi}p(t)e^{-int}dt\right|< 2(\liminf_{x\rightarrow -\infty}g-\limsup_{x\rightarrow +\infty}g),
%$$
then (\ref{semi}) has at least one $2\pi$-periodic solution.
Moreover, they obtained that each solution of (\ref{semi}) is
unbounded if
$$
 \left|\int_0^{2\pi}p(t)e^{-int}dt\right|\ge 2(\sup g-\inf g).
$$
 Thus if \begin{equation}\label{monotone}\lim_{x\rightarrow-\infty} g(x)=g(-\infty)\le g(x)\le g(+\infty)=\lim_{x\rightarrow+\infty} g(x),\quad \forall x\in R,\end{equation}
then condition (\ref{L-L}) is sufficient and
necessary for the existence of bounded solutions. For this reason, (\ref{L-L}) is called Lazer-Leach condition.

In 1996, Alonso and Ortega \cite{Alo01} studied the
following equation: \begin{equation}\label{01}
\ddot{x}+n^2x+g(x)+\psi'(x)=p(t),\quad  n\in \mathbb{N^+}, \end{equation}
where $g$ and $p$ are as same as above and the perturbation
$\psi'(x)$ will be small at infinity in the following sense:
$$
\lim_{|x|\rightarrow \infty}\frac{\psi(x)}{x}=0.
$$ They proved that each solution with
large initial condition is unbounded if \begin{equation}\label{SN-condition}
\nonumber \left|\int_0^{2\pi}p(t)e^{-int}dt\right|> 2(H-K)
,\end{equation} where $$H=\max\{\limsup_{x\rightarrow -\infty}g,\quad
\limsup_{x\rightarrow +\infty}g\},\qquad
K=\min\{\liminf_{x\rightarrow -\infty}g,
\quad\liminf_{x\rightarrow +\infty}g\}.$$

Other conditions for the existence of bounded and unbounded
solutions are described in
\cite{Alo01,Alo02,GM,KM,Mawhin,MW} and
their references.

 The pioneering work on the boundedness of (\ref{semi}) was due to Ortega \cite{Ort02}. He proved a variant of Moser's small twist theorem, by which he obtained the boundedness  for the equation
$$
 \ddot{x}+n^2x+h_L(x)=p(t),\quad p(t)\in {\cal C}^5(\mathbb{R}/2\pi\mathbb{Z}),
$$
 where $L>0$ and $h_L(x)$ is of the form
 $$
 h_L(x)=\left\{\begin{array}{ll}
 L,& \quad {\rm if}\quad x\ge 1,\\
 Lx,&\quad {\rm if}\quad -1\le x\le 1,\\
 -L,& \quad {\rm if} \quad x\le -1,
 \end{array}\right.
 $$
and $p(t)$ satisfies
$$
\frac{1}{2\pi}\left|\int_0^{2\pi}p(t)e^{-int}dt\right|<\frac{2L}{\pi}.
$$

 Then Liu \cite{Liu01} studied the equation (\ref{semi}) by the assumptions:
 {$ p(t)\in {\cal C}^7(\mathbb{R}/2\pi\mathbb{Z}),$} $g(x)\in {\cal C}^6(\mathbb{R})$ satisfying
\begin{equation}\label{phi1}
 g(\pm\infty)=\lim_{x\rightarrow\pm\infty} g(x)\ {\rm\  exist\ and \ are\ finite,\ }\
\end{equation} and \begin{equation}\label{}
\nonumber \lim_{|x|\rightarrow +\infty}x^kg^{(k)}(x)=0,\ 0\leq k\leq6.
\end{equation}
 With Ortega's small twist theorem, he showed that the Lazer-Leach condition (\ref{L-L}) is sufficient for
 the boundedness of (\ref{semi}).
Moreover, if (\ref{monotone}) holds true, then Lazer-Leach's
result \cite{LL} implies that (\ref{L-L}) is also necessary for
the boundedness.
%\begin{equation}\label{asymptotic}g(-\infty)\le g(x)\le g(+\infty).\end{equation}

 One can refer to \cite{Liu01,Liu02,Liu03,Liu04,Ort02,Wangx} for the applications of Ortega's small twist theorem.

 In this paper, we study the boundedness of the equation
 \begin{equation}\label{01-1}
\ddot{x}+n^2x+g(x)+\psi'(x)=p(t),\quad  n\in \mathbb{N},
\end{equation} where
 $g(x)$ is bounded, $\psi(x+T)=\psi(x)$
 %\in
 %C^{{\Upsilon_1}}(\mathbb{R}/(T\Z))$
and $p(t+2\pi)=p(t)$.
%\in C^{\Upsilon_2}(\mathbb{R}/(2\pi\Z))$ with
%$\Upsilon_1,\Upsilon_2\geq5$ satisfying

 %\begin{equation}\label{variant-phi2}
  %\lim_{|x|\rightarrow +\infty}x^{20}g^{(20)}(x)=0,
%\end{equation}
  We will prove that the Lazer-Leach condition (\ref{L-L}) on $g$ and $p$ is sufficient for the boundedness of (\ref{01-1}) with the existence of an oscillating term $\psi$.
  In other words, the oscillating term  does not play any role in the boundedness. More precisely,
 we prove that:

 \begin{theorem}\label{Th1}
Assume  $g(x)\in C^{\Upsilon_1}(\mathbb{R})$, $\psi'(x)\in C^{{\Upsilon_1}}(\mathbb{R}/T\mathbb{Z})$ and $p(t)\in C^{\Upsilon_2}(\mathbb{R}/2\pi\mathbb{Z})$ with $
\Upsilon_1=18,\ \Upsilon_2=14.$  Suppose the following two conditions hold true:
\begin{eqnarray}\label{}
  \nonumber(A_1)\ \ \ &&g(\pm \infty)=\lim_{x\rightarrow\pm\infty}g(x)\ \mbox{ exist},\\
 \nonumber && \lim_{|x|\rightarrow +\infty}x^kg^{(k)}(x)=0,\ 0\leq
  k\leq\Upsilon_1;\\
  \nonumber(A_2)\ \ \ && \int_0^T \psi(x)dx=0.
           \end{eqnarray}

 Then under the following Lazer-Leach condition:
\begin{equation}\label{LLc}
  {\Bigg |\int_0^{2\pi}p(t)e^{int}dt \Bigg
  |<2\big|g(+\infty)-g(-\infty)\big|,}
 \end{equation}every solution of (\ref{01-1})
 is bounded, i.e., for every $(t_0 , x_0 ,
\dot{x}_0),$ the solution $x(t; t_0 , x_0 , \dot{x}_0)$ exists for
all $t\in \mathbb{R}$ and it holds that $$sup_{ t\in\mathbb{R}
}\big( \big|x(t; t_0 , x_0 , \dot{x}_0)\big|+\big|\dot{x} (t; t_0
, x_0 , \dot{x}_0)\big| \big)<\infty.$$
%Moreover, there are infinitely many periodic solutions and
%quasi-periodic solutions with large amplitude of the form
%$$x(t)= u (\omega t, t),$$ with $u$ being defined on a 2D-torus.
\end{theorem}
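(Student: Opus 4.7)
My plan is to recast equation (\ref{01-1}) as a time-periodic Hamiltonian system, conjugate it via a sequence of canonical changes of variables to a nearly integrable twist map in a neighborhood of infinity, and then apply Ortega's variant of Moser's small twist theorem, following the strategy developed in \cite{Ort02,Liu01}. Concretely, I would start from the Hamiltonian
\[
H(x,y,t)=\tfrac12 y^2+\tfrac{n^2}{2}x^2+G(x)+\psi(x)-xp(t), \qquad G'=g,
\]
and introduce the action-angle coordinates $(I,\theta)$ of the harmonic oscillator $\ddot x+n^2 x=0$ through $x=\sqrt{2I/n}\sin\theta$, $y=\sqrt{2nI}\cos\theta$, so that the integrable part is $nI$. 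Because $\partial H/\partial I=n+O(I^{-1/2})$ does not vanish for $I$ large, I can swap the roles of $\theta$ and $t$ and obtain a new Hamiltonian $\mathcal H(t,I;\theta)$, $2\pi$-periodic in both $t$ and $\theta$, in which $\theta$ plays the role of time.

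The next step is to perform a finite sequence of averaging canonical transformations that successively remove $\theta$-dependence up to order $I^{-N/2}$ for $N$ as large as required. Since the forcing and the oscillator share the frequency $n$, a resonant term of order $I^{-1/2}$ inevitably survives, and its coefficient is precisely determined by
\[
\int_0^{2\pi}p(t)e^{int}\,dt \qquad\text{and}\qquad g(+\infty)-g(-\infty).
\]
The Lazer--Leach hypothesis (\ref{LLc}) then forces this residual term to induce a strictly monotone dependence of the rotation number of the time-$2\pi$ map on $I$, which is exactly the small-twist condition needed for Ortega's theorem.

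The principal obstacle is the oscillating perturbation $\psi'(x)$. Once written in action-angle coordinates, $\psi(\sqrt{2I/n}\sin\theta)$ oscillates rapidly in $\theta$ as $I\to\infty$, and each differentiation in $I$ produces a chain-rule factor of $\sqrt I$ that threatens to destroy the smallness one needs for the normal form. To deal with this I would split the $\theta$-integrals into two regimes. Away from the turning points $\theta=\pm\pi/2$ the derivative $\partial_\theta(\sqrt{2I/n}\sin\theta)$ is of size $\sqrt I\,|\cos\theta|$, so repeated integration by parts in $\theta$, combined with the zero-mean hypothesis $(A_2)$ and the $T$-periodicity of $\psi$, produces arbitrarily negative powers of $I$. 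Near the turning points one cuts off to a window of $\theta$-length $\sim I^{-\alpha}$ and estimates crudely using the boundedness of $\psi$ and its derivatives. The large regularity assumption $\Upsilon_1=18$ is what lets me iterate this enough times and still keep control over all derivatives after composition with the finite chain of averaging transformations. The upshot is that $\psi$ only contributes higher-order corrections to the resonant Lazer--Leach term, and in particular does not spoil the twist.

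Finally, after enough normalization steps the Poincar\'e map (in the $\theta$-sense) of the transformed Hamiltonian is a small perturbation of a twist map whose frequency varies monotonically with the action. Ortega's small twist theorem then provides invariant curves at arbitrarily large values of $I$; every orbit of the Poincar\'e map is trapped between two consecutive invariant curves, so $I(t)$ stays bounded on $\mathbb R$. Since the whole sequence of coordinate transformations is bounded on bounded sets of $I$, this translates back into the claimed estimate $\sup_{t\in\mathbb R}\bigl(|x(t)|+|\dot x(t)|\bigr)<\infty$ for every initial data.
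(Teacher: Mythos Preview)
Your broad strategy---action--angle variables, swap $\theta$ and $t$, average, apply a small-twist theorem---is the right one, and your stationary-phase treatment of the $\theta$-average of $\psi(\sqrt{2I/n}\sin\theta)$ is essentially the paper's use of Pan--Yu's oscillatory integral estimate (Lemma~\ref{L10}), which indeed yields $[\widetilde f_3]=O(h^{-1/4})$. However, there is a genuine gap in what you propose to do \emph{after} that first averaging step.

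The difficulty is not the size of the average but the $\theta$-derivatives of the remainder. After you subtract the average and build the generating function $S=\int_0^\theta(\widetilde f_3-[\widetilde f_3])$, the new perturbation contains a term of the form $\partial_t\alpha\cdot\partial_h S$ (see (\ref{R043}) and Lemma~\ref{L11}); this term is $O(h^{-1/4})$ in $C^0$, but each $\partial_\theta$ still costs a factor $h^{1/2}$, so $\partial_\theta^j$ of it is only $O(h^{(j-1)/2})$. Iterating your integration-by-parts/averaging argument does not cure this: every further averaging in $\theta$ produces another ``worst term'' with the same growth in $\theta$-derivatives, so you never reach $C^4$-smallness and neither Ortega's nor Moser's theorem applies. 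This is exactly why the paper stresses that the polynomial growth condition (\ref{growthderivative}) fails here and that ``the perturbation in the sublinear system can not be reduced to be small enough in $C^4$-topology by only repeated applications of the common method of generating function.''

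The mechanism you are missing is the iteration Lemma~\ref{P53-1}/\ref{P5s}: each canonical transformation actually \emph{improves} the $\theta$-smoothness of the remainder by one unit, at the price of one unit of $t$-smoothness. This is where the hypothesis $\Upsilon_2=14$ on $p$ is spent (your proposal attributes the high regularity entirely to $\Upsilon_1$). After $\nu$ such steps the remainder satisfies $|\partial_\theta^j R_6|\le Ch^{-1/4}$ for $j\le\nu$; one then swaps the roles of time and angle \emph{a second time} (Section~\ref{4.5}) to obtain a genuinely superlinear Hamiltonian, performs a further finite chain of averagings (Lemma~\ref{L15}), and only then is the Poincar\'e map close to a twist in $C^4$. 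The paper finishes with Moser's classical theorem rather than Ortega's variant, the resonance having been removed earlier by the rotation $t=t_1+\theta$; your choice of Ortega's theorem is not wrong in itself, but it does not bypass the $\theta$-derivative problem above.
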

%\begin{remark}
%Equation (\ref{LLc}) is the Lacer-Leach condition, where the
%module
% $\Big|A+iB\Big|=(A^2+B^2)^{\frac{1}{2}}.$
%\end{remark}

\begin{remark}
 It is no loss of generality to assume the condition $(A_2)$, since if $\int_0^T \psi(x)dx\not=0$, we can make the transformation:
  $\tilde{\psi}(x)=\psi(x)-\frac{1}{T}\int_0^T \psi(x)dx.$
  %$\tilde{p}(t)=p(t)-\frac{1}{T}\int_0^T \psi(x)dx$.
\end{remark}

 On the other hand, if
\begin{equation}
   {\Bigg |\int_0^{2\pi}p(t)e^{int}dt \Bigg
  |>2\big|g(+\infty)-g(-\infty)\big|,}
 \end{equation}
then Alonso-Ortega's result \cite{Alo01} implies the existence of unbounded solutions for (\ref{01-1}). Thus we obtain the following {conclusion}:
\begin{corollary}\label{cor1.2} Assume  $g(x),\ \psi(x)$ and $p(t)$ satisfy the conditions in Theorem \ref{Th1}. If
$\Bigg|\int_0^{2\pi}p(t)e^{int}dt
\Bigg|\not=2(g(+\infty)-g(-\infty))$, then (\ref{LLc}) is
sufficient and necessary for the boundedness of (\ref{01-1}).
\end{corollary}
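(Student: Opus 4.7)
The plan is to obtain the two directions of the biconditional separately, each by invoking one of the two results already available in the exposition.

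For the sufficiency direction, I would assume that the Lazer--Leach inequality (\ref{LLc}) holds. Since the hypotheses of Corollary \ref{cor1.2} repeat verbatim the hypotheses of Theorem \ref{Th1} on $g$, $\psi$ and $p$, Theorem \ref{Th1} applies directly and yields boundedness of every solution of (\ref{01-1}). Nothing further is required.

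For the necessity direction, I would argue by contraposition: suppose (\ref{LLc}) fails. Combined with the standing hypothesis $|\int_0^{2\pi}p(t)e^{int}dt|\ne 2(g(+\infty)-g(-\infty))$ of the corollary, this promotes the failed non-strict inequality into the strict one
\[
\Big|\int_0^{2\pi}p(t)e^{int}\,dt\Big|>2\big|g(+\infty)-g(-\infty)\big|.
\]
The idea is then to invoke the Alonso--Ortega result from \cite{Alo01}, which furnishes an unbounded solution of (\ref{01-1}) as soon as two items are checked. First, the perturbation must satisfy $\lim_{|x|\to\infty}\psi(x)/x=0$; this is automatic here, since $\psi$ is $T$-periodic and continuous, hence bounded on $\mathbb{R}$. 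Second, the indicators used in \cite{Alo01},
\[
H=\max\{\limsup_{x\to-\infty}g,\,\limsup_{x\to+\infty}g\},\qquad K=\min\{\liminf_{x\to-\infty}g,\,\liminf_{x\to+\infty}g\},
\]
must be identified with $g(\pm\infty)$: under assumption $(A_1)$ of Theorem \ref{Th1} the two-sided limits $g(\pm\infty)$ exist, so $H=\max\{g(-\infty),g(+\infty)\}$ and $K=\min\{g(-\infty),g(+\infty)\}$, whence $2(H-K)=2|g(+\infty)-g(-\infty)|$. Thus the Alonso--Ortega hypothesis coincides with the strict inequality above, and the resulting unbounded solution contradicts the assumed boundedness of every solution of (\ref{01-1}).

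\textbf{Main obstacle.} There is essentially no technical obstacle: the corollary is a direct synthesis of Theorem \ref{Th1} (sufficiency) with the existence-of-unbounded-solutions result in \cite{Alo01} (necessity). The only delicate points are bookkeeping in the necessity direction -- converting the excluded equality case into a strict inequality once (\ref{LLc}) fails, and matching $H-K$ from \cite{Alo01} with $|g(+\infty)-g(-\infty)|$ using $(A_1)$, together with the observation that periodicity of $\psi$ supplies the decay hypothesis $\psi(x)/x\to 0$ -- after which the two halves fit together mechanically.
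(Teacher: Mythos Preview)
Your proposal is correct and follows exactly the paper's own reasoning: the paper derives the corollary directly from Theorem~\ref{Th1} for sufficiency and from the Alonso--Ortega result \cite{Alo01} for necessity, without giving any further detail. Your write-up is in fact more careful than the paper's, since you explicitly verify that $(A_1)$ collapses $H-K$ to $|g(+\infty)-g(-\infty)|$ and that periodicity of $\psi$ gives $\psi(x)/x\to 0$.
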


For the {critical} situation that \begin{equation}\label{LL0}
 {\Bigg |\int_0^{2\pi}p(t)e^{int}dt \Bigg
  |=2\big|g(+\infty)-g(-\infty)\big|,}
 \end{equation}
 the only known result for  equation (\ref{semi}), see \cite{LL}, is for the case \begin{equation}\label{not-monotone}
\min\{g(-\infty), g(+\infty)\}\le  g(x)\le \max\{g(-\infty), g(+\infty)\}.
\end{equation}

In the following, we will consider the boundedness of (\ref{semi}) if $g$ does {\bf not} satisfy the condition (\ref{not-monotone}).

  {Suppose that $g(x)\in C^{\Upsilon_1}(\mathbb{R})$,
$p(t)\in C^{\Upsilon_2}(\mathbb{R}/2\pi\mathbb{Z})$  and there exist three constants $c_{\pm}>0$ and $0<d\not=1$ such that
\begin{equation}\label{G+-}
\lim_{|x|\rightarrow \pm\infty}x^{k-1+d}G_{\pm}^{(k)}(x)=0,\ 0<
  k\leq  {\Upsilon_1+1},
\end{equation}
where $G_{\pm}(x)=\int_0^x(g(x)-g(\pm\infty))dx-c_{\pm}\cdot(1+x^2)^{\frac{1-d}{2}}$.}

We have the following result:
\begin{theorem}\label{Th2}
 {Let   {$\Upsilon_1>5+\max\{4,7d, 28d-3\}$,
$ \Upsilon_2>1+\max\{4,7d, 28d-3\}$}, (\ref{phi1}),
(\ref{LL0}) and (\ref{G+-}) hold true.
Then the sufficient and necessary condition for the boundedness of (\ref{semi}) is $d<1$.}
\end{theorem}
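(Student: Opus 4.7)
The plan is to follow the Hamiltonian/canonical--transformation framework already used for Theorem~\ref{Th1}. First I would rewrite (\ref{semi}) as a planar, time--periodic Hamiltonian system; pass to polar--type action--angle variables $(\rho,\theta)$ in which the unperturbed harmonic oscillator is a uniform rotation $\dot\theta=n$; and then interchange the roles of $t$ and $\theta$, so that the dynamics is governed by a new Hamiltonian $H(\rho,\tau;\theta)$ that is $2\pi$--periodic in the new ``time'' $\theta$. Both halves of the statement can then be read off from the asymptotic structure of $H$: the sufficiency half will come, as in Theorem~\ref{Th1}, from invariant curves of the Poincar\'e map produced by Ortega's variant of Moser's small twist theorem, while the necessity half will come from exhibiting a monotone drift of the action.

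The new feature of Theorem~\ref{Th2} is that (\ref{LLc}) now holds with equality, so the ``Lazer--Leach term'' that generated the twist in Theorem~\ref{Th1} is tangent to the resonance and no longer furnishes twist by itself. The effective dynamics after one round of averaging is therefore driven by the primitive of $g(x)-g(\pm\infty)$, whose asymptotic profile is prescribed by (\ref{G+-}) to be $c_\pm(1+x^2)^{(1-d)/2}$ up to a remainder $G_\pm(x)$ satisfying $\lim_{|x|\to\pm\infty}x^{k-1+d}G_\pm^{(k)}(x)=0$ for $0<k\le\Upsilon_1+1$. I would compute the action--angle expression of this primitive, average over the fast angle, and verify that the averaged effective Hamiltonian has the form $\Phi(\rho)=c\,\rho^{(1-d)/2}+o(\rho^{(1-d)/2})$ with $c\neq 0$, all relevant derivatives being controlled by (\ref{G+-}). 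The threshold $\Upsilon_1>5+\max\{4,7d,28d-3\}$ is picked exactly to balance the loss of regularity incurred at the successive coordinate changes needed to reduce $H$ to a small--twist normal form.

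Sufficiency ($d<1$): differentiation of $\Phi$ yields a nondegenerate twist of order $\rho^{-(1+d)/2}$. After a few further normalising canonical transformations removing the remaining higher harmonics, Ortega's small twist theorem produces invariant curves of the Poincar\'e map at arbitrarily large amplitudes, and each solution trapped between two such curves is bounded. Necessity ($d>1$): the factor $c\,\rho^{(1-d)/2}\to 0$ as $\rho\to\infty$, and a refinement of the Alonso--Ortega construction then exhibits a solution whose action grows without bound, because the averaged right--hand side has a term of constant sign whose magnitude eventually dominates the bounded oscillatory remainder.

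The main obstacle is the careful asymptotic tracking of $H(\rho,\tau;\theta)$ through several averaging steps when the naive leading contribution cancels by (\ref{LL0}). One must verify that the $c_\pm(1+x^2)^{(1-d)/2}$ contribution actually survives averaging, identify the sign of the surviving coefficient, and retain enough smoothness in the coordinate changes to invoke Ortega's theorem at the end. This is the structural reason for the strong decay hypothesis (\ref{G+-}) and for the high regularity exponents $\Upsilon_1,\Upsilon_2$ assumed in the statement.
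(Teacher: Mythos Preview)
Your high--level outline (action--angle, exchange of $t$ and $\theta$, normal form, twist theorem for the bounded half, drift argument for the unbounded half) matches the paper's structure, but two key technical points are misidentified and would make the argument break down as written.

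\medskip
\noindent\textbf{Sufficiency ($d<1$).} You assert that after averaging over the fast angle the effective Hamiltonian is $\Phi(\rho)=c\rho^{(1-d)/2}+o(\rho^{(1-d)/2})$, a function of $\rho$ alone. This is not what happens. Under the critical condition (\ref{LL0}) the normal form retains its dependence on the slow variable $t$:
\[
\alpha(h,t)=-[f_1](h)-[f_2](h,t)=-a(t)\,h^{1/2}-\beta(h),\qquad a(t)=\tfrac{\sqrt{2}}{2\pi}n^{-3/2}A\bigl(1-\cos(nt+\xi)\bigr)\ge 0,
\]
where only $\beta(h)\sim h^{(1-d)/2}$ carries the twist while the larger term $a(t)h^{1/2}$ has vanishing second $h$--derivative and moreover \emph{touches zero} in $t$. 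So the twist is genuinely weaker than in Theorem~\ref{Th1} (lower bound $\gtrsim h^{(1-d)/2-k}$ for $\partial_h^k\alpha$, $k=0,1,2$, versus upper bound $\lesssim h^{1/2-k}$), and this mismatch is exactly why the perturbation must be killed to higher order, and why $\Upsilon_1,\Upsilon_2$ depend on $d$. The paper then exchanges the roles of time and angle a second time to pass to a superlinear picture and applies Moser's small twist theorem directly; Ortega's variant is deliberately avoided here (this is even flagged in the introduction), so invoking ``Ortega's theorem'' at the end is the wrong tool.

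\medskip
\noindent\textbf{Necessity ($d>1$).} Alonso--Ortega's unboundedness criterion needs the \emph{strict} inequality, which fails under (\ref{LL0}); a ``refinement'' along those lines does not work because there is no term of constant sign. The mechanism is more delicate: since $\beta(\rho)\to 0$, the leading Hamiltonian is $-a(\tau)\sqrt{\rho}+f_8(\tau)+O(\rho^{-\delta})$ with $a(\tau)$ vanishing at $\tau^*$. Near $\tau^*$ one has $a(\tau)\sim \tfrac{n^2}{2}(\tau-\tau^*)^2$, and the paper constructs, by a Bernoulli--type comparison for the phase flow, a trapping region of the form $D=\{\rho^{-5/12}\le \tau-\tau^*\le \rho^{-1/20}\}$ in which $d\rho/d\theta\ge c\,\rho^{1/12}$. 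This is a genuine phase--plane escape argument exploiting the degeneracy of $a$ at $\tau^*$; the ``averaged right--hand side has a term of constant sign'' picture does not capture it.
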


\begin{remark}\label{nonmonotone}
From the definition of $G_{\pm}(x)$ and (\ref{G+-}), it is easy to see that (\ref{not-monotone})  does {not} hold true.
 Thus there is no contradiction with the result from \cite{LL}.
\end{remark}

 \begin{example}
Let $g(x)=\arctan x+2x(1+x^2)^{-\frac 23}$ and $p(t)=2\cos (nt)$.
Then the sufficient condition in Theorem \ref{Th2} {for boundedness} are met with
$g(\pm\infty)=\pm\frac{\pi}{2},\ d=\frac 23,\ c_{+}=3,\ c_{-}=3$. On the other hand, if $p(t)$ is kept unchanged except
that $g(x)$ is replaced by $\arctan x+\frac{x}{1+x^2}+x(1+x^2)^{-\frac 32}$, with  $d=\frac{3}{2}$ and $c_{\pm}= 1$ which implies (\ref{semi}) has unbounded solutions by Theorem \ref{Th2}. One can check that {these two functions do}  not satisfy {(\ref{not-monotone})}.
\end{example}

%\begin{remark}
% {From Theorem \ref{Th2}, for the boundedness results, the smoothness conditions of $g(x)$ and $p(t)$ depend  %on the parameter $d(0<d<1)$ in (\ref{G+-}). For example, when $d=\frac{1}{2}$, then $\ \Upsilon_1=17,\
%\Upsilon_2=13.$ Denote $\Upsilon_1(d)$, $\Upsilon_2(d)$ be the smallest integers of $\Upsilon_1 $ and  $\Upsilon_2  $ %satisfying $\Upsilon_1>5+\max\{4,7d, 28d-3\}$,
%$ \Upsilon_2>1+\max\{4,7d, 28d-3\}$ respectively. Then $\Upsilon_1(d)\le31$, $\Upsilon_2(d)\le 27$ for $0<d<1$.}
% { Meanwhile, it is clear that we do not need  such a high smoothness condition to obtain the unbounded results %for $d>1$.} { In fact $\Upsilon_1=3,\ \Upsilon_2=4$ are sufficient for the existence of unbounded solutions, %see Subsection \ref{s6.2}.
%}
%\end{remark}

 \begin{remark}\label{more}
We can prove a similar result as the sufficient part in Theorem \ref{Th2} even if $\psi'(x)\not=0$ in the equation (\ref{01-1}). The proof is just a combination of the proof of Theorem \ref{Th1} and the one of Theorem \ref{Th2}.
%On the other hand, with the same method to prove the ``unstable part" of Theorem \ref{Th2}, we can also prove the existence of unbounded solutions for the equation (\ref{semi}) with the conditions (\ref{L-L}) and (\ref{monotone}).

 \end{remark}

 There are some new ingredients in our proof.

{Instead of applying Ortega's small twist theorem, we use a rotation transformation to deal with resonance (see also \cite{Xing}). With such a transformation, the linear term disappears in the new Hamiltonian (and a sublinear one is obtained), and one will not meet the difficulty of resonance any more. Then Moser's small twist theorem is directly applicable for the case $\psi(x)=0$, see \cite{Xing2012} and the proof of Theorem \ref{Th2}.}

{For the case $\psi(x)\not=0$, however, the rotation transformation is not sufficient for the study of boundedness. The reason lies in the fact that}  the potential in our equation does not {satisfy} the well-known polynomial growth condition due to the oscillating property of the function $\psi(x)$.  We say a bounded function $g(x,t)$ satisfies the polynomial growth condition with respect to $x$ if
  \begin{equation}\label{growthderivative}
 \lim_{|x|\rightarrow +\infty}x^mD_{x}^{m}g(x,t)=0
  \end{equation}
for some $m>0$.
 In most papers stated above,
the condition (\ref{growthderivative}) is required. Without the
polynomial growth condition, the estimates on the derivatives of
the perturbations are very poor. For this reason,
the perturbation in the sublinear system can not be reduced to be small enough in $C^4$-topology by
only repeated applications of the common method of generating function. Our observation is that although with the method of generating function the smoothness of the Hamiltonian on some variables (say, time variable) become worse, the one on some other variables (say, angle variable) become better and thus the poor
estimates on the corresponding variables can be improved, see Subsection \ref{4.3}. Starting from this key
observation, we can find further canonical transformations to
obtain a nearly integrable superlinear system and thus Moser's
theorem is available. It is worthy to note that the periodic assumption on $\psi(x)$ is not necessary. In fact we can  {show} the {boundedness holds} when $\psi(x)=\phi(x^{1+\delta})$ with $\phi(x)$ periodic and $\delta>0$ small enough. Moreover, $\psi(x)$ can be replaced by a function $\Psi(x,t)$ which is periodic on both $x$ and $t$, see \cite{Xing}. Thus we show that
 the classical polynomial growth conditions can be considerably weakened. For more references, one can see  \cite{lev1}, \cite{Wangy}.

 {It is well known that a sublinear system can be further changed into a superlinear one with the trick of exchanging the roles of time and angle {variables} (see \cite{Arn,lev1} for example). Thus  we conclude that for the boundedness of Duffing equations, there is no essential difference among semilinear, sublinear and superlinear cases.}
 %Furthermore, with a sublinear or superlinear Hamiltonian, it is convenient to make further transformations to kill the perturbation if it is necessary. In comparison, with a semilinear Hamiltonian at resonance, it is impossible to solve the homological equation.

% Furthermore, starting from the result in this paper, we will show in another paper that if $\omega$ is a Liouville irrational %number, then boundedness holds true for the following equation
 %\begin{equation}\label{AO1}
%\ddot{x}+\omega^2x+g(x)+\psi'(x)=p(t) \end{equation}
 %  with $g, \psi, p$ defined as in Theorem \ref{Th1}. We remark that for the case $\psi(x)=0$, the boundedness of the above equation can be obtained with the method of truncation introduced by Ortega. But it seems that this method cannot be applied in the case $\psi(x)\not=0$.
   %   When $\omega$ is a non-Liouville irrational number, i.e., it satisfies the Diophantine condition, the boundedness of (\ref{AO1}) has been obtained by \cite{JPW}. Thus in conclusion, we can see that the arithmetic property of $\omega$ has no relationship with the boundedness of (\ref{AO1}).

 %For more results on (\ref{semi}), one can see \cite{Alo02,Mawhin} and references therein.
\vskip0.4cm
The paper is organized as follows.  The part from Section 2 to Section 5 is devoted to the proof of Theorem \ref{Th1}. In Section \ref{s2}, we state
some preliminary estimates. In Section \ref{s3}, we introduce a rotation transformation and then make canonical
transformations such that all non-oscillating terms  are
transformed into normal form possessing desirable properties. The
main difficulty in this paper lies in how to deal with oscillating
terms { caused by $\psi(x)$}. For this purpose, in Section \ref{s4} we make canonical transformations to
improve estimates on the derivatives of oscillating terms and subsequently change the system into a nearly integrable one.
Thus Theorem \ref{Th1} is proved by Moser's twist theorem in
Section \ref{s5}. {The sketch for the proof of Theorem \ref{Th2} is given in Section \ref{s6}.} The proof of some lemmas can be found in the Appendix.
%, subsection \ref{s7.1} gives
%several technical lemmas and theorems which are important. In the
%end of this paper, several proofs of lemmas of
% the paper are presented. Although these proofs and estimates are lengthy,
 %it is important to give the details since they are crucial
%in the process to fit the problems into the framework of KAM
%theory.

%Throughout this paper, we denote $0<c<1<C$ two universal constants without concerning their quantities.

\renewcommand{\baselinestretch}{1.1}\baselineskip 12pt

\section{Action-angle coordinates}\label{s2}
Consider the original system (\ref{01-1}).
 Let $y=\dot{x}/n$, equation (\ref{01-1}) is equivalent to a Hamiltonian system with   Hamiltonian
 \begin{equation}\label{H01}
    H(x,y,t)=\frac{1}{2}n(x^2+y^2)+\frac{1}{n}G(x)-\frac{1}{n}x
    p(t)+\frac{1}{n}\psi(x),
 \end{equation}
where $G(x)=\int_0^xg(s)ds.$

 Under the action-angle coordinates transformation($dx\wedge
dy=dI\wedge d\theta$)
  \begin{equation}\label{}\left\{ \begin{array}{l}
 \nonumber    x=x(I,\theta)=\sqrt{\frac{2}{n}}I^{\frac{1}{2}}\cos{n\theta}\\
  \nonumber     y=y(I,\theta)=\sqrt{\frac{2}{n}}I^{\frac{1}{2}}\sin{n\theta}
     \end{array}\right.,\ (I,\theta)\in\mathbb{R}^+\times
     {\mathbb{S}}^1,\ \mbox{with}\ {\mathbb{S}}^1=\mathbb{R}/(2\pi\mathbb{Z}),
      \end{equation}
  (\ref{H01}) is transformed into
 \begin{equation}\label{H02}
    H(I,\theta,t)=I+\frac{1}{n}G(x)-\frac{1}{n}x p(t)+\frac{1}{n}\psi(x),
 \end{equation}
 \ where\ $x=x(I,\theta)=\sqrt{\frac{2}{n}}I^{\frac{1}{2}}\cos{n\theta}$ for simplicity.

 Denote
$f_1(I,\theta)=\frac{1}{n}G(\sqrt{\frac{2}{n}}I^{\frac{1}{2}}\cos{n\theta}),\
f_2(I,\theta,t)=-\frac{1}{n}\sqrt{\frac{2}{n}}I^{\frac{1}{2}}\cos{n\theta}
p(t)$,\ and
$f_3(I,\theta)=\frac{1}{n}\psi(\sqrt{\frac{2}{n}}I^{\frac{1}{2}}\cos{n\theta})$,
then  (\ref{H02}) is rewritten by
  \begin{equation}\label{H03}
   \nonumber    H(I,\theta,t)=I+f_1(I,\theta)+f_2(I,\theta,t)+f_3(I,\theta).
 \end{equation}

\vskip0.4cm

In the context, we denote
$\displaystyle[f](\cdot)=\frac{1}{2\pi}\int_0^{2\pi}f(\cdot,\theta)d\theta$
be the average function of
 $f(\cdot,\theta)$ with respect to $\theta$. Without loss of generality, $C>1,\ c<1$ are two
 universal positive constants not concerning their quantities, and $j,k,l,\nu,\kappa$, etc., are non-negative integers.

 Next, we give several lemmas about the estimates on $f_1(I,\theta),\ f_2(I,\theta,t)$  and
 $f_3(I,\theta)$ which are similar to those in \cite{Liu01,Xing}.
 \begin{lemma}\label{L01}
 For $I$ large enough, $\theta\in \mathbb{S}^1$, $ k+j\leq\Upsilon_1+1,$ we have the estimates on $f_1(I,\theta)$ as following:
 $$cI^{\frac{1}{2}}\leq\Big|f_1(I,\theta)\Big|\leq CI^{\frac{1}{2}},\ \ \Big|\partial^k_I\partial^j_\theta f_1(I,\theta)\Big|\leq CI^{\frac{1}{2}-k+\frac{1}{2}(max\{1,j\}-1)};$$
 $$cI^{\frac{1}{2}}\leq \Big|[f_1](I)\Big|\leq CI^{\frac{1}{2}},\ \ cI^{\frac{1}{2}-k}\leq \Big| {[f_1]^{(k)}(I)}\Big|\leq CI^{\frac{1}{2}-k}.$$
  \end{lemma}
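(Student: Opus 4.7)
The plan is to reduce everything to a careful scale count on the composition $f_1(I,\theta)=\tfrac{1}{n}G(u)$ with $u=u(I,\theta)=\sqrt{2/n}\,I^{1/2}\cos n\theta$. Condition $(A_1)$ provides two inputs I would use repeatedly: (i) the uniform boundedness of $g$ and of its derivatives up to order $\Upsilon_1$ (a consequence of $x^kg^{(k)}(x)\to 0$ together with continuity on compacts), and (ii) the sharper decay $|g^{(m)}(x)|\le C|x|^{-m}$ for $|x|\ge 1$, in fact with the $o$-improvement $|x|^{m}|g^{(m)}(x)|\to 0$. The upper bound $|f_1(I,\theta)|\le CI^{1/2}$ is then immediate: (i) gives $|G(x)|\le C|x|$, hence $|f_1|\le \tfrac{C}{n}|u|\le CI^{1/2}$.

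For the derivative estimates I would expand $\partial_I^{k}\partial_\theta^{j}f_1$ via Fa\`a di Bruno's formula into a finite sum of terms $g^{(m)}(u)\prod_{i=1}^{m}(\partial_I^{a_i}\partial_\theta^{b_i}u)$ with $\sum a_i=k$ and $\sum b_i=j$. The partial derivatives of $u$ obey the explicit scaling $|\partial_I^{a}\partial_\theta^{b}u|\le CI^{1/2-a}$, with a trigonometric factor proportional to $\cos n\theta$ when $b$ is even and to $\sin n\theta$ when $b$ is odd. The uniform bound (i) alone already yields $|\partial_\theta^{j}f_1|\le CI^{j/2}$, which matches the claim in the case $k=0$. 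The pure $\partial_I$-tower for $k\ge 2$ requires (ii): on the bulk region $|u|\ge 1$ I apply $|g^{(m)}(u)|\le C|u|^{-m}=CI^{-m/2}|\cos n\theta|^{-m}$, while on the thin band $|\cos n\theta|\le I^{-1/2}$ I fall back on $|g^{(m)}|\le C$; in both regions the $|\cos n\theta|^{m}$ factors coming from the $\partial_I u$'s cancel out and produce a common bound $CI^{-m}$, which is dominated by the principal contribution $g'(u)\,\partial_I^{k}u=O(I^{1/2-k})$. The same two-region interpolation, applied to the mixed Fa\`a di Bruno terms, yields the full estimate $|\partial_I^{k}\partial_\theta^{j}f_1|\le CI^{1/2-k+(\max\{1,j\}-1)/2}$.

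For the averaged quantities I would change variables $\phi=n\theta$ to rewrite
\[
[f_1](I)=\frac{1}{2\pi n}\int_{0}^{2\pi} G\bigl(\sqrt{2/n}\,I^{1/2}\cos\phi\bigr)\,d\phi.
\]
The elementary asymptotic $G(x)/x\to g(\pm\infty)$ as $x\to\pm\infty$, combined with dominated convergence, gives
\[
\lim_{I\to\infty}I^{-1/2}[f_1](I)=\frac{\sqrt{2/n}}{\pi n}\bigl(g(+\infty)-g(-\infty)\bigr),
\]
and the right-hand side is nonzero because the Lazer--Leach hypothesis (\ref{LLc}) forces $g(+\infty)\ne g(-\infty)$; this delivers the two-sided bound $cI^{1/2}\le |[f_1](I)|\le CI^{1/2}$ for $I$ large. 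For $[f_1]^{(k)}(I)$ I differentiate under the integral by Fa\`a di Bruno: the leading term is $r^{(k)}(I)\int_{0}^{2\pi}g(r\cos\phi)\cos\phi\,d\phi$, and since $r(I)=\sqrt{2/n}\,I^{1/2}$ satisfies $r^{(k)}(I)\sim I^{1/2-k}$ while the remaining Bell-polynomial contributions, each containing a factor $g^{(m-1)}(r\cos\phi)$ for some $m\ge 2$, are $o(I^{1/2-k})$ thanks to the $o$-improvement in (ii), the dominant part is a nonzero multiple of $(g(+\infty)-g(-\infty))\,I^{1/2-k}$, yielding $cI^{1/2-k}\le |[f_1]^{(k)}(I)|\le CI^{1/2-k}$.

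The main technical obstacle I anticipate is the book-keeping of the Fa\`a di Bruno expansion combined with the bulk/thin-band dichotomy in $\theta$: only by interpolating between the growth condition (ii) and the uniform bound (i) does one obtain the sharp scaling in the $I$-direction, and for the average one additionally needs the $o$-improvement in (ii) (rather than plain boundedness) to kill the a priori larger subleading Bell-polynomial contributions so that the $m=1$ term sets the true size.
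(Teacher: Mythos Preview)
Your approach is correct and is precisely the kind of ``direct calculation'' the paper has in mind: the paper does not actually prove this lemma but calls the estimates classic and defers to \cite{Liu01}, so there is no in-paper argument to compare against. Your Fa\`a di Bruno expansion together with the bulk/thin-band splitting in $\theta$ (using $|g^{(m)}(x)|=o(|x|^{-m})$ on $|u|\ge 1$ and uniform boundedness on $|u|\le 1$) is the standard mechanism, and your extraction of the two-sided bound for $[f_1]^{(k)}$ by isolating the $m=1$ Bell-polynomial contribution via the $o$-improvement in $(A_1)$ is exactly right. One small remark: the pointwise lower bound $cI^{1/2}\le|f_1(I,\theta)|$ as literally written cannot hold uniformly in $\theta$ (take $\cos n\theta=0$, where $f_1=\tfrac{1}{n}G(0)$); you were right not to attempt it, and the paper never uses it---only the lower bound on the average $[f_1]$ matters downstream.
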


  \begin{lemma}\label{L02}
It holds that
 {$$\displaystyle\lim_{I\rightarrow+\infty}I^{-\frac{1}{2}}\cdot[f_1](I)=\frac{\sqrt{2}}{\pi}n^{-\frac{3}{2}}\big(g(+\infty)-g(-\infty)\big),$$
 $$\lim_{I\rightarrow+\infty}I^{\frac{1}{2}} \cdot [f_1]'(I)=\frac{\sqrt{2}}{2\pi}n^{-\frac{3}{2}}\big(g(+\infty)-g(-\infty)\big),$$
  $$\lim_{I\rightarrow+\infty}I^{\frac{3}{2}} \cdot [f_1]''(I)=-\frac{\sqrt{2}}{4\pi}n^{-\frac{3}{2}}\big(g(+\infty)-g(-\infty)\big).$$}
  \end{lemma}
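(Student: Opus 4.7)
The plan is to reduce each of the three limits to a single application of the dominated convergence theorem on an explicit integral over one period. Setting $a=a(I):=\sqrt{2/n}\,I^{1/2}$ so that $x(I,\theta)=a\cos n\theta$, and using the substitution $\phi=n\theta$ together with the $2\pi$-periodicity of the integrand, one first rewrites
\[
[f_1](I)=\frac{1}{2\pi n}\int_0^{2\pi} G(a\cos\phi)\,d\phi.
\]
Differentiating under the integral sign and using $da/dI=(2n)^{-1/2}I^{-1/2}$, one obtains closed integral expressions for $[f_1]'(I)$ and $[f_1]''(I)$. After multiplying by $I^{1/2}$ and $I^{3/2}$ respectively, all the remaining $I$-dependence collapses into the single scalar $a$ sitting inside the integrand, up to a harmless numerical prefactor.

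Next I would pass to the limit $a\to\infty$ using three ingredients. First, $G(x)/x\to g(\pm\infty)$ as $x\to\pm\infty$, so $G(a\cos\phi)/a$ converges pointwise (a.e.\ in $\phi$) with uniform dominator $\|g\|_\infty|\cos\phi|$. Second, $g(a\cos\phi)\to g(\pm\infty)$ pointwise on $\{\pm\cos\phi>0\}$, dominated by $\|g\|_\infty$. Third, $(a\cos\phi)g'(a\cos\phi)\to 0$ for $\cos\phi\neq 0$ by the $k=1$ instance of $(A_1)$, and is uniformly bounded because $xg'(x)$ is continuous and vanishes at infinity. Dominated convergence then reduces the three limits to combinations of the elementary integrals $\int_{\{\cos\phi>0\}}\cos\phi\,d\phi=2$ and $\int_{\{\cos\phi<0\}}\cos\phi\,d\phi=-2$, yielding the limiting values $2(g(+\infty)-g(-\infty))$, $2(g(+\infty)-g(-\infty))$, and $0$ respectively for the integrals coming from $[f_1]$, $[f_1]'$ and $[f_1]''$.

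A short arithmetic check of the prefactors recovers exactly the three constants $\frac{\sqrt{2}}{\pi}n^{-3/2}$, $\frac{\sqrt{2}}{2\pi}n^{-3/2}$ and $-\frac{\sqrt{2}}{4\pi}n^{-3/2}$. The only point requiring any attention is the second derivative: the Leibniz expansion of $[f_1]''(I)$ produces an extra term proportional to $\int_0^{2\pi}(a\cos\phi)g'(a\cos\phi)\cos\phi\,d\phi$, and it is essential that this term vanish in the limit so as not to spoil the final formula. This is precisely where the $k=1$ polynomial decay in $(A_1)$ enters. Apart from this single bookkeeping point, no genuine obstacle is expected.
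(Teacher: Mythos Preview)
Your proposal is correct and is precisely the kind of direct computation the paper has in mind; the paper omits the proof entirely, merely calling the result ``classic'' and referring to Liu \cite{Liu01}. Your write-up, including the identification of the term $\int_0^{2\pi}(a\cos\phi)g'(a\cos\phi)\cos\phi\,d\phi$ that must vanish via the $k=1$ decay in $(A_1)$, fills in exactly the details the paper suppresses.
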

\begin{remark}
 The estimates about Lemmas \ref{L01} and \ref{L02} are classic and can be obtained
 by {  direct calculations}. Thus we omit it. Readers can refer to
 \cite{Liu01}.
 \end{remark}
 Direct computations can lead to the following conclusions:
    \begin{lemma}\label{L03}%$[f_2]=0$, and f
    For $I$ large enough, $\theta,\ t\in \mathbb{S}^1$, $ k+j\leq\Upsilon_1+1$ and $ l\leq {\Upsilon_2}$, we have the estimates on $f_2(I,\theta,t)$ as following:
 $$\Big|\partial^k_I\partial^j_\theta\partial^l_t f_2(I,\theta,t)\Big|\leq CI^{\frac{1}{2}-k}.$$
   \end{lemma}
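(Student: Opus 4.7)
The plan is to exploit the complete separation of variables in $f_2$. Since
$$f_2(I,\theta,t) = -\frac{1}{n}\sqrt{\frac{2}{n}}\, I^{1/2}\,\cos(n\theta)\, p(t),$$
every mixed partial derivative factors as
$$\partial^k_I\partial^j_\theta\partial^l_t f_2(I,\theta,t) = -\frac{1}{n}\sqrt{\frac{2}{n}}\,\bigl(\partial^k_I I^{1/2}\bigr)\bigl(\partial^j_\theta \cos(n\theta)\bigr)\, p^{(l)}(t),$$
so I would simply bound each of the three factors separately and multiply the estimates.

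For the $I$-factor, $\partial^k_I I^{1/2} = \binom{1/2}{k} k!\, I^{1/2-k}$, which for $I$ large enough is bounded in modulus by $C\, I^{1/2-k}$. For the $\theta$-factor, $|\partial^j_\theta \cos(n\theta)| \le n^j$ uniformly in $\theta$; since the hypothesis restricts $j \le \Upsilon_1 + 1$, this is at most the constant $n^{\Upsilon_1+1}$. For the $t$-factor, the restriction $l \le \Upsilon_2$ together with $p \in C^{\Upsilon_2}(\mathbb{R}/2\pi\mathbb{Z})$ gives $|p^{(l)}(t)| \le \|p\|_{C^{\Upsilon_2}}$, uniformly in $t$. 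Multiplying these three bounds yields $|\partial^k_I\partial^j_\theta\partial^l_t f_2| \le C\, I^{1/2-k}$, with $C$ absorbing $n^{\Upsilon_1+1}$, $\|p\|_{C^{\Upsilon_2}}$ and the combinatorial constant from the $I$-derivatives, and in particular independent of $I$, $\theta$, $t$ and of the particular admissible triple $(k,j,l)$.

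There is no real obstacle here: the lemma is a direct computation enabled by the fact that the three variables in $f_2$ sit in genuinely separated factors and that the smoothness of $p$ is exactly what is needed to control $\partial^l_t$. The conceptual point to record is that the $\theta$- and $t$-derivatives contribute only bounded factors, so the entire $I$-growth is carried by $\partial^k_I I^{1/2}$, producing the exponent $\tfrac{1}{2}-k$ in the final bound. This matches the shape of the estimates for $f_1$ in Lemma \ref{L01} and will be used in the canonical-transformation arguments of Sections \ref{s3}--\ref{s4}.
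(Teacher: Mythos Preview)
Your proof is correct and matches the paper's approach: the paper simply states that ``direct computations can lead to the following conclusions'' before Lemma~\ref{L03}, and your factorization into $\partial^k_I I^{1/2}$, $\partial^j_\theta\cos(n\theta)$, and $p^{(l)}(t)$ is exactly that direct computation spelled out.
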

%\begin{proof}
 %Note that $f_2(I,\theta,t)=-\frac{1}{n}\sqrt{\frac{2}{n}}I^{\frac{1}{2}}\cos{n\theta}
%p(t)$, the results are obtained
 %by a direct computation.$\square$\end{proof}
     \begin{lemma}\label{L04}
 For $I$ large enough, $\theta\in \mathbb{S}^1$, $ k+j\leq\Upsilon_1+1,$  we have the estimates on $f_3(I,\theta)$ as following:
 \begin{equation}\label{f3001}
  \nonumber \Big|\partial^k_I\partial^j_\theta f_3(I,\theta)\Big|\leq
CI^{-\frac{k}{2}+\frac{j}{2}}.
 \end{equation}
 %\begin{equation}\label{f3002}
%\Big|[f_3]\Big|=\Bigg|\frac{1}{n}\int_0^{2\pi}\psi(\sqrt{\frac{2}{n}}I^{\frac{1}{2}}\cos{n\theta})d\theta\Bigg|\leq
%CI^{-\frac{1}{4}},
% \end{equation}
% and
% \begin{equation}\label{f3002}
%\Big|\frac{d^k}{dI^k}[f_3]\Big|\leq CI^{-\frac{1}{4}-\frac{k}{2}}.
% \end{equation}
    \end{lemma}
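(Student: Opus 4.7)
The plan is to prove Lemma \ref{L04} by a direct application of the Faà di Bruno chain rule, exploiting two key facts: first, the periodicity of $\psi$ gives uniform bounds on all its derivatives up to order $\Upsilon_1$; second, the derivatives of the angle-action substitution $x(I,\theta)=\sqrt{2/n}\,I^{1/2}\cos n\theta$ scale very asymmetrically in $I$ versus $\theta$.

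First I would record the scaling of $x(I,\theta)$. Since $\theta$-derivatives act only on the bounded factor $\cos n\theta$, while each $I$-derivative (beyond the zeroth) pulls out a factor $I^{-1}$, a short calculation shows that for every $k',j'\geq 0$ and $I$ large,
\[
\bigl|\partial_I^{k'}\partial_\theta^{j'}x(I,\theta)\bigr|\leq C\,I^{1/2-k'},\qquad \theta\in\mathbb{S}^1,
\]
uniformly in $\theta$ and independently of $j'$. Meanwhile, since $\psi\in C^{\Upsilon_1}(\mathbb{R}/T\mathbb{Z})$ is periodic, its derivatives $\psi^{(m)}$ are globally bounded on $\mathbb{R}$ for $0\leq m\leq \Upsilon_1$, so $|\psi^{(m)}(x)|\leq C$ for every $x$.

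Next, I would write down the Faà di Bruno expansion of the composition $\psi(x(I,\theta))$. Namely, $\partial_I^{k}\partial_\theta^{j}f_3$ equals a finite sum of terms of the form
\[
\frac{c_\alpha}{n}\,\psi^{(m)}\bigl(x(I,\theta)\bigr)\,\prod_{i=1}^{m}\partial_I^{k_i}\partial_\theta^{j_i}x(I,\theta),
\]
where $1\leq m\leq k+j$, the multi-indices satisfy $\sum_i k_i=k$, $\sum_i j_i=j$, and $k_i+j_i\geq 1$ for each $i$. Applying the two bounds above term by term yields
\[
\Bigl|\psi^{(m)}(x)\prod_{i=1}^{m}\partial_I^{k_i}\partial_\theta^{j_i}x\Bigr|\leq C\prod_{i=1}^{m}I^{1/2-k_i}=C\,I^{m/2-k}.
\]
Since $m\leq k+j$, the exponent $m/2-k$ is maximized at $m=k+j$, giving the claimed estimate $C\,I^{-k/2+j/2}$.

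There is no substantive obstacle; the only thing to watch is the combinatorial bookkeeping in Faà di Bruno and the verification that $\theta$-derivatives really do not worsen the $I$-scaling. This last point is the crux and is what makes the oscillating term behave so differently from the non-oscillating pieces $f_1,f_2$ treated in Lemmas \ref{L01}--\ref{L03}: $\theta$-differentiation actually improves the estimate by a factor $I^{1/2}$, while $I$-differentiation costs a factor $I^{-1/2}$, and it is precisely this asymmetric scaling that drives the subsequent canonical transformations in Section \ref{s4}.
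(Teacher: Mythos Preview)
Your argument is correct and is precisely the direct computation the paper has in mind (the paper merely states ``direct computations can lead to the following conclusions'' before Lemmas~\ref{L03}--\ref{L04}): Fa\`a di Bruno applied to $\psi\circ x$, combined with the scaling $|\partial_I^{k'}\partial_\theta^{j'}x|\le C I^{1/2-k'}$ and the uniform boundedness of $\psi^{(m)}$ by periodicity, gives exactly the exponent $m/2-k\le -k/2+j/2$. One minor bookkeeping point: since the hypothesis is $\psi'\in C^{\Upsilon_1}(\mathbb{R}/T\mathbb{Z})$, you actually have $\psi\in C^{\Upsilon_1+1}$, which is needed to cover the top case $m=k+j=\Upsilon_1+1$; your stated range $0\le m\le \Upsilon_1$ should read $0\le m\le \Upsilon_1+1$.
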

    %\begin{proof}\quad ??? $\square$
    %\end{proof}

    \vskip 0.3cm
% \begin{remark}
 % For Lemma \ref{L04}, it shows the properties of the oscillating term $\psi(x)$.
 % (\ref{f3001}) is a straight result, while (\ref{f3002}) is based on C. Pan's method(cf. Theorem \ref{A03} in the Appendix) and the proof is more complicated.
 % For the reason that we will prove a
 %general case later, see lemma \ref{L10}, the proof is omitted here.
 %\end{remark}

%\subsection{A canonical transformation}\label{s2.2}
%The proof of Lagrangian stability is based on Moser's theorem, cf.
%\cite{Mos1,Rus}. Note that the poor smoothness condition in
%$\theta$, cf. lemma \ref{L01}, \ref{L04}, we have to exchange the
%roles of $(h,t)$ and $(I,\theta)$ to find the way to use Moser's
%theorem.

Since $\partial_I H>\frac{1}{2}$ when $I$ is
sufficiently large, we can solve $H(I,\theta,t)=h$ for $I$ as
following:
 \begin{equation}\label{I00}
   I= I(h,t,\theta)=h-R(h,t,\theta),
 \end{equation}
 where $R(h,t,\theta)$ is determined implicitly by the equation
 \begin{equation}\label{R00}
    R=f_1(h-R,\theta)+f_2(h-R,\theta,t)+f_3(h-R,\theta).
 \end{equation}
It is clear that $h\rightarrow+\infty$ if and only if $I\rightarrow+\infty$.
 { Meanwhile}, it is well known that the new Hamiltonian system
 \begin{equation}\label{I00-1}\left\{ \begin{array}{l}
  \nonumber \displaystyle  \frac{dt}{d\theta}=-\partial_{h}I(h,t,\theta),\\
   \\
   \nonumber    \displaystyle \frac{d h}{d\theta}=\partial_{ t}I(h,t,\theta)
     \end{array}\right.
      \end{equation}
       is equivalent to the original one, see \cite{Arn,lev1,Liu01,Xing}, etc.

      We present some estimates on $R(h,t,\theta)$ in (\ref{I00}).
\begin{lemma}\label{L05}
 For $h$ large enough, $\theta,\ t\in \mathbb{S}^1$, $ k+j\leq\Upsilon_1+1$ and $ l\leq {\Upsilon_2}$, we have the estimates on $R(h,t,\theta)$ as following:
 %$$|R|\leq Ch^{\frac{l}{2}},$$
 %$$|\partial_\theta R|\leq Ch^{\frac{l}{2}},\ |\partial^2_\theta R|\leq Ch^{1},$$
% and
 $$\Big|\partial^k_h\partial^l_t\partial^j_\theta  R\Big|\leq Ch^{\frac{1}{2}-\frac{k}{2}+\frac{1}{2}(max\{1,j\}-1)}.$$
   \end{lemma}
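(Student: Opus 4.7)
My plan is to derive the stated bounds on $R$ directly from the implicit equation~(\ref{R00}), combining Lemmas~\ref{L01}, \ref{L03}, and~\ref{L04} with an induction on the total order $n=k+l+j$.

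First I would establish the zeroth-order estimate $|R|\le Ch^{1/2}$. Writing $F=f_1+f_2+f_3$, Lemmas~\ref{L01} and~\ref{L03} give $|f_1|+|f_2|\le C(h-R)^{1/2}$ while $|f_3|\le C$ by Lemma~\ref{L04}, so~(\ref{R00}) reads $R=F(h-R,\theta,t)$. Since $|\partial_I F|\to 0$ as $h\to\infty$, the right-hand side is a contraction in $R$, which yields existence, uniqueness and $|R|\le Ch^{1/2}$. In particular $h-R\sim h$ for $h$ large, so every $f_i$-estimate evaluated at $I=h-R$ holds with the same $h$-exponent as at $I=h$. Implicit differentiation of $R=F(h-R,\theta,t)$ then produces
\[
\partial_h R=\frac{\partial_I F}{1+\partial_I F},\qquad
\partial_t R=\frac{\partial_t f_2}{1+\partial_I F},\qquad
\partial_\theta R=\frac{\partial_\theta F}{1+\partial_I F}.
\]
Combined with $\partial_I F=O(h^{-1/2})$, $|\partial_t f_2|\le Ch^{1/2}$ and $|\partial_\theta F|\le Ch^{1/2}$, this recovers the base cases of the claim and shows that the scalar factor $1+\partial_I F$ will remain bounded away from zero at every subsequent stage.

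For the inductive step I would differentiate $R=F(h-R,\theta,t)$ a total of $n$ times via Fa\`a di Bruno's formula. The resulting identity expresses $\partial_h^k\partial_t^l\partial_\theta^j R$, multiplied by the nonvanishing factor $1+\partial_I F$, as a polynomial in strictly-lower-order derivatives of $R$ whose coefficients are mixed partial derivatives of $F$ at $(h-R,\theta,t)$. Invoking the inductive hypothesis on $R$ together with the $f_i$-estimates, a term-by-term count of the exponents of $h$ yields the claimed bound $h^{1/2-k/2+(\max\{1,j\}-1)/2}$. The appearance of $\max\{1,j\}$ rather than $j$ reflects the fact that $R$ itself is already of order $h^{1/2}$, so that the first $\theta$-derivative merely rearranges this factor while each subsequent $\theta$-derivative contributes a genuine $h^{1/2}$, whereas each $h$-derivative costs at least $h^{-1/2}$ and $t$-derivatives are $O(1)$ in $h$.

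The main obstacle is the combinatorial bookkeeping in the Fa\`a di Bruno expansion: one must check that among all possible partitions the worst-case monomial exactly matches the stated exponent, in particular that no term concentrating $\theta$-derivatives on a single $F$-factor exceeds the bound. The oscillating term $f_3$ satisfies a sharper bound than $f_1$ by Lemma~\ref{L04}, so it contributes only lower-order corrections and does not alter the final exponent; an essentially identical induction appears in~\cite{Liu01,Xing} and can be transcribed with the present $f_i$-estimates.
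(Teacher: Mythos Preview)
Your approach is essentially the paper's own: implicit differentiation of $R=F(h-R,\theta,t)$ with $F=f_1+f_2+f_3$, separating the highest-order derivative of $R$ via the factor $1+\partial_I F$ (the paper calls it $\Delta$ and packages the right-hand sides as auxiliary functions $g_1,g_2,g_3$), then inducting on the total order. The Fa\`a di Bruno language is just a rephrasing of the same bookkeeping.

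One correction to your exposition: it is \emph{not} true that $f_3$ ``satisfies a sharper bound than $f_1$'' and ``contributes only lower-order corrections.'' Compare Lemma~\ref{L04}, $|\partial_I^k f_3|\le CI^{-k/2}$, with Lemma~\ref{L01}, $|\partial_I^k f_1|\le CI^{1/2-k}$: for $k\ge 2$ the $f_3$-bound is \emph{worse}. In fact the exponent $-k/2$ in the target bound for $R$ comes precisely from $f_3$; without the oscillating term one would obtain $-k$, as in~\cite{Liu01}. So $f_3$ is what sets the final exponent, not a lower-order correction. This does not break your argument---if you actually feed the Lemma~\ref{L04} bounds into the induction you will recover the stated estimate---but you should revise that sentence so the explanation matches the computation.
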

   The proof is given in the Appendix.
     \vskip0.4cm
 {Moreover, from { the identity} (\ref{R00})}, $R$ has the following form  by Taylor's
 {formula}:
 \begin{eqnarray}\label{R01}
   \nonumber R&=&f_1(h,\theta)+f_2(h,t,\theta)+f_3(h-R,\theta)\\
    \nonumber &&-\int_0^1\partial_If_1(h-\mu R,\theta) Rd\mu-\int_0^1\partial_If_2(h-\mu R,\theta,t) Rd\mu\\
   \nonumber &=&f_1(h,\theta)+f_2(h,\theta,t)+f_3(h-R,\theta)-\partial_If_1(h,\theta) R-\partial_If_2(h,\theta,t) R\\
              &&+\int_0^1\int_0^1\partial^2_If_1(h-s\mu R,\theta)\mu R^2dsd\mu+\int_0^1\int_0^1\partial^2_If_2(h-s\mu R,\theta,t)\mu
              R^2dsd\mu.
 \end{eqnarray}
 (\ref{R01}) yields that
  \begin{equation}\label{R03}
    R=f_1(h,\theta)+f_2(h,t,\theta)+\frac{1}{n}\psi(x)-R_{01}(h,t,\theta)-R_{02}(h,t,\theta),
 \end{equation}
 where
  \begin{equation}\label{f3}
   \nonumber   \frac{1}{n}\psi(x)=f_3(h-R,\theta),
 \end{equation}
 \begin{equation}\label{R001}
   \nonumber    R_{01}(h,t,\theta)=(\partial_If_1(h,\theta)+\partial_If_2(h,\theta,t))(f_1(h,\theta)+f_2(h,\theta,t)),
 \end{equation}
 and
 \begin{eqnarray}\label{R002}
   \nonumber R_{02}(h,t,\theta)&=&
  (\partial_If_1(h,\theta)+\partial_If_2(h,\theta,t))f_3(h-R,\theta)
              \\
   \nonumber &&+(\partial_If_1(h,\theta)+\partial_If_2(h,\theta,t))\left(     \int_0^1\partial_If_1(h-\mu R,\theta) Rd\mu+\int_0^1\partial_If_2(h-\mu R,\theta,t) Rd\mu\right) \\
   \nonumber              &&-\int_0^1\int_0^1\partial^2_If_1(h-s\mu R,\theta)\mu R^2dsd\mu-\int_0^1\int_0^1\partial^2_If_2(h-s\mu R,\theta,t)\mu R^2dsd\mu.
 \end{eqnarray}
 \begin{remark}
    In the above, we regard $-\frac{1}{n}\psi(x)$ as a composite function of new variables and we postpone the treatment of it in Section \ref{s4}.
       \end{remark}

Therefore, the   Hamiltonian is
  \begin{equation}\label{I00-1}
   \nonumber    I=h-f_1(h,\theta)-f_2(h,t,\theta)-\frac{1}{n}\psi(x)+R_{01}(h,t,\theta)+R_{02}(h,t,\theta),
 \end{equation}
 and the following estimates hold:

\begin{lemma}\label{L06}
 For $h$ large enough, $\theta,\ t\in \mathbb{S}^1$, $ k+j\leq\Upsilon_1-1,$ and $ l\leq {\Upsilon_2}$, it holds that:
 $$\Big|\partial^k_h\partial^l_t\partial^j_\theta R_{01}\Big|\leq Ch^{-k+\frac{1}{2}(max\{1,j\}-1)},$$
  and
  $$\Big|\partial^k_h\partial^l_t\partial^j_\theta R_{02}\Big|\leq Ch^{-\frac{1}{2}-\frac{k}{2}+\frac{1}{2}(max\{1,j\}-1)}.$$
    \end{lemma}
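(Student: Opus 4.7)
The plan is to differentiate term by term, combining the pointwise estimates in Lemmas~\ref{L01}, \ref{L03}, \ref{L04}, \ref{L05} via the Leibniz product rule and the Fa\`a di Bruno formula for the composite factors. For $R_{01}=(\partial_I f_1+\partial_I f_2)(f_1+f_2)$, applying Leibniz to $\partial^k_h\partial^l_t\partial^j_\theta R_{01}$ produces a finite sum of products of the form $\partial^{k_1+1}_I\partial^{l_1}_t\partial^{j_1}_\theta f_a\cdot \partial^{k_2}_I\partial^{l_2}_t\partial^{j_2}_\theta f_b$ with $a,b\in\{1,2\}$ and $(k_1{+}k_2,l_1{+}l_2,j_1{+}j_2)=(k,l,j)$. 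Lemmas~\ref{L01} and \ref{L03} bound the first factor by $Ch^{-1/2-k_1+(1/2)(\max\{1,j_1\}-1)}$ and the second by $Ch^{1/2-k_2+(1/2)(\max\{1,j_2\}-1)}$; summing exponents and using the elementary inequality $(\max\{1,j_1\}-1)+(\max\{1,j_2\}-1)\le \max\{1,j\}-1$ yields the claimed bound $Ch^{-k+(1/2)(\max\{1,j\}-1)}$, the extremal contribution coming from the splitting where all derivatives land on the large factor.

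For $R_{02}$ I would split the expression into three groups: $A=(\partial_I f_1+\partial_I f_2)\,f_3(h-R,\theta)$; $B$ the two integrals $\int_0^1(\partial_I f_1+\partial_I f_2)\,\partial_I f_i(h-\mu R,\cdot)\,R\,d\mu$; and $C$ the two double integrals $\int_0^1\!\int_0^1\partial^2_I f_i(h-s\mu R,\cdot)\,\mu R^2\,ds\,d\mu$. For $C$ the order of magnitude is $|\partial^2_I f_i|\cdot R^2\sim h^{-3/2}\cdot h=h^{-1/2}$, and the higher derivatives are obtained by applying Leibniz and Fa\`a di Bruno to the composition $h-s\mu R$, with $R$ controlled by Lemma~\ref{L05} and $\partial^2_I f_i$ by Lemmas~\ref{L01}, \ref{L03}; each $\partial_h$ only costs a factor $h^{-1/2}$ (because $\partial_h R=O(1)$ by Lemma~\ref{L05}), which explains the $-k/2$ in the stated exponent. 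For $B$ the two $\partial_I$ factors contribute $h^{-1}$ and the explicit $R$ contributes $h^{1/2}$, again giving $h^{-1/2}$ at zero derivatives and the same scaling under further differentiation. For $A$ the prefactor $\partial_I f_{1,2}\sim h^{-1/2}$ together with $|f_3|\le C$ gives $h^{-1/2}$ undifferentiated; higher derivatives of the composite $f_3(h-R,\theta)$ are handled via Fa\`a di Bruno using the estimates of Lemmas~\ref{L04} and \ref{L05}.

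The hard part will be the bookkeeping inside~$A$. Pure $\theta$-derivatives of $f_3(h-R,\theta)$ can grow like $h^{j/2}$ because of the oscillatory character of $\psi$, while the chain-rule contributions coming through $\partial_I f_3\cdot\partial_\theta R$ carry the extra negative powers of $h$. One has to check carefully that after multiplication by the prefactor $\partial_I f_{1,2}\sim h^{-1/2}$, and after summing over all multi-index splittings coming out of Fa\`a di Bruno, the total exponent is never worse than $-1/2-k/2+(1/2)(\max\{1,j\}-1)$. This estimate is only marginally sufficient, and already foreshadows the need, addressed in Section~\ref{s4}, for further canonical transformations that tame the oscillating terms before Moser's twist theorem can be invoked.
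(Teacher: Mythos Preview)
Your proposal is correct and matches the paper's proof in structure: Leibniz on the products, Fa\`a di Bruno on the composites $f_i(h-sR,\theta)$, and the lemma estimates for $f_1,f_2,f_3,R$ supplying the exponents. The paper organizes the $R_{02}$ argument by first isolating the composite bounds as a standalone Claim---namely $|\partial^k_h\partial^l_t\partial^j_\theta\,\partial_I^m f_i(h-sR,\theta)|\le Ch^{\frac12-m-\frac k2+\frac12(\max\{1,j\}-1)}$ for $m=1,2$ and $|\partial^k_h\partial^l_t\partial^j_\theta f_3(h-R,\theta)|\le Ch^{-\frac k2+\frac12(\max\{1,j\}-1)}$---and then combines these with the remaining factors exactly as you outline; your subadditivity observation $\sum_i(\max\{1,j_i\}-1)\le\max\{1,j\}-1$ is the ingredient that makes the product estimates close.
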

    The proof is given in the Appendix.

  \begin{remark}
    From
    Lemmas \ref{L01}, \ref{L02} and \ref{L06}, it shows that $f_1, f_2$ and $R_{01}$ satisfy the polynomial growth condition (\ref{growthderivative}) with variable $h$, while $-\frac{1}{n}\psi(x)$ and $R_{02}$
    do not satisfy the  polynomial growth condition due to the oscillating property of the periodic function $\psi(x)$.
       \end{remark}

    %\vskip0.1cm
     \section{The normal form of non-oscillating terms}\label{s3}
    In this section, we first introduce a rotation transformation to deal with resonance, then obtain the normal form for non-oscillating terms by canonical transformations.
   % Let
    %$(h_\nu,t_\nu),\ \nu=1,2,\ldots,$ be the conjugate coordinates  under the $\nu^{th}-$transformation, and the new time % %variable $\theta$ is never
   % changed here.
    \subsection{A rotation transformation}
Define a rotation transformation
$\Phi_1:(h_1, t_1, \theta)\rightarrow(h, t, \theta)$ by
\begin{equation}\label{T01}\left\{ \begin{array}{l}
   \nonumber     h=h_1\\
   \nonumber      t=t_1+\theta.
     \end{array}\right.
 \end{equation}
Under $\Phi_1$, the Hamiltonian $I$ is transformed into $I_1$ as following
 \begin{eqnarray}\label{I01}
   \nonumber   {I_1(h_1,t_1,\theta)
   = -f_1(h_1,\theta)-f_2(h_1,\theta,t_1+\theta)-\frac{1}{n}\psi(x)+R_{11}(h_1,t_1,\theta)+R_{12}(h_1,t_1,\theta)
 }\end{eqnarray}
 with $R_{11}(h_1,t_1,\theta)=R_{01}(h_1,t_1+\theta,\theta),\ R_{12}(h_1,t_1,\theta)=R_{02}(h_1,t_1+\theta,\theta)$.

 \begin{lemma}\label{L07}
 For $h_1$ large enough, $\theta,\ t_1\in \mathbb{S}^1$, and $ k+j\leq\Upsilon_1-1,\  l\leq {\Upsilon_2},$ it holds that:

 $$|\partial^k_{h_1}\partial^l_{t_1}\partial^j_\theta R_{11}|\leq C{h_1}^{-k+\frac{1}{2}(max\{1,j\}-1)},$$
  and
  $$|\partial^k_{h_1}\partial^l_{t_1} \partial^j_\theta R_{12}|\leq C{h_1}^{-\frac{1}{2}-\frac{k}{2}+\frac{1}{2}(max\{1,j\}-1)}.$$
    \end{lemma}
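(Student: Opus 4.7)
The plan is to prove Lemma \ref{L07} by a direct chain-rule calculation, using Lemma \ref{L06} as the input estimate. Since the transformation $\Phi_1$ only mixes $t$ and $\theta$ through $t = t_1+\theta$ (with $h=h_1$), differentiating $R_{11}(h_1,t_1,\theta)=R_{01}(h_1,t_1+\theta,\theta)$ gives
\[
\partial_{h_1}R_{11}=(\partial_h R_{01})\circ\Phi_1,\quad \partial_{t_1}R_{11}=(\partial_t R_{01})\circ\Phi_1,\quad \partial_\theta R_{11}=(\partial_t R_{01}+\partial_\theta R_{01})\circ\Phi_1,
\]
and analogously for $R_{12}$. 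The $h_1$ and $t_1$ variables inherit their derivative counts one-for-one, so the only nontrivial bookkeeping is for $\theta$-derivatives.

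By induction on $j$ (or by Leibniz applied to the identity $\partial_\theta = \partial_t\big|_{\text{first slot}} + \partial_\theta\big|_{\text{third slot}}$), I expect
\[
\partial_{h_1}^{k}\partial_{t_1}^{l}\partial_\theta^{j} R_{11}=\sum_{j'=0}^{j}\binom{j}{j'}\bigl(\partial_h^{k}\partial_t^{\,l+j-j'}\partial_\theta^{j'} R_{01}\bigr)\circ\Phi_1,
\]
and the same formula with $R_{01}$ replaced by $R_{02}$ and $R_{11}$ replaced by $R_{12}$. Applying Lemma \ref{L06} term by term, the first sum is bounded by $C\sum_{j'=0}^{j}h_1^{-k+\frac12(\max\{1,j'\}-1)}$, which is dominated by the $j'=j$ term since the exponent in $h_1$ is monotonically nondecreasing in $j'$; this gives exactly the advertised bound $h_1^{-k+\frac12(\max\{1,j\}-1)}$. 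The estimate on $R_{12}$ is identical in structure, producing the extra factor $h_1^{-1/2}$ inherited from Lemma \ref{L06}. The range $k+j\le\Upsilon_1-1$, $l\le\Upsilon_2$ is chosen precisely so that every term $\partial_h^k\partial_t^{l+j-j'}\partial_\theta^{j'}R_{0i}$ appearing in the sum lies inside the smoothness range where Lemma \ref{L06} is applicable (using the slack between $\Upsilon_1,\Upsilon_2$ and the actual orders needed).

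The calculation is routine, and there is no real obstacle; the only mild subtlety is verifying that the maximum of the right-hand exponents in the Leibniz expansion is attained at $j'=j$, so that mixing the $t$-derivative cost into additional $\theta$-derivatives does not degrade the bound. This is where the particular form $\frac12(\max\{1,j\}-1)$ of the exponent pays off: lower $j'$ values contribute with \emph{smaller} exponents, hence are automatically absorbed.
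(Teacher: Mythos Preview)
Your approach is correct and is exactly what the paper does: its entire proof reads ``It is obtained from Lemma~\ref{L06},'' and your binomial expansion via $\partial_\theta\big|_{R_{1i}} = (\partial_t + \partial_\theta)\big|_{R_{0i}}$ followed by termwise application of Lemma~\ref{L06} is the intended argument spelled out. One small caveat: your claim that ``every term lies inside the smoothness range'' is not literally true at the extreme end of the stated range (when $l=\Upsilon_2$ and $j'<j$ the term $\partial_h^k\partial_t^{l+j-j'}\partial_\theta^{j'}R_{0i}$ requires more than $\Upsilon_2$ $t$-derivatives of $p$), but this is a harmless imprecision in the index bookkeeping that the paper itself does not address and that never matters in the downstream applications.
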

\begin{proof}  It is obtained from Lemma \ref{L06}.\end{proof}

    \subsection{The normal form {with} $f_1(h_1,\theta)$}
    We make a canonical transformation $\Phi_2:(h_2,t_2,\theta)\rightarrow(h_1,t_1,\theta)$ given by
    \begin{equation}\label{T02}\left\{ \begin{array}{l}
     h_1=h_2,\\
      t_1=t_2-\partial_{ h_2}S_2(h_2,\theta)
     \end{array}\right.
 \end{equation}
 with the generating function $S_2(h_2,\theta)$ determined by
\begin{equation}\label{S02}
 S_2(h_2,\theta)=\int_0^\theta\big( f_1(h_2,\theta)-[f_1](h_2)\big)d\theta.
\end{equation}

Under $\Phi_2$, the Hamiltonian $I_1$ is transformed into $I_2$ as following
 \begin{eqnarray}\label{I02-1}
   \nonumber I_2(h_2,t_2,\theta)&=&-f_1(h_2,\theta)-f_2(h_2,\theta,t_2+\theta-\partial_{h_2}S_2(h_2,\theta))-\frac{1}{n}\psi(x)\\
  \nonumber  &&+R_{11}(h_2,t_2-\partial_{h_2}S_2(h_2,\theta),\theta)+R_{12}(h_2,t_2-\partial_{h_2}S_2(h_2,\theta),\theta)+\partial_{\theta}S_2(h_2,\theta)\\
 \nonumber \\
    \nonumber &=&-[f_1](h_2)-f_2(h_2,\theta,t_2+\theta)-\frac{1}{n}\psi(x) \\
    \nonumber &&+[f_1](h_2)-f_1(h_2,\theta)+\partial_{\theta}S_2(h_2,\theta)\\
    \nonumber &&+\int_0^1\partial_{t_1}f_2(h_2,\theta,t_2+\theta-\mu\partial_{h_2}S_2(h_2,\theta))\partial_{h_2}S_2(h_2,\theta)d\mu\\
   \nonumber  &&+R_{11}(h_2,t_2-\partial_{h_2}S_2(h_2,\theta),\theta)+R_{12}(h_2,t_2-\partial_{h_2}S_2(h_2,\theta),\theta).
 \end{eqnarray}
 It is clear that (\ref{S02}) implies
 \begin{equation}\label{}
  \nonumber  [f_1](h_2)-f_1(h_2,\theta)+\frac{\partial}{\partial \theta}S_2(h_2,\theta)=0.
\end{equation}
Let
\begin{eqnarray}\label{R021}
 \nonumber R_{21}(h_2,t_2,\theta)&=&R_{11}(h_2,t_2-\partial_{h_2}S_2(h_2,\theta),\theta)\\
  \nonumber  &&-\int_0^1\partial_{t_1}f_2(h_2,\theta,t_2-\mu\partial_{h_2}S_2(h_2,\theta))\partial_{h_2}S_2(h_2,\theta)d\mu,
 \end{eqnarray}
\begin{equation}\label{R022}
  \nonumber  R_{22}(h_2,t_2,\theta)\ =\ R_{12}(h_2,t_2-\partial_{h_2}S_2(h_2,\theta),\theta).\quad\quad\quad\quad\quad\quad\quad\quad\quad\quad
\end{equation}

Thus, $I_2$ is rewritten by
\begin{eqnarray}\label{I02}
   \nonumber    I_2(h_2,t_2,\theta)=-[f_1](h_2)-f_2(h_2,\theta,t_2+\theta)-\frac{1}{n}\psi(x)
    +R_{21}(h_2,t_2,\theta)+R_{22}(h_2,t_2,\theta).
 \end{eqnarray}

 We have the following estimates:
\begin{lemma}\label{L08}
 For $h_2$ large enough, $\theta,\ t_2\in \mathbb{S}^1$, it holds that
\begin{eqnarray}\label{S02-1}
 |\partial^k_{h_2}\partial^j_\theta S_2(h_2,\theta)|\leq
 Ch_2^{\frac{1}{2}-k+\frac{1}{2}(max\{2,j\}-2)},\   k+j\leq\Upsilon_1+1,
 \end{eqnarray}
 and
\begin{eqnarray}\label{phi2}
   \nonumber  && |\partial_{{h_2}} {t_1}|\leq C{h_2}^{-\frac{3}{2}},\ \
\partial_{{t_2}} {t_1}=1,\ \
|\partial_{\theta} {t_1}|\leq C{h_2}^{-\frac{1}{2}},\\
  \nonumber && |\partial^k_{{h_2}}\partial^l_{{t_2}}\partial^j_\theta
{t_1}|\leq C{h_2}^{-\frac{1}{2}-k+\frac{1}{2}(max\{2,j\}-2)},\
 k+l+j\geq2,\   k+j\leq \Upsilon_1.
 \end{eqnarray}
    Moreover, for $ k+j\leq \Upsilon_1-1$, it holds that:
   $$|\partial^k_{h_2}\partial^l_{t_2}\partial^j_\theta R_{21}|\leq C{h_2}^{-k+\frac{1}{2}(max\{1,j\}-1)},\  l\leq {\Upsilon_2}-1;$$
   $$ |\partial^k_{h_2}\partial^l_{t_2}\partial^j_\theta R_{22}|\leq C{h_2}^{-\frac{1}{2}-\frac{k}{2}+\frac{1}{2}(max\{1,j\}-1)},\  l\leq {\Upsilon_2}.$$

   \end{lemma}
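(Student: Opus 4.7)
The plan is to treat the three assertions of Lemma~\ref{L08} in sequence, each one feeding into the next. First I would obtain the $S_2$-estimate directly from the defining integral (\ref{S02}) and Lemma~\ref{L01}. For $j=0$, the bound $|\partial_{h_2}^k S_2|\le Ch_2^{1/2-k}$ comes from integrating in $\theta$ the pointwise bound on $\partial_{h_2}^k(f_1-[f_1])$; for $j\ge 1$ one differentiates $\partial_\theta S_2=f_1-[f_1]$ directly, so $\partial_{h_2}^k\partial_\theta^j S_2=\partial_{h_2}^k\partial_\theta^{j-1} f_1$, and Lemma~\ref{L01}'s bound $h_2^{1/2-k+(1/2)(\max\{1,j-1\}-1)}$ agrees with the claim $h_2^{1/2-k+(1/2)(\max\{2,j\}-2)}$ after separately checking $j=1$ and $j\ge 2$.

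The $t_1$-estimates then come for free. Since $t_1=t_2-\partial_{h_2}S_2(h_2,\theta)$, one has $\partial_{t_2}t_1=1$ and all higher $t_2$-derivatives vanish; in particular $\partial_{h_2}^k\partial_{t_2}^l\partial_\theta^j t_1=0$ whenever $l\ge 1$ and $k+j\ge 1$. The remaining nontrivial case $l=0$, $k+j\ge 1$ gives $\partial_{h_2}^k\partial_\theta^j t_1=-\partial_{h_2}^{k+1}\partial_\theta^j S_2$, whose exponent, by the previous paragraph, is exactly $-\tfrac12-k+\tfrac12(\max\{2,j\}-2)$; the three single-derivative bounds follow by specialization.

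For $R_{21}$ and $R_{22}$, I would apply Faà di Bruno's formula to the compositions $R_{1i}(h_2,t_2-\sigma,\theta)$ with $\sigma:=\partial_{h_2}S_2(h_2,\theta)$, and analogously to $\partial_{t_1}f_2(h_2,\theta,t_2-\mu\sigma)$ inside the integral term. The leading contribution, obtained when all $k+l+j$ derivatives fall directly on $R_{1i}$ (resp.\ on $\partial_{t_1}f_2$) without touching the inner $\sigma$, reproduces the claimed rate via Lemma~\ref{L07} (resp.\ Lemma~\ref{L03}); for the integral term, $|\partial_{t_1}f_2|\le Ch_2^{1/2}$ multiplied by $|\sigma|\le Ch_2^{-1/2}$ is of order $1$, matching the $k=l=j=0$ bound on $R_{21}$. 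Each off-diagonal branch of the expansion contains at least one factor $|\partial_{h_2}^{k'}\partial_\theta^{j'}\sigma|\le Ch_2^{-1/2-k'+(1/2)(\max\{2,j'\}-2)}$ whose exponent is never worse than the one produced by hitting $R_{1i}$ directly in the corresponding slot, so the overall exponent is preserved. The drop from $\Upsilon_2$ to $\Upsilon_2-1$ in the $R_{21}$-statement simply reflects the extra $\partial_{t_1}$ already spent on extracting the Taylor remainder.

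The main obstacle is the Faà di Bruno bookkeeping: I must verify that for every admissible allocation of the $(k,l,j)$ derivatives among the outer function and the nested $\sigma$'s, the product of the resulting bounds collapses to the single exponent on the right-hand side. The key arithmetic is that a derivative routed through $\sigma$ costs $-\tfrac12$ per $h_2$-slot and gains at most $+\tfrac12$ per $\theta$-slot beyond the second, which is always dominated by a direct hit on $R_{1i}$ or $\partial_{t_1}f_2$ in the same slot; hence no branch of the expansion exceeds the target rate, and the lemma follows.
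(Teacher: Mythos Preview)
Your proposal is correct and follows essentially the same route as the paper's proof: derive the $S_2$ bound from (\ref{S02}) and Lemma~\ref{L01}, read off the $t_1$ estimates from $t_1=t_2-\partial_{h_2}S_2$, and handle $R_{21},R_{22}$ by expanding the compositions via the chain rule (the paper calls it Leibniz's rule, you call it Fa\`a di Bruno) and checking that every branch of the expansion stays within the target exponent. Your explicit observation that all mixed derivatives with $l\ge 1$ and $k+j\ge 1$ vanish, and your identification of the $\Upsilon_2\to\Upsilon_2-1$ drop with the $\partial_{t_1}$ spent in the Taylor remainder, are exactly the mechanisms at work in the paper, just stated more explicitly.
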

    The proof is given in the Appendix.

   \vskip1cm

\subsection{The normal form {with} $f_2(h_2,\theta,t_2+\theta)$}

Without causing confusion, for convenience we still denote
 \begin{eqnarray}\label{}
    \nonumber   [f_2](h,t)&=&\frac{1}{2\pi}\int_0^{2\pi}f_2(h,\theta,t+\theta)d\theta.
 \end{eqnarray}
Then we have
  \begin{lemma}\label{LLc-1}
 For any $h\in\mathbb{R}^+$, $t\in \mathbb{S}^1$, it holds that
    \begin{eqnarray}\label{}
   \nonumber [f_2](h,t)&=&-\frac{\sqrt{2}}{2\pi}n^{-\frac{3}{2}}h^{\frac{1}{2}}\Big\{\cos
(nt)\int_0^{2\pi}p(\tau)\cos (n\tau)d\tau+\sin
(nt)\int_0^{2\pi}p(\tau)\sin( n\tau)d\tau\Big\}.
 \end{eqnarray}
 Moreover,
 \begin{eqnarray}\label{f2m}
    \Big|[f_2](h,t)\Big|&\leq&\frac{\sqrt{2}}{2\pi}n^{-\frac{3}{2}}h^{\frac{1}{2}}\Bigg|\int_0^{2\pi}p(\tau)e^{in\tau}d\tau\Bigg|.
 \end{eqnarray}

    \end{lemma}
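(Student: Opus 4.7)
The plan is a direct computation from the definition of $f_2$. Recall that $f_2(I,\theta,t)=-\tfrac{1}{n}\sqrt{2/n}\,I^{1/2}\cos(n\theta)\,p(t)$, so evaluating at $I=h$ and $t\mapsto t+\theta$ and averaging in $\theta$ yields
\[
[f_2](h,t)=-\frac{\sqrt{2}}{2\pi n^{3/2}}\,h^{1/2}\int_0^{2\pi}\cos(n\theta)\,p(t+\theta)\,d\theta.
\]
Thus the $h$-dependence is already factored out as $h^{1/2}$, and everything reduces to analyzing the trigonometric integral in $\theta$.

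Next I would perform the change of variable $u=t+\theta$. Since $p$ is $2\pi$-periodic, the integration range $[t,t+2\pi]$ can be replaced by $[0,2\pi]$. Expanding $\cos(n\theta)=\cos\bigl(n(u-t)\bigr)=\cos(nu)\cos(nt)+\sin(nu)\sin(nt)$ and pulling the $t$-dependent factors out of the $u$-integral produces precisely the formula
\[
[f_2](h,t)=-\frac{\sqrt{2}}{2\pi}n^{-3/2}h^{1/2}\Bigl\{\cos(nt)\!\int_0^{2\pi}\!p(\tau)\cos(n\tau)d\tau+\sin(nt)\!\int_0^{2\pi}\!p(\tau)\sin(n\tau)d\tau\Bigr\}.
\]

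For the bound (\ref{f2m}), the key observation is that the quantity in braces is the real part of $e^{-int}\int_0^{2\pi}p(\tau)e^{in\tau}d\tau$. Indeed, writing $A=\int_0^{2\pi}p(\tau)\cos(n\tau)d\tau$ and $B=\int_0^{2\pi}p(\tau)\sin(n\tau)d\tau$, one has $A+iB=\int_0^{2\pi}p(\tau)e^{in\tau}d\tau$, and $\mathrm{Re}\bigl[(\cos(nt)-i\sin(nt))(A+iB)\bigr]=A\cos(nt)+B\sin(nt)$. Hence the brace is bounded in absolute value by $|A+iB|=\bigl|\int_0^{2\pi}p(\tau)e^{in\tau}d\tau\bigr|$, which gives (\ref{f2m}).

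There is no genuine obstacle here: the lemma is a purely algebraic consequence of the explicit form of $f_2$, the $2\pi$-periodicity of $p$, and the elementary inequality $|a\cos\alpha+b\sin\alpha|\leq\sqrt{a^2+b^2}$ recast in complex form. The content of the lemma is essentially that the only resonant Fourier mode of $p$ surviving the averaging is the $n$-th one, which is exactly what makes the Lazer--Leach quantity $\bigl|\int_0^{2\pi}p(\tau)e^{in\tau}d\tau\bigr|$ appear naturally in the subsequent normal form analysis.
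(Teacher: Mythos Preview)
Your proof is correct and follows essentially the same approach as the paper: the paper writes out the average, substitutes $\tau=t+\theta$ (implicitly, via the same cosine addition formula), and then says the bound follows ``by the norm of complex number immediately.'' Your version simply makes the change of variable and the complex-number inequality explicit, which the paper leaves to the reader.
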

   \begin{proof}
     \begin{eqnarray}\label{}
   \nonumber [f_2](h,t)&=&\frac{1}{2\pi}\int_0^{2\pi}f_2(h,\theta,t+\theta)d\theta\\
    \nonumber &=&-\frac{\sqrt{2}}{2\pi}n^{-\frac{3}{2}}h^{\frac{1}{2}}\int_0^{2\pi}\cos(n\theta)
p(t+\theta)d\theta\\
    \nonumber &=&-\frac{\sqrt{2}}{2\pi}n^{-\frac{3}{2}}h^{\frac{1}{2}}\{\cos(nt)\int_0^{2\pi}p(\tau)
  \nonumber \cos  (n\tau)d\tau+\sin(nt)\int_0^{2\pi}p(\tau)\sin( n\tau)d\tau\}.\\
\nonumber  \end{eqnarray}
 Thus, (\ref{f2m}) is obtained by the  norm of  complex number immediately.
   \end{proof}

Now, we make a transformation
$\Phi_3:(h_3,t_3, \theta)\rightarrow(h_2,t_2, \theta)$  implicitly given by
    \begin{equation}\label{T03}\left\{ \begin{array}{l}
     h_2=h_3+\partial_{ t_2}S_3(h_3,t_2,\theta)\\
      t_3\ =t_2+\partial_{ h_3}S_3(h_3,t_2,\theta)
     \end{array}\right.
 \end{equation}
 with the generating function $S_3(h_3,t_2,\theta)$ determined by
\begin{equation}\label{S03}
 S_3(h_3,t_2,\theta)=\int_0^\theta( f_2(h_3,\theta,t_2+\theta)-[f_2](h_3,t_2))d\theta.
\end{equation}

Under $\Phi_3$, the Hamiltonian $I_2$ is transformed into $I_3$ as
following
 \begin{eqnarray}\label{}
   \nonumber I_3(h_3,t_3,\theta)&=&-[f_1](h_3+\partial_{ t_2}S_3)-f_2(h_3+\partial_{ t_2}S_3,\theta,t_2+\theta)-\frac{1}{n}\psi(x)\\
     \nonumber&&+R_{21}(h_3+\partial_{ t_2}S_3,t_3-\partial_{ h_3}S_3,\theta)+R_{22}(h_3+\partial_{ t_2}S_3,t_3-\partial_{ h_3}S_3,\theta)+\partial_{\theta}S_3\\
         \nonumber &=&-[f_1](h_3)-[f_2](h_3,t_3)-\frac{1}{n}\psi(x) \\
    \nonumber &&+[f_2](h_3,t_2)-f_2(h_3,\theta,t_2+\theta)+\partial_{\theta}S_3\\
    \nonumber &&-\int_0^1[f_1]'(h_3+\mu\partial_{ t_2}S_3)\partial_{t_2}S_3(h_3,t_2,\theta)d\mu\\
     \nonumber  &&-\int_0^1\partial_If_2(h_3+\mu\partial_{ t_2}S_3,\theta,t_2+\theta)\partial_{t_2}S_3(h_3,t_2,\theta)d\mu\\
     \nonumber &&+\int_0^1\partial_t[f_2](h_3,t_3-\mu\partial_{ h_3}S_3)\partial_{h_3}S_3d\mu\\
       \nonumber    &&+R_{21}(h_3+\partial_{ t_2}S_3,t_3-\partial_{ h_3}S_3,\theta)+R_{22}(h_3+\partial_{ t_2}S_3,t_3-\partial_{ h_3}S_3,\theta).
 \end{eqnarray}
(\ref{S03}) implies
 \begin{equation}\label{}
  \nonumber  [f_2](h_3,t_2)-f_2(h_3,\theta,t_2+\theta)+\partial_{\theta}S_3=0.
\end{equation}
Let
 \begin{eqnarray}\label{}
   \nonumber \alpha(h_3,t_3)\ =-[f_1](h_3)-[f_2](h_3,t_3);\quad\quad\quad\quad\quad\quad\quad\quad
 \end{eqnarray}
 \begin{eqnarray}\label{R031}
   \nonumber R_{31}(h_3,t_3,\theta)&= &R_{21}(h_3+\partial_{ t_2}S_3,t_3-\partial_{ h_3}S_3,\theta)\\
  \nonumber  &&-\int_0^1[f_1]'(h_3+\mu\partial_{ t_2}S_3)\partial_{t_2}S_3(h_3,t_2,\theta)d\mu \\
    \nonumber   &&- \int_0^1\partial_If_2(h_3+\mu\partial_{ t_2}S_3,\theta,t_2+\theta)\partial_{t_2}S_3(h_3,t_2,\theta)d\mu\\
     \nonumber    &&+\int_0^1\partial_t[f_2](h_3,t_3-\mu\partial_{ h_3}S_3)\partial_{h_3}S_3d\mu;
 \end{eqnarray}
 \begin{eqnarray}\label{R032}
   \nonumber    R_{32}(h_3,t_3,\theta) &=&R_{22}(h_3+\partial_{ t_2}S_3,t_3-\partial_{
    h_3}S_3,\theta).\quad\quad\quad\quad\quad\quad\quad\quad\quad
 \end{eqnarray}
 Thus we have
  \begin{eqnarray}\label{I03}
   \nonumber I_3(h_3,t_3,\theta)=\alpha(h_3,t_3)-\frac{1}{n}\psi(x)
     +R_{31}(h_3,t_3,\theta)+R_{32}(h_3,t_3,\theta).
   \end{eqnarray}

\begin{lemma}\label{L09}
 For $h_3$ large enough, $\theta,\ t_3\in \mathbb{S}^1$, it holds that:
  $$\Big|\partial^k_{h_3}\partial^l_{t_2}\partial^j_\theta S_3(h_3,t_2,\theta)\Big|\leq Ch_3^{\frac{1}{2}-k},\  l\leq {\Upsilon_2},\ \forall\ k,j;$$
 \begin{eqnarray}\label{alpha}
ch_6^{\frac{1}{2}-k}\leq\Big|{\partial^k_{h_6}\alpha(h_6,t_6)}\Big|\leq
Ch_6^{\frac{1}{2}-k},\ k=0,1,2;
 \end{eqnarray}
  \begin{eqnarray}\label{alpha-1}
\Big|{\partial^k_{h_6}\partial^l_{t_6}\alpha(h_6,t_6)}\Big|\leq
Ch_6^{\frac{1}{2}-k},\ k\leq\Upsilon_1+1,\  l\leq {\Upsilon_2};
 \end{eqnarray}
  and for $ k+j\leq\Upsilon_1-1,$
  $$ \Big|\partial^k_{h_3}\partial^l_{t_3}\partial^j_\theta R_{31}\Big|\leq C{h_3}^{-k+\frac{1}{2}(max\{1,j\}-1)},\  l\leq {\Upsilon_2}-1;$$
  $$ \Big|\partial^k_{h_3}\partial^l_{t_3}\partial^j_\theta R_{32}\Big|\leq C{h_3}^{-\frac{1}{2}-\frac{k}{2}+\frac{1}{2}(max\{1,j\}-1)},\  l\leq {\Upsilon_2}-1.$$

     Moreover, the map $\Phi_3$ satisfies
\begin{eqnarray}\label{phi3}
 \nonumber  |\partial_{{h_3}} {t_2}|\leq C{h_3}^{-\frac{3}{2}},\ \
\frac{1}{2} \leq|\partial_{{t_3}} {t_2}|\leq 2,\ \
|\partial_{\theta} {t_2}|\leq C{h_3}^{-\frac{1}{2}},\\
\nonumber |\partial^k_{{h_3}}\partial^l_{{t_3}}\partial^j_\theta
{t_2}|\leq C{h_3}^{-\frac{1}{2}-k},\ k+l+j\geq2,\  l\leq
{\Upsilon_2};
\\
\nonumber\frac{1}{2} \leq|\partial_{{h_3}} {h_2}|\leq 2,\ \
|\partial_{{t_3}} {h_2}|\leq C{h_3}^{\frac{1}{2}},\ \
|\partial_{\theta} {h_2}|\leq C{h_3}^{\frac{1}{2}},
\\
\nonumber |\partial^k_{{h_3}}\partial^l_{{t_3}}\partial^j_\theta
{h_2}|\leq C{h_3}^{\frac{1}{2}-k},\ k+l+j\geq2,\  l\leq
{\Upsilon_2}-1.
 \end{eqnarray}
  \end{lemma}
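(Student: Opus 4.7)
I would establish the five groups of estimates in sequence, exploiting the explicit formula for $S_3$, Lemma \ref{LLc-1}, and the preceding Lemmas \ref{L01}--\ref{L08}.

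First, for the generating function, I differentiate
$$S_3(h_3,t_2,\theta)=\int_0^\theta\bigl(f_2(h_3,\theta',t_2+\theta')-[f_2](h_3,t_2)\bigr)\,d\theta'$$
under the integral sign. Every $h_3$-derivative of $f_2$ loses exactly one power of $h_3^{1/2}$ by Lemma \ref{L03}, while $t_2$- and $\theta$-differentiations merely act on $p(t)$ and $\cos(n\theta)$ and are uniformly bounded. The average $[f_2]$ has the same scaling by Lemma \ref{LLc-1}, so $|\partial_{h_3}^k\partial_{t_2}^l\partial_\theta^j S_3|\le Ch_3^{1/2-k}$ with no loss in $l$ or $j$.

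Next I treat $\alpha=-[f_1]-[f_2]$. The upper bound (\ref{alpha-1}) is immediate from Lemmas \ref{L01} and \ref{LLc-1}. The two-sided bound (\ref{alpha}) is the delicate point: Lemma \ref{L02} gives $[f_1]^{(k)}(h)\sim c_k n^{-3/2}\bigl(g(+\infty)-g(-\infty)\bigr)h^{1/2-k}$ for $k=0,1,2$ with explicit nonzero coefficients $c_k$, while Lemma \ref{LLc-1} yields $|\partial_h^k[f_2](h,t)|\le \tfrac{\sqrt 2}{2\pi}|c_k|\,n^{-3/2}\bigl|\int_0^{2\pi}p(\tau)e^{in\tau}d\tau\bigr|h^{1/2-k}$ with the \emph{same} $c_k$ (because the $h^{1/2}$ factor is shared). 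The strict inequality (\ref{LLc}) therefore produces a positive spectral gap, giving $ch^{1/2-k}\le|\partial_h^k\alpha|\le Ch^{1/2-k}$ for $k=0,1,2$.

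For the transformation $\Phi_3$ I invert (\ref{T03}) implicitly. Since $\partial_{t_2}S_3=O(h_3^{1/2})$ and $\partial_{h_3}S_3=O(h_3^{-1/2})$, the Jacobian of $(h_3,t_2)\mapsto(h_2,t_3)$ differs from the identity by $O(h_3^{-1/2})$ and is invertible for $h_3$ large. Differentiating $t_3=t_2+\partial_{h_3}S_3(h_3,t_2,\theta)$ with respect to $h_3$, $t_3$, $\theta$ in turn and solving gives
$$\partial_{h_3}t_2=-\frac{\partial_{h_3}^2S_3}{1+\partial_{t_2}\partial_{h_3}S_3}=O(h_3^{-3/2}),\qquad \partial_{t_3}t_2\in[\tfrac12,2],\qquad \partial_\theta t_2=O(h_3^{-1/2}),$$
and the higher mixed derivatives follow by induction, each additional $h_3$-differentiation costing a factor $h_3^{-1}$. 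The $h_2$-bounds in the last displayed block of the lemma then come from $h_2=h_3+\partial_{t_2}S_3(h_3,t_2,\theta)$ combined with these $t_2$-estimates via the chain rule, losing exactly one extra factor $h_3^{1/2}$ because of the prefactor in $S_3$.

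Finally, I substitute the $\Phi_3$-estimates into the definitions of $R_{31},R_{32}$. The term $R_{32}$ is the pure pushforward of $R_{22}$, so Lemma \ref{L08} together with the $\Phi_3$-bounds reproduces $|\partial_{h_3}^k\partial_{t_3}^l\partial_\theta^j R_{32}|\le Ch_3^{-1/2-k/2+\frac12(\max\{1,j\}-1)}$. For $R_{31}$, the leading contribution is again the pushforward of $R_{21}$, treated identically; the three correction integrals pair $[f_1]'$, $\partial_I f_2$, or $\partial_t[f_2]$ with $\partial_{t_2}S_3$ or $\partial_{h_3}S_3$, and each such product is bounded by $C h_3^{-k+\frac12(\max\{1,j\}-1)}$ using Lemmas \ref{L01}, \ref{L03}, and \ref{LLc-1} together with the $S_3$-bound from the first step.

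The single conceptual obstacle is the lower bound in (\ref{alpha}), which is precisely where the strict Lazer-Leach inequality (\ref{LLc}) enters and determines the whole scheme; the rest is careful but routine bookkeeping of powers of $h_3$ through implicit differentiation and composition.
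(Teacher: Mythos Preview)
Your proposal is correct and follows essentially the same approach as the paper: the authors also derive the two-sided bound (\ref{alpha}) from the Lazer--Leach condition (\ref{LLc}) combined with Lemmas \ref{L02} and \ref{LLc-1}, and then simply declare that the remaining estimates on $S_3$, $\Phi_3$, $R_{31}$, $R_{32}$ are obtained as in Lemma \ref{L08}. Your write-up is in fact considerably more detailed than the paper's two-line proof, but the underlying argument---implicit differentiation of (\ref{T03}), chain-rule bookkeeping, and identification of the lower bound on $\alpha$ as the sole place where (\ref{LLc}) enters---is identical.
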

\begin{proof}  From (\ref{LLc}),  Lemmas \ref{L02} and
 \ref{LLc-1}, (\ref{alpha}) and (\ref{alpha-1}) holds.
 The rest of the proof  is similar to the one of lemma \ref{L08}.\end{proof}

   \begin{remark}
   % {At the end of Section \ref{s3}, we mention that}
    For the case $\psi(x)=0$, it is not difficult to obtain the boundedness of the system with Hamiltonian $I_3$ by some canonical transformations, see \cite{Xing2012}. However if  $\psi(x)\not=0$, the perturbation of $I_3$ does not satisfy the polynomial growth condition and thus further canonical transformations are needed.
   \end{remark}

\section{The oscillating terms}\label{s4}
The main difficulty of this paper is how to deal with the oscillating {terms caused by $\psi(x)$}. Without the polynomial growth condition, the estimates on the
derivatives of the oscillating terms  are very poor. For this reason,
we cannot reduce the perturbation in the sublinear system to be a
small one by only repeated applications of canonical transformations.
A key observation is that the canonical transformations can help to improve
the poor estimates, see Subsection \ref{4.3}. Thus we can find further canonical transformations to
obtain a nearly integrable superlinear system, see Subsection
\ref{4.5},  then Moser's theorem is available.

\subsection{A canonical transformation for
$ \psi(x)$}\label{s4.1}

In this subsection, we will make a transformation
 to deal with
$ \psi(x)$. Recall all the transformations we have done
before this section:
 $$(x,y,t)\rightarrow(I,\theta,t), \ \mbox{where}\ x=\sqrt{\frac{2}{n}}I^{\frac{1}{2}}\cos{n\theta},\ y=\sqrt{\frac{2}{n}}I^{\frac{1}{2}}\sin{n\theta};$$
 $$(I,\theta,t)\rightarrow(h,t,\theta), \ \mbox{where}\ I=h-R(h,t,\theta);$$
 and then
 $$(h,t, \theta)=\Phi_1(h_1,t_1, \theta),\ (h_1,t_1, \theta)=\Phi_2(h_2,t_2, \theta),\ (h_2,t_2, \theta)=\Phi_3(h_3,t_3, \theta).$$

   Thus
   \begin{eqnarray}\label{}
   \nonumber  \psi(x)= \psi \Big(\sqrt{\frac{2}{n}}I^{\frac{1}{2}}\cos{n\theta}\Big)= \psi
 \Big(\sqrt{\frac{2}{n}}(h-R)^{\frac{1}{2}}\cos{n\theta}\Big)=\cdots\\
  \nonumber  = \psi
\Big
(\sqrt{\frac{2}{n}}(h_3)^{\frac{1}{2}}(1+Q(h_3,t_3,\theta))^{\frac{1}{2}}\cos{n\theta}\Big),
   \end{eqnarray}
where
$Q(h_3,t_3,\theta)=h_3^{-1}\big(h_2(h_3,t_3,\theta)-h_3-R(h_2(h_3,t_3,\theta),t(h_3,t_3,\theta),\theta)\big)$
satisfies
\begin{lemma}\label{LQ1}
 For $h_3$ large enough, $\theta,\ t_3\in \mathbb{S}^1$, $ k+j\leq\Upsilon_1,$ and  $ l\leq {\Upsilon_2}-1$, it holds that:
 \begin{equation}\label{Q001}
 \Big|\partial^k_{h_3}\partial^l_{t_3}\partial^j_{\theta}Q(h_3,t_3,\theta)\Big|\leq C{h_3}^{-\frac{1}{2}-\frac{k}{2}+\frac{1}{2}(max\{1,j\}-1)}.
 \end{equation}
Moreover, the following equation holds true:
\begin{eqnarray}\label{Q02}
\partial^2_\theta
 Q(h_3,t_3,\theta)=q_1(h_3,t_3,\theta)+q_2(h_3,t_3,\theta)\sin^2 n\theta
 \end{eqnarray}
with $|q_1|\leq Ch_3^{-\frac{1}{2}},\ |q_2|\leq C$.
  \end{lemma}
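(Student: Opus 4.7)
The plan is to extract both claims from the definition
$$h_3\, Q \;=\; \bigl(h_2(h_3,t_3,\theta) - h_3\bigr) \;-\; R\bigl(h_2(h_3,t_3,\theta),\, t(h_3,t_3,\theta),\, \theta\bigr),$$
by applying the chain rule to the estimates already collected: Lemma \ref{L05} for $R$, and Lemmas \ref{L08}--\ref{L09} for the dependence of $h_2$ and $t$ on $(h_3,t_3,\theta)$. By $(\ref{T03})$, $h_2 - h_3 = \partial_{t_2}S_3 = O(h_3^{1/2})$, and $R(h_2,t,\theta) = O(h_3^{1/2})$ by Lemma \ref{L05}, so already $Q = O(h_3^{-1/2})$, matching $(\ref{Q001})$ at $k = j = 0$.

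For the general estimate $(\ref{Q001})$ I would run Fa\`a di Bruno on both pieces. Each $\partial_\theta$ either lands on $R$ directly (contributing the $h_3^{(\max\{1,j\}-1)/2}$ gain visible in Lemma \ref{L05}) or is routed through $\partial_\theta h_2 = O(h_3^{1/2})$ and $\partial_\theta t = O(1)$; each $\partial_{h_3}$ either lands on $R$ (giving a factor $h_3^{-1/2}$) or on $\partial_{h_3}h_2 = O(1),\ \partial_{h_3}t = O(h_3^{-3/2})$; each $\partial_{t_3}$ is routed through $\partial_{t_3}h_2 = O(h_3^{1/2}),\ \partial_{t_3}t = O(1)$. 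In every monomial produced, the size is bounded by $Ch_3^{1/2 - k/2 + (\max\{1,j\}-1)/2}$; dividing by $h_3$ yields $(\ref{Q001})$.

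For the structural decomposition $(\ref{Q02})$ the point is to isolate the unique $O(h_3)$ piece inside $\partial^2_\theta R$. Using $(\ref{R03})$, write $R = f_1(h,\theta) + f_2(h,\theta,t) + f_3(h-R,\theta) - R_{01} - R_{02}$. Each of $f_1, f_2, f_3$ depends on $\theta$ only through $x = \sqrt{2/n}\,I^{1/2}\cos(n\theta)$, and the identities $\partial_\theta x = -\sqrt{2n}\,I^{1/2}\sin(n\theta) + O(1)$, $\partial^2_\theta x = -n^2 x + O(h_3^{1/2})$ give, for any scalar $F$,
$$\partial^2_\theta F(x) \;=\; 2n I\, F''(x)\sin^2(n\theta) \;-\; n^2 x F'(x) \;+\; O(h_3^{1/2}).$$
Applied with $F \in \{G/n,\ -xp(t)/n,\ \psi/n\}$, only the first summand is genuinely $O(I) = O(h_3)$; the $f_1$ contribution equals $2h_3\, g'(x)\sin^2(n\theta)$ and the $f_3$ contribution equals $2h_3\,\psi''(x)\sin^2(n\theta)$ up to $O(h_3^{1/2})$ corrections, while $f_2$ contributes nothing here because $F''\equiv 0$. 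Since $g'$ is bounded by $(A_1)$ and $\psi''$ is bounded because $\psi'\in C^{\Upsilon_1}(\mathbb{R}/T\mathbb{Z})$, dividing by $h_3$ gives the uniform bound $|q_2|\le C$. All remaining contributions — the $-n^2 x F'(x)$ pieces, $\partial^2_\theta R_{01}, \partial^2_\theta R_{02}$ (of size $O(h_3^{1/2})$ by Lemma \ref{L06}), and every cross term from routing $\partial_\theta$ through $h_2$ or $t$ — are $O(h_3^{1/2})$, hence absorb into $q_1$ with $|q_1|\le Ch_3^{-1/2}$.

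The main obstacle is checking that the oscillating $\psi$-term does not spoil the uniform bound on $q_2$. Lemma \ref{L04} controls $\partial^2_\theta f_3$ only by $Ch$, the same order as the $f_1$-contribution, so one cannot treat $\psi$ as a small perturbation. The saving grace is precisely that $\psi$ enters through $x$ alone, so the chain-rule identity above applies uniformly to $f_3$, and the coefficient of $\sin^2(n\theta)$ in $\partial^2_\theta f_3$ is $2\psi''(x)$, bounded thanks to the periodicity and smoothness of $\psi'$.
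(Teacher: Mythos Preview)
Your proposal is correct and follows essentially the same route as the paper: both split $h_3 Q=(h_2-h_3)-R$, estimate the composite derivatives of each piece via the chain rule using Lemmas~\ref{L05}, \ref{L08}, \ref{L09}, and for $(\ref{Q02})$ decompose $R$ via $(\ref{R03})$ into $f_1,f_2,\tfrac{1}{n}\psi(x),R_{01},R_{02}$ and identify the $\sin^2 n\theta$ contribution as coming from the $F''(x)(\partial_\theta x)^2$ term in $f_1$ and $\psi$. Your unified chain-rule identity for $\partial^2_\theta F(x)$ is a slightly cleaner packaging than the paper's term-by-term computation; two small points to fill in explicitly are the composition giving the $t(h_3,t_3,\theta)$ estimates (the paper does this in two steps from Lemmas~\ref{L08}--\ref{L09}) and the $\partial^2_\theta(h_2-h_3)=O(h_3^{1/2})$ contribution to $q_1$, but both are immediate from what you cite.
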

  The proof is technical and we give it in the Appendix.

   \vskip1cm

 For convenience, we denote
$\widetilde{f}_3(h_3, t_3,\theta)=\frac{1}{n}\psi(x).$ Using Pan
and Yu's method \cite{Pan}, we can prove that { the average}
$[\widetilde{f}_3](h_3, t_3)$ possesses an estimate better than
the one for $\tilde{f}_3$ itself as follows.
\begin{lemma}\label{L10}
 For $h_3$ large enough, $\theta,\ t_3\in \mathbb{S}^1$, it holds that
 \begin{equation}\label{pan01}
 %c{h_3}^{-\frac{1}{4}}\leq
 \Big|[\widetilde{f}_3](h_3, t_3)\Big|=\Big|\int_0^{2\pi}f_3(h_3+\partial_{
t_2}S_3,\theta)d\theta\Big|\leq C{h_3}^{-\frac{1}{4}};
 \end{equation}
moreover,
\begin{equation}\label{pan02}
%c{h_3}^{-\frac{1}{4}-\frac{k}{2}}\leq
\Big|\partial^k_{h_3}\partial^l_{t_3}[\widetilde{f}_3](h_3,
t_3)\Big|\leq C{h_3}^{-\frac{1}{4}-\frac{k}{2}},\
l\leq{\Upsilon_2}-1,\  k\leq\Upsilon_1.
 \end{equation}
  \end{lemma}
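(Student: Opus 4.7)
The plan is to use the zero-mean hypothesis $(A_2)$ to write $\psi = \Psi'$ with $\Psi$ bounded and $T$-periodic, and then exploit the rapid oscillation of the argument
\[
A(h_3,t_3,\theta) := \sqrt{\tfrac{2}{n}}\,h_3^{1/2}\bigl(1+Q(h_3,t_3,\theta)\bigr)^{1/2}\cos n\theta
\]
of $\psi$ in the average $[\widetilde{f}_3](h_3,t_3) = \frac{1}{2\pi n}\int_0^{2\pi}\psi(A)\,d\theta$ via a van der Corput / stationary-phase type integration by parts. The $h_3^{-1/4}$ saving will come from balancing the contributions of the two regimes where $|\sin n\theta|$ is large versus small.

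Writing $\psi(A) = \partial_\theta[\Psi(A)]/\partial_\theta A$ on any set where $\partial_\theta A$ does not vanish, one integration by parts yields
\[
\int \psi(A)\,d\theta \;=\; \Bigl[\tfrac{\Psi(A)}{\partial_\theta A}\Bigr] \;+\; \int \Psi(A)\,\tfrac{\partial_\theta^2 A}{(\partial_\theta A)^2}\,d\theta.
\]
Lemma \ref{LQ1} gives $\partial_\theta A = -\sqrt{2n}\,h_3^{1/2}\sin n\theta + O(1)$, so on the good set $G_\delta := \{\theta : |\sin n\theta| \geq \delta\}$ with $\delta \geq C h_3^{-1/2}$ we have $|\partial_\theta A|\geq c\,h_3^{1/2}\delta$. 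The decomposition (\ref{Q02}) of $\partial_\theta^2 Q$ is crucial here: when $\partial_\theta^2 A$ is divided by $(\partial_\theta A)^2 \sim (h_3^{1/2}\sin n\theta)^2$, the potentially dangerous $\sin^2 n\theta$-piece of $\partial_\theta^2 A$ is exactly absorbed, and the body integral on $G_\delta$ is of order $(h_3^{1/2}\delta)^{-1}$. On the bad set $B_\delta := [0,2\pi]\setminus G_\delta$ one uses the trivial bound $|\psi|\leq C$ together with $|B_\delta|\leq C\delta$, giving a contribution of order $\delta$. Summing and choosing $\delta = h_3^{-1/4}$ to balance the two terms proves (\ref{pan01}).

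For the derivative estimate (\ref{pan02}), I would differentiate under the integral and iterate the same argument. Each $\partial_{h_3}$ contributes a factor $\partial_{h_3}A = O(h_3^{-1/2})$ and replaces $\psi$ by some $\psi^{(m)}$, which again has zero mean by periodicity, so the integration-by-parts / splitting step still applies and still yields an extra $h_3^{-1/4}$; this accounts for the total $h_3^{-1/4-k/2}$ bound. Derivatives $\partial_{t_3}$ act only through $Q$, whose $t_3$-derivatives decay in $h_3$ by Lemma \ref{LQ1}, so they contribute no extra growth in $h_3$. The main obstacle is the inductive bookkeeping for $\partial_{h_3}^{k}$: one must verify that successive differentiations of $(\partial_\theta A)^{-1}$ and of $\partial_\theta^2 A$ never destroy the oscillatory $h_3^{-1/4}$ gain, which forces one to repeatedly invoke the special form (\ref{Q02}) to cancel the apparent singularities at the zeros of $\sin n\theta$.
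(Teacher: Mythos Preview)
Your approach is correct and takes a genuinely different route from the paper's. The paper Fourier-expands $\psi(x)=\sum_m(\psi_m^1\sin\frac{2m\pi x}{T}+\psi_m^2\cos\frac{2m\pi x}{T})$, then uses Lemma~\ref{LQ1} (and specifically (\ref{Q02})) to show that the phase $v(h_3,t_3,\theta)=\sqrt{2/n}\,u\cos n\theta$ has only finitely many \emph{nondegenerate} critical points in $[0,2\pi]$; Pan and Yu's stationary-phase lemma (Lemma~\ref{A03}) then yields $\bigl|\int e^{imh_3^{1/2}v}\,d\theta\bigr|\le Cm^{-1/2}h_3^{-1/4}$, and summing against $|\psi_m^i|\le m^{-\Upsilon_1-1}$ gives (\ref{pan01}). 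For (\ref{pan02}) the paper differentiates under the integral, writes each $\partial_{h_3}^k\partial_{t_3}^l\sin\frac{2m\pi}{T}x$ as $\varphi^1\sin+\varphi^2\cos$ with $|\varphi^i|\le Ch_3^{-k/2}$, and reapplies Pan--Yu. Your van~der~Corput splitting is more elementary: it avoids both the Fourier expansion and the asymptotic Lemma~\ref{A03}, and in fact only needs the crude bound $|\partial_\theta^2 A|\le Ch_3^{1/2}$ from (\ref{Q001}) rather than the refined decomposition (\ref{Q02}) --- your body integral $\int_{G_\delta}|\partial_\theta^2 A|/(\partial_\theta A)^2\,d\theta$ is already $O(h_3^{-1/2}\delta^{-1})$ via $\int_\delta^{\pi/2}\csc^2\phi\,d\phi\sim\delta^{-1}$, so (\ref{Q02}) is not as ``crucial'' here as you suggest (in the paper it is used only to verify $\partial_\theta^2 v(\theta^*)\ne 0$, which Pan--Yu requires). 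For (\ref{pan02}) your scheme also works: after Fa\`a di Bruno, each term is $F(\theta)\psi^{(m)}(A)$ with $|F|,|\partial_\theta F|\le Ch_3^{-k/2}$, and the extra contribution $\int_{G_\delta}|\partial_\theta F|/|\partial_\theta A|\,d\theta=O(h_3^{-k/2-1/2}\log\delta^{-1})$ is harmless. The paper's route gives sharper asymptotics and makes the $m$-bookkeeping explicit; yours is shorter and needs less regularity of~$\psi$.
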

\begin{proof}
Note that  $\psi(x)\in C^{{\Upsilon_1+1}}(\mathbb{R}/(T\mathbb{Z}))$ and $\int_0^T
\psi(x)dx=0$ by the assumption ($A_2$), it follows that
\begin{eqnarray}\label{psi-1}
   \psi(x)=\sum_{m=1}^{+\infty}(\psi_{m}^1\sin \frac{2m\pi}{T}x+\psi_{m}^2\cos \frac{2m\pi}{T}x),
   \end{eqnarray}
where the Fourier coefficients satisfy,   {integrated by parts},
\begin{eqnarray}\label{psi-2}
  \Big|\psi_{m}^1\Big|=\Big|\frac{2}{T}\int_0^T\psi(x)\sin \frac{2m\pi}{T}x dx\Big|\leq m^{-{\Upsilon_1}-1},
   \end{eqnarray}
   \begin{eqnarray}\label{psi-3}
   \Big|\psi_{m}^2\Big|=\Big|\frac{2}{T}\int_0^T\psi(x)\cos \frac{2m\pi}{T}x dx\Big|\leq
   m^{-{\Upsilon_1}-1}.
  \end{eqnarray}

For given $m$,
 consider the estimates on $$\int_0^{2\pi}\sin (\frac{2m\pi}{T}x)d\theta=\int_0^{2\pi}\sin
\Big(\frac{2m\pi}{T}
 \sqrt{\frac{2}{n}}h_3^{\frac{1}{2}}u(h_3,t_3,\theta)\cos{n\theta}\Big)d\theta$$
 with $u=(1+Q(h_3,t_3,\theta))^{\frac{1}{2}}$.
\vskip0.3cm
 \emph{ Step 1.}\quad Note that $\frac{1}{2}\leq\Big|u(h_3,t_3,\theta)\Big|<C$.
 And for $ k+j\leq\Upsilon_1,\ l\leq{\Upsilon_2}-1$,    $k+l+j\geq1,$ we have
 $$\Big|\partial^k_{h_3}\partial^l_{t_3}\partial^j_\theta u(h_3,t_3,\theta)\Big|\leq C{h_3}^{-\frac{1}{2}-\frac{k}{2}+\frac{1}{2}(max\{1,j\}-1)}
$$ by Leibniz's rule and
Lemma \ref{LQ1}.

Moreover, for $ k\leq\Upsilon_1,\ l\leq{\Upsilon_2}-1$ and
$k+l\geq1,$ it holds that
  \begin{equation}\label{pan03}
  \Big|\partial^k_{h_3}\partial^l_{t_3}\sin \frac{2m\pi}{T}x\Big|\leq C{h_3}^{-\frac{k}{2}}.
 \end{equation}
\vskip0.3cm
\emph{Step 2.}\quad Let
 $v(h_3,t_3,\theta)=\frac{2\pi}{T}\sqrt{\frac{2}{n}}u(h_3,t_3,\theta)\cos{n\theta},$
 then
 $$\partial_\theta v(h_3,t_3,\theta)=\frac{2\pi}{T}\sqrt{\frac{2}{n}}\{\partial_\theta
 u(h_3,t_3,\theta)\cos{n\theta}-nu(h_3,t_3,\theta)\sin{n\theta}\},$$
 $$\partial^2_\theta v(h_3,t_3,\theta)=\frac{2\pi}{T}\sqrt{\frac{2}{n}}\{\partial^2_\theta
 u(h_3,t_3,\theta)\cos{n\theta}-2n\partial_\theta u(h_3,t_3,\theta)\sin{n\theta}-n^2u(h_3,t_3,\theta)\cos{n\theta}\}.$$
 Note that
  $$\partial_\theta
 u(h_3,t_3,\theta)=\frac{1}{2}(1+Q)^{-\frac{1}{2}}\partial_\theta Q.$$
 For fixed $h_3,t_3$, assume  $\theta^*$ is a critical point of
 $v(h_3,t_3,\theta)$, i.e. $\partial_\theta v(h_3,t_3,\theta^*)=0$. Then from (\ref{Q001}), we have
 $\sin n\theta^*\rightarrow 0\ $ and $\cos n\theta^*\rightarrow 1\  as\ h_3\rightarrow\infty$. To prove that
 $\theta^*$ is an isolated critical point, we consider $$\partial^2_\theta
 u(h_3,t_3,\theta)=-\frac{1}{4}(1+Q)^{-\frac{3}{2}}(\partial_\theta Q)^2+\frac{1}{2}(1+Q)^{-\frac{1}{2}}\partial^2_\theta Q.$$
 Thus it follows from (\ref{Q02}) that
 $$\partial^2_\theta
 u(h_3,t_3,\theta)\cos{n\theta}=q_3(h_3,t_3,\theta)+q_4(h_3,t_3,\theta)\sin^2 n\theta
$$
with $|q_3|\leq Ch_3^{-\frac{1}{2}},\ |q_4|\leq C$.

 Therefore it shows that $\partial^2_\theta v(h_3,t_3,\theta^*)<-\frac{\sqrt{2}\pi}{T}n^{\frac{3}{2}}\neq0.$
 On the other hand, it is easy to see the existence of such critical points. In conclusion, we have shown that for given $(h_3,t_3)$, $v(h_3,t_3,\theta)$ has  finitely many isolated critical
 points in the interval $\theta\in[0,2\pi]$.
\vskip0.3cm
\emph{ Step 3.}\quad
  Without loss of generality, for given $(h_3,t_3)$, suppose  $[a,b]\subset[0,2\pi]$ is an interval where $a,\ b$
 are
 the only two critical points of $v(h_3,t_3,\theta)$.
 Following Pan and Yu's method (see Lemma \ref{A03} and Remark \ref{A03-1} in the Appendix),
 with $\lambda=\mu=1,\ \rho=\sigma=2,\ \nu=2,$
 we have, for $mh_3^{\frac{1}{2}}\gg1,$
 \begin{eqnarray}\label{sin-1}
   \nonumber  \int_a^be^{imh_3^{\frac{1}{2}}v(h_3,t_3,\theta)}d\theta&\sim&B_1(mh_3^{\frac{1}{2}})-A_1(mh_3^{\frac{1}{2}}),
 \end{eqnarray}
 where
 $$A_1(mh_3^{\frac{1}{2}})=-C_1e^{i(mh_3^{\frac{1}{2}}v(h_3,t_3,a)+{\frac{\pi}{4}})}m^{-\frac{1}{2}}h_3^{-\frac{1}{4}};$$
 $$B_1(mh_3^{\frac{1}{2}})=-C_2e^{i(mh_3^{\frac{1}{2}}v(h_3,t_3,b)+{\frac{\pi}{4}})}m^{-\frac{1}{2}}h_3^{-\frac{1}{4}}$$
with $C_1, C_2$ independent of $m$.

 Then we have
\begin{eqnarray}\label{psi-4}
 \Big |\int_a^b\sin(mh_3^{\frac{1}{2}}v(h_3,t_3,\theta))d\theta \Big|\leq Cm^{-\frac{1}{2}}h_3^{-\frac{1}{4}}.
 \end{eqnarray}

In the same way, we can prove
\begin{eqnarray}\label{psi-5}
  \Big |\int_a^b\cos(mh_3^{\frac{1}{2}}v(h_3,t_3,\theta))d\theta \Big|\leq Cm^{-\frac{1}{2}}h_3^{-\frac{1}{4}}.
 \end{eqnarray}

Together with (\ref{psi-1})-(\ref{psi-3}), (\ref{psi-4}) and (\ref{psi-5}), we obtain that
\begin{eqnarray}\label{}
 %c{h_3}^{-\frac{1}{4}}\leq
 \nonumber\Big|[\widetilde{f}_3](h_3, t_3)\Big|&=&\frac{1}{n}\Big|\int_0^{2\pi}\psi(x)d\theta\Big|\\
\nonumber&\leq&\frac{1}{n}\sum_{m=1}^{+\infty}\Big(|\psi_{m}^1|\cdot\Big|\int_0^{2\pi}\sin \frac{2m\pi}{T}xd\theta\Big|+|\psi_{m}^2|\cdot\Big|\int_0^{2\pi}\cos \frac{2m\pi}{T}xd\theta\Big|\Big)\\
   \nonumber &\leq& C\sum_{m=1}^{+\infty}m^{-{\Upsilon_1}-\frac{1}{2}}h_3^{-\frac{1}{4}}\leq C{h_3}^{-\frac{1}{4}}.
 \end{eqnarray}

 Hence (\ref{pan01}) is proved.
 \vskip0.3cm

To prove (\ref{pan02}), note that
\begin{equation}\label{}
%c{h_3}^{-\frac{1}{4}-\frac{k}{2}}\leq
  \nonumber I_{kl}=\partial^k_{h_3}\partial^l_{t_3}[\widetilde{f}_3](h_3,
t_3)=\int_0^{2\pi}\partial^k_{h_3}\partial^l_{t_3}f_3(h_3+\partial_{
t_2}S_3,\theta)d\theta
 \end{equation}
with
 $\partial^k_{h_3}\partial^l_{t_3}f_3(h_3+\partial_{
t_2}S_3,\theta)=\frac{1}{n}\partial^k_{h_3}\partial^l_{t_3}\psi
 (\sqrt{\frac{2}{n}}h_3^{\frac{1}{2}}u(h_3,t_3,\theta)\cos{n\theta})$.
From (\ref{psi-1}), it follows that
$$\partial^k_{h_3}\partial^l_{t_3}\psi
 (x)= \sum_{m=1}^{+\infty}(\psi_{m}^1\partial^k_{h_3}\partial^l_{t_3}\sin \frac{2m\pi}{T}x+\psi_{m}^2\partial^k_{h_3}\partial^l_{t_3}\cos \frac{2m\pi}{T}x)$$
 with $x=\sqrt{\frac{2}{n}}h_3^{\frac{1}{2}}u(h_3,t_3,\theta)\cos{n\theta}.$

 By Leibniz's rule, each term $\partial^k_{h_3}\partial^l_{t_3}\sin \frac{2m\pi}{T}x$ is of the form
$$
\varphi^1_{mkl}(h_3,t_3,\theta)\cdot\sin
 \frac{2m\pi}{T}x+
\varphi^2_{mkl}(h_3,t_3,\theta)\cdot\cos
 \frac{2m\pi}{T}x.$$
Form (\ref{pan03}),  for $ k\leq\Upsilon_1,\ l\leq{\Upsilon_2}-1$, it holds that
  \begin{equation}\label{}
   \nonumber  \Big|\varphi^i_{mkl}(h_3,t_3,\theta)\Big|\leq C{h_3}^{-\frac{k}{2}},\
  i=1,2.
 \end{equation}

  Let $\varphi^i(\theta)=\frac{1}{n}{h_3}^{\frac{k}{2}}\varphi^i_{mkl}(h_3,t_3,\theta)$ for $i=1, 2$,
 then
 \begin{eqnarray}\label{}
  \nonumber  I_{mkl}&=&\int_0^{2\pi}\partial^k_{h_3}\partial^l_{t_3}\sin \frac{2m\pi}{T}xd\theta\\
    \nonumber   &=&{h_3}^{-\frac{k}{2}}\int_0^{2\pi}\big(\varphi^1(\theta)\cdot\sin
 \frac{2m\pi}{T}x+\varphi^2(\theta)\cdot\cos
 \frac{2m\pi}{T}x\big)d\theta.
 \end{eqnarray}
 Then repeating Step 1,\ Step 2  \& Step 3 above,  with the help of Lemma \ref{A03},
 we obtain (\ref{pan02}).
 \end{proof}

 %Without
 %%loss of generality assume that
% $\partial_{\theta}v(h_3,t_3,\theta)=(\theta-a)v_1(h_3,t_3,\theta)$ and $v_1(h_3,t_3,\theta)>0,\ \theta\in[a,b].$

%Let
% \begin{eqnarray}\label{sin}
%   \nonumber \xi^2&=&v(h_3,t_3,\theta)-v(h_3,t_3,a)\\
%     \nonumber&=&\int_a^\theta \partial_\theta v(h_3,t_3,\tau)d\tau\\
%         &=&\int_a^\theta (\tau-a)v_1(h_3,t_3,\tau)d\tau
% \end{eqnarray}
% Let $\tau=a+(\theta-a)\varsigma$, then
% %\begin{eqnarray}\label{}
 %  \nonumber \xi^2&=& (\theta-a)^2\int_0^1 \varsigma v_1(h_3,t_3,a+(\theta-a)\varsigma)d\varsigma.
 %\end{eqnarray}
% The integral is positive, and $v(h_3,t_3,\theta)$ is increasing with $\theta$($\partial_{\theta}v(h_3,t_3,\theta)>0$), thus it
% has inverse $\mathcal{C}^2$ function
 %\begin{eqnarray}\label{}
%   \nonumber \theta&=& \psi(\xi),\ 0\leq\xi\leq \xi_1
% \end{eqnarray}

% Consider the integral
 % \begin{eqnarray}\label{sin-1}
 % \int_a^be^{ih_3^{\frac{1}{2}}v(h_3,t_3,\theta)}d\theta&=&e^{ih_3^{\frac{1}{2}}v(h_3,t_3,a)}\int_0^{\xi_1}\psi'(\xi)e^{ih_3^{\frac{1}{2}}\xi^2}d\xi
 %   \end{eqnarray}

 % Denote $g_{-1}(\xi)=-\int_{\xi}^\infty
 % e^{ih_3^{\frac{1}{2}}z^2}dz$, the integral curve is $z=\xi+re^{i\frac{\pi}{4}},\ 0\leq r<\infty.$
%Thus the integral
 % \begin{eqnarray}\label{sin-2}
 %  \nonumber
 %  (\ref{sin-1})&=&e^{ih_3^{\frac{1}{2}}v(h_3,t_3,a)}\psi'(\xi)g_{-1}(\xi)\Big|_0^{\xi_1}-e^{ih_3^{\frac{1}{2}}v(h_3,t_3,a)}\int_0^{\xi_1}g_{-1}(\xi)\psi''(\xi) d\xi\\
 %   \end{eqnarray}
%\begin{remark}
%The periodic property of $\psi(x)$ with $\int_0^T\psi(x)dx=0$
%makes  the important role in lemma \ref{L10} when $h_3$ is large
%enough.
%\end{remark}

 Now  we make a
transformation $\Phi_4:(h_4,t_4,\theta)\rightarrow(h_3,t_3,\theta)$ implicitly
given
 by
    \begin{equation}\label{T04}\left\{ \begin{array}{l}
     \nonumber   h_3=h_4+\partial_{ t_3}S_4(h_4,t_3,\theta)\\
    \nonumber     t_4\ =t_3+\partial_{ h_4}S_4(h_4,t_3,\theta)
     \end{array}\right.
 \end{equation}
 with the generating function $S_4(h_4,t_3,\theta)$ determined by
\begin{equation}\label{S04}
  \nonumber  S_4(h_4,t_3,\theta)=\int_0^\theta(\widetilde{f}_3(h_4, t_3,\theta) -[\widetilde{f}_3](h_4, t_3))d\theta.
\end{equation}
Under $\Phi_4$, the Hamiltonian $I_3$ is transformed into $I_4$ as
following
 { {\begin{eqnarray}\label{I04-2}
   \nonumber I_4(h_4,t_4,\theta)&=&\alpha(h_4+\partial_{ t_3}S_4,t_4-\partial_{ h_4}S_4)-\widetilde{f}_3(h_4+\partial_{ t_3}S_4,t_3,\theta)\\
     \nonumber&&+R_{31}(h_4+\partial_{ t_3}S_4,t_4-\partial_{ h_4}S_4,\theta)+R_{32}(h_4+\partial_{ t_3}S_4,t_4-\partial_{ h_4}S_4,\theta)+\partial_{\theta}S_4\\
      \nonumber     &=&\alpha(h_4,t_4)-[\widetilde{f}_3](h_4,t_4)
     +R_{41}(h_4,t_4,\theta)+R_{42}(h_4,t_4,\theta)+R_{43}(h_4,t_4,\theta),
 \end{eqnarray}
where
 \begin{eqnarray}\label{R041}
    \nonumber  R_{41}(h_4,t_4,\theta)&=&R_{31}(h_4,t_4,\theta);\quad\quad\quad\quad\quad\quad\quad\quad\quad\quad\quad\quad\quad\quad\quad
 \end{eqnarray}
 \begin{eqnarray}\label{R042}
   \nonumber  R_{42}(h_4,t_4,\theta)&=&\int_0^1\partial_I\alpha(h_4+\mu\partial_{ t_3}S_4,t_3)\partial_{t_3}S_4(h_4,t_3,\theta)d\mu\\
          \nonumber  && -\int_0^1\int_0^1\partial^2_t\alpha(h_4,t_4-s\mu\partial_{
          h_4}S_4)\mu(\partial_{h_4}S_4)^2dsd\mu\\
           \nonumber  &&-\int_0^1\partial_{h_3}\widetilde{f}_3(h_4+\mu\partial_{ t_3}S_4,t_3,\theta)\partial_{t_3}S_4(h_4,t_3,\theta)d\mu\\
    \nonumber &&+\int_0^1\partial_{t_3}[\widetilde{f}_3](h_4,t_4-\mu\partial_{ h_4}S_4)\partial_{h_4}S_4d\mu\\
       \nonumber  &&+\int_0^1\partial_{h_3} R_{31}(h_4+\mu\partial_{ t_3}S_4,t_3,\theta)\partial_{t_3}S_4(h_4,t_3,\theta)d\mu\\
     \nonumber &&-\int_0^1\partial_{t_3} R_{31}(h_4,t_4-\mu\partial_{ h_4}S_4,\theta)\partial_{h_4}S_4d\mu\\
    \nonumber    &&+R_{32}(h_4+\partial_{ t_3}S_4,t_4-\partial_{
     h_4}S_4,\theta);
 \end{eqnarray}
  \begin{eqnarray}\label{R043}
     R_{43}(h_4,t_4,\theta)&=&
     -\partial_t\alpha(h_4,t_4)\cdot\partial_{h_4} S_4.\quad\quad\quad\quad\quad\quad\quad\quad\quad\quad\quad\quad
     \end{eqnarray}
 }

We have the following estimates.
\begin{lemma}\label{L11}
 For $h_4$ large enough, $\theta,\ t_4\in \mathbb{S}^1$ and $l\leq{\Upsilon_2}-1,$ it holds that
 $$\Big|\partial^k_{h_4}\partial^l_{t_3} S_4(h_4,t_3,\theta)\Big|\leq Ch_4^{-\frac{1}{4}-\frac{k}{2}},\  k+j\leq\Upsilon_1,$$
 $$ \Big|\partial^k_{h_4}\partial^l_{t_3}\partial^j_\theta S_4(h_4,t_3,\theta)\Big|\leq Ch_4^{-\frac{k}{2}+\frac{j-1}{2}},\ j\geq1,\  k+j\leq\Upsilon_1;$$
and for $l\leq{\Upsilon_2}-2$,
 $$ \Big|\partial^k_{h_4}\partial^l_{t_4}\partial^j_\theta R_{41}\Big|\leq C{h_4}^{-k+\frac{1}{2}(max\{1,j\}-1)},\  k+j\leq\Upsilon_1-1 ;$$
  $$  \Big|\partial^k_{h_4}\partial^l_{t_4}\partial^j_\theta R_{42}\Big|\leq C{h_4}^{-\frac{1}{2}-\frac{k}{2}+\frac{1}{2}(max\{1,j\}-1)},\  k+j\leq\Upsilon_1-2;$$
     $$ \Big|\partial^k_{h_4}\partial^l_{t_4} R_{43}\Big|\leq C{h_4}^{-\frac{1}{4}-\frac{k}{2}},\ \Big|\partial^k_{h_4}\partial^l_{t_4}\partial^j_\theta R_{43}\Big|\leq C{h_4}^{-\frac{k}{2}+\frac{j-1}{2}},\  k+j\leq\Upsilon_1-2.$$
    \end{lemma}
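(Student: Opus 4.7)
The plan is to settle Lemma \ref{L11} in three stages: the bounds on the generating function $S_4$ first, then the trivial inheritance $R_{41}=R_{31}$, and finally the Leibniz-rule estimates for $R_{42}$ and $R_{43}$.

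For $S_4(h_4,t_3,\theta)=\int_0^\theta(\widetilde{f}_3(h_4,t_3,s)-[\widetilde{f}_3](h_4,t_3))\,ds$, I split on whether $j\geq 1$ or $j=0$. When $j\geq 1$, the average disappears upon one $\theta$-differentiation and $\partial^j_\theta S_4=\partial^{j-1}_\theta\widetilde{f}_3$; the mixed bound $h_4^{-k/2+(j-1)/2}$ then follows from Lemma \ref{L04} combined with Lemma \ref{LQ1} (applied to the composition $\widetilde{f}_3=\frac{1}{n}\psi(x)$ with $x=\sqrt{2/n}\,h_4^{1/2}(1+Q)^{1/2}\cos n\theta$) via Leibniz's rule. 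The $j=0$ case is more delicate, since supremum bounds on $\widetilde{f}_3$ give only $O(1)$: the term $\theta[\widetilde{f}_3]$ is controlled directly by Lemma \ref{L10}, while for $\int_0^\theta\widetilde{f}_3\,ds$ I mirror the proof of Lemma \ref{L10}: expand $\psi$ in Fourier series, and for each mode apply Lemma \ref{A03} to the resulting oscillatory integral $\int_0^\theta\sin(m h_4^{1/2}v)\,ds$ (and its cosine analogue), obtaining an $O(m^{-1/2}h_4^{-1/4})$ bound per mode that sums against the rapidly decaying Fourier coefficients. Differentiating in $h_4$ or $t_3$ either falls on the slowly varying amplitude $v$ or produces additional oscillatory factors whose size is absorbed by $h_4^{-k/2}$; the same stationary-phase analysis then yields the full claim $|\partial^k_{h_4}\partial^l_{t_3}S_4|\leq Ch_4^{-1/4-k/2}$.

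The estimate for $R_{41}$ is immediate from Lemma \ref{L09}. For $R_{42}$, I apply Leibniz's rule to each of the seven constituent integrals, combining the estimates on $\alpha$, $\widetilde{f}_3$, $[\widetilde{f}_3]$, $R_{31}$ and $R_{32}$ from Lemmas \ref{L09}, \ref{L04} and \ref{L10} with the $S_4$ estimates just obtained. The dominant scalings come from $(\partial_I\alpha)\cdot(\partial_{t_3}S_4)$ and $(\partial_{h_3}\widetilde{f}_3)\cdot(\partial_{t_3}S_4)$, both bounded at top order by $Ch_4^{-1/2}\cdot h_4^{-1/4}=Ch_4^{-3/4}$, which fits within the stated exponent $-1/2-k/2+(\max\{1,j\}-1)/2$; the $R_{32}$ contribution is inherited directly from Lemma \ref{L09}. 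For $R_{43}=-\partial_t\alpha\cdot\partial_{h_4}S_4$, I combine $|\partial_t\alpha|\leq Ch_4^{1/2}$ from Lemma \ref{L09} with $|\partial_{h_4}S_4|\leq Ch_4^{-3/4}$ to obtain $|R_{43}|\leq Ch_4^{-1/4}$; the two-case form of the claimed bound mirrors precisely the two-case form of the $S_4$ estimate, and higher derivatives are handled analogously by Leibniz.

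The genuine obstacle is the $j=0$ estimate on $S_4$: naive pointwise bounds give only $O(1)$, and it is only the stationary-phase/Van der Corput analysis (borrowed from the proof of Lemma \ref{L10}) that produces the required $h_4^{-1/4}$ decay. Without this refinement the propagation through $R_{43}$ would be too weak to eventually reduce the system to the nearly integrable form required by Moser's twist theorem. Every other step reduces to careful applications of the Leibniz and chain rules combined with the estimates already established in Sections \ref{s2} and \ref{s3}.
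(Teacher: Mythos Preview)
Your proposal is correct and follows essentially the same approach as the paper. The paper's proof is terse---it says only that the $S_4$ bounds ``are obtained from Lemma \ref{L10}'' and that $R_{41},R_{42}$ are handled ``similar to those in Lemma \ref{L08}'' with $R_{43}$ coming ``directly from (\ref{R043})''---but this is precisely what you unpack: stationary-phase/Pan--Yu estimates for the $j=0$ case of $S_4$, direct differentiation for $j\ge 1$, Leibniz-rule bookkeeping for $R_{42}$, and the product structure of $R_{43}=-\partial_t\alpha\cdot\partial_{h_4}S_4$ inheriting the two-case form of the $S_4$ bound.
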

    \begin{proof} The estimates on $R_{41},\ R_{42}$ are similar to those in Lemma \ref{L08}.
    Note that
    \begin{equation}\label{}
  \nonumber |  S_4(h_4,t_3,\theta)|=\Big|\int_0^\theta(\widetilde{f}_3(h_4,
t_3,\theta) -[\widetilde{f}_3](h_4, t_3))d\theta\Big|,
\end{equation}
and
   \begin{equation}\label{}
  \nonumber |\partial_\theta S_4(h_4,t_3,\theta)|\leq C|\partial_\theta\widetilde{f}_3(h_4, t_3,\theta)|,
\end{equation}
thus the estimates on $S_4$ are obtained from Lemma \ref{L10}.
Finally, the estimates on $R_{43}$ can be obtained directly from
(\ref{R043}).\end{proof}

%From Lemmas \ref{L10} and \ref{L11}, we see that the estimates on
% $R_{43}$ is worse than those on $[\widetilde{f}_3]$ and $R_{42}.$
The oscillating terms in $I_4$ include $-[\widetilde{f}_3]$, $R_{42}$ and $R_{43}$ {while the worst term among them is $R_{43}$}. For simplicity,
without causing confusion, we still denote the sum
$-[\widetilde{f}_3]+R_{42}+R_{43}$ by $R_{43}$, i.e.
\begin{eqnarray}\label{I04-1}
  \nonumber  I_4(h_4,t_4,\theta)&=&\alpha(h_4,t_4)+R_{41}(h_4,t_4,\theta)+R_{43}(h_4,t_4,\theta).
 \end{eqnarray}
    \vskip1cm

\subsection{A canonical transformation for $R_{41}$}\label{4.2}
 Before dealing with the oscillating term $R_{43}$, we first reduce the non-oscillating term $R_{41}$ to be small enough
  by a canonical transformation.

Let $\Phi_5:(h_5,t_5,\theta)\rightarrow(h_4,t_4,\theta)$ be implicitly given by
    \begin{equation}\label{T05}\left\{ \begin{array}{l}
    \nonumber   h_4=h_5+\partial_{ t_4}S_5(h_5,t_4,\theta)\\
    \nonumber    t_5\ =t_4+\partial_{ h_5}S_5(h_5,t_4,\theta)
     \end{array}\right.
 \end{equation}
 with the generating function $S_5(h_5,t_4,\theta)$ determined by
\begin{equation}\label{S05}
   \nonumber S_5(h_5,t_4,\theta)=-\int_0^\theta(R_{41}(h_5, t_4,\theta) -[R_{41}](h_5, t_4))d\theta.
\end{equation}
Under $\Phi_5$, the Hamiltonian $I_4$ is transformed into $I_5$ as
following
 { \begin{eqnarray}\label{I05-2}
   \nonumber I_5(h_5,t_5,\theta)&=&\alpha(h_5+\partial_{ t_4}S_5,t_5-\partial_{ h_5}S_5)+R_{43}(h_5+\partial_{ t_4}S_5,t_5-\partial_{ h_5}S_5,\theta)\\
      \nonumber&&+R_{41}(h_5+\partial_{ t_4}S_5,t_4,\theta)+\partial_{\theta}S_5\\
                 &=&\alpha(h_5,t_5)+[R_{41}](h_5,t_5)
    +R_{5}(h_5,t_5,\theta),
 \end{eqnarray}
where
  \begin{eqnarray}\label{R05}
   \nonumber  R_{5}(h_5,t_5,\theta)&=&\int_0^1\partial_I\alpha(h_5+\mu\partial_{ t_4}S_5,t_4)\cdot\partial_{t_4}S_5d\mu-\int_0^1\partial_t\alpha(h_5,t_5-\mu\partial_{ h_5}S_5)\cdot\partial_{h_5}S_5d\mu\\
     \nonumber &&+\int_0^1\partial_{t_4}[R_{41}](h_5,t_5-\mu\partial_{ h_5}S_5)\cdot\partial_{h_5}S_5d\mu+R_{43}(h_5+\partial_{ t_4}S_5,t_5-\partial_{ h_5}S_5,\theta).
 \end{eqnarray}}

We have the following estimates:
\begin{lemma}\label{L12}
 For $h_5$ large enough, $\theta, \ t_5\in \mathbb{S}^1$,  we have the estimates on $S_5(h_5,t_4,\theta)$, $[R_{41}](h_5,t_5)$,  and $R_{5}(h_5,t_5,\theta)$ as following: for $ k+j\leq\Upsilon_1-1,\ l\leq{\Upsilon_2}-2,$
 $$\Big|\partial^k_{h_5}\partial^l_{t_4}\partial^j_\theta S_5(h_5,t_4,\theta)\Big|\leq Ch_{5}^{-k+\frac{1}{2}(max\{2,j\}-2)},$$
 $$ \Big|\partial^k_{h_5}\partial^l_{t_5} [R_{41}](h_5,t_5)\Big|\leq C{h_5}^{ {-k}};$$
   and for $ k+j\leq\Upsilon_1-2,\ l\leq{\Upsilon_2}-3,$
$$\Big|\partial^k_{h_5}\partial^l_{t_5} R_{5}\Big|\leq C{h_5}^{-\frac{1}{4}-\frac{k}{2}},$$
     $$\ \Big|\partial^k_{h_5}\partial^l_{t_5}\partial^j_\theta R_{5}\Big|\leq C{h_5}^{-\frac{k}{2}+\frac{j-1}{2}},\ j\geq1.$$
     \end{lemma}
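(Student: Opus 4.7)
The plan is to establish the three families of bounds separately, each reducing to the Lemma 4.1 estimates on $R_{41}$ and $R_{43}$ together with the Lemma 3.5 estimates on $\alpha$, via Leibniz and the chain rule. I would proceed in the natural order dictated by the formula: first bound $S_5$ from its explicit definition, then deduce the bound on $[R_{41}]$ by differentiating under the averaging integral, and finally bound $R_5$ term by term.

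For $S_5(h_5,t_4,\theta) = -\int_0^\theta\bigl(R_{41}(h_5,t_4,\theta)-[R_{41}](h_5,t_4)\bigr)\,d\theta$, derivatives in $h_5$ and $t_4$ pass under the integral, while $\partial_\theta$ simply cancels the integration. For $j=0$ the integrand is $O(h_5^{-k})$ by Lemma 4.1, giving the exponent $-k+\tfrac12(\max\{2,0\}-2)=-k$; for $j\ge 1$ one has $\partial_\theta^j S_5=-\partial_\theta^{j-1}(R_{41}-[R_{41}])$, and the Lemma 4.1 bound for $\partial_\theta^{j-1}R_{41}$ yields the exponent $-k+\tfrac12(\max\{1,j-1\}-1)=-k+\tfrac12(\max\{2,j\}-2)$. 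The restrictions $k+j\le\Upsilon_1-1$ and $l\le\Upsilon_2-2$ are exactly those inherited from $R_{41}$.

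For the average $[R_{41}](h_5,t_5)=\frac{1}{2\pi}\int_0^{2\pi}R_{41}(h_5,t_5,\theta)\,d\theta$, derivatives commute with the $\theta$-average and the $j=0$ case of Lemma 4.1 immediately gives $|\partial_{h_5}^k\partial_{t_5}^l[R_{41}]|\le Ch_5^{-k}$. The bulk of the work lies in the bound on $R_5$. First I would verify, as in Lemma 3.6, that $\Phi_5$ is a near-identity change of variables for $h_5$ large, with $\partial_{h_5}h_4,\partial_{t_5}t_4=1+O(h_5^{-1})$ and higher mixed derivatives of $h_4,t_4$ obeying the same schedule as in Lemma 3.6. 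Then, term by term: the two $\alpha$-integrals are $O(h_5^{-1/2})$ using $|\partial_I\alpha|\le Ch_5^{-1/2}$ and $|\partial_t\alpha|\le Ch_5^{1/2}$ paired with the $S_5$-bounds, the $\partial_{t_4}[R_{41}]\cdot\partial_{h_5}S_5$ contribution is $O(h_5^{-1})$, and the composite $R_{43}\bigl(h_5+\partial_{t_4}S_5,\,t_5-\partial_{h_5}S_5,\,\theta\bigr)$ dominates at $O(h_5^{-1/4})$ by Lemma 4.1 together with the near-identity bounds. The $\theta$-derivative estimate for $j\ge 1$ follows from Faà di Bruno applied to this composite, combining $|\partial_\theta^j R_{43}|\le Ch_5^{(j-1)/2}$ with the $S_5$-schedule, while the loss from $\Upsilon_2-2$ to $\Upsilon_2-3$ reflects the single $t$-derivative consumed inside $R_5$.

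The main obstacle is the chain-rule bookkeeping for the composite $R_{43}$-term when $j$ is large: neither $R_{43}$ nor $\partial_\theta S_5$ decays under $\theta$-differentiation (they grow as $h_5^{(j-1)/2}$ and $h_5^{(j-2)/2}$ respectively), so every mixed monomial in the Faà di Bruno expansion must be weighed against the target exponent $\tfrac{j-1}{2}-\tfrac{k}{2}$. A careful degree count shows that the worst contribution is produced by concentrating all $\theta$-derivatives on a single factor, which is exactly matched by Lemma 4.1; once this balance is verified, the rest is routine and proceeds in the same style as Lemmas 3.4, 3.6 and 4.1.
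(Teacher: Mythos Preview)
Your proposal is correct and follows essentially the same approach as the paper: the paper's proof simply says ``Following Lemma~\ref{L11} and similar to the proof of Lemma~\ref{L08}, the estimates are obtained by a direct computation,'' which is precisely the Leibniz/chain-rule bookkeeping you outline. Your observation that the composite $R_{43}$-term dominates $R_5$ and dictates the exponent $-\tfrac14-\tfrac{k}{2}$ (respectively $\tfrac{j-1}{2}-\tfrac{k}{2}$ for $j\ge 1$) is exactly the content behind that one-line proof.
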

    \begin{proof} Following Lemma \ref{L11} and similar to the proof of Lemma \ref{L08}, the estimates are obtained by a direct computation. \end{proof}

 Without  causing confusion, $\alpha(h_5,t_5)+[R_{41}](h_5,t_5)$ is still denoted by $\alpha(h_5,t_5)$, therefore (\ref{I05-2}) is rewritten as
 \begin{eqnarray}\label{I05-1}
    I_5(h_5,t_5,\theta)&=&\alpha(h_5,t_5)
    +R_{5}(h_5,t_5,\theta)
 \end{eqnarray}
 with
    \begin{eqnarray}\label{alpha02}
     c{h_5}^{\frac{1}{2}-k}\leq \Big|\partial^k_{h_5}\alpha(h_5,t_5)\Big|\leq C{h_5}^{\frac{1}{2}-k},\  k=0,1,2,
     \end{eqnarray}
  \begin{eqnarray}\label{alpha03}
   \Big|\partial^k_{h_5}\partial^l_{t_5}\alpha(h_5,t_5)\Big|\leq C{h_5}^{\frac{1}{2}-k},\  k\leq\Upsilon_1-1,\ l\leq{\Upsilon_2}-2. \end{eqnarray}

   \subsection{The improvement of estimates on derivatives of the oscillating terms}\label{4.3}
In the remain part of this section, we will deal with the oscillating term $R_5$. By intuition, it seems plausible to reduce the perturbation to be small enough by repeating the procedure in Subsection \ref{s4.1}. However, this method does not work because of the poor estimates on derivatives of the perturbation. Fortunately it can improve
the poor estimates with respect to $\theta$. This will help us to obtain a nearly integrable superlinear system in Subsection \ref{4.5}.
\begin{lemma}\label{P5s}
Given $\nu\in\mathbb{Z}^+$, there exists a  transformation
 $\Phi_{6,\nu}:(h_6,\ t_6,\ \theta)\rightarrow(h_5,\ t_5,\ \theta)$, such that
\begin{eqnarray}\label{I06}
   \nonumber
   I_6(h_6,\ t_6,\ \theta)&=&I_5\circ\Phi_{6,\nu}(h_6,\ t_6,\ \theta)\\
  &=& \alpha(h_6,\ t_6)
    +R_{6}(h_6,\ t_6,\ \theta),
 \end{eqnarray}
and for $h_6$ large enough, $\theta,\ t_6\in \mathbb{S}^1$,
$l\leq{\Upsilon_2}-\nu-3$, $ k+j\leq\Upsilon_1-\nu-2$, it holds that
 %c{h_3}^{-\frac{1}{4}}\leq

 \begin{eqnarray}\label{R06}
    \Big|\partial^k_{h_6}\partial^l_{t_6}\partial^j_\theta R_{6}\Big|\leq Ch_6^{-\frac{1}{4}-\frac{k}{2}},\ j=0,1,\ldots,\nu.
 \end{eqnarray}
     \end{lemma}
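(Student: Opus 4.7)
The proof is by induction on $\nu$, with each step effected by a canonical transformation of generating-function type that, starting from a remainder with $\theta$-derivatives controlled up to some order, converts the leading oscillatory piece into a $\theta$-independent average and thereby buys one extra order of $\theta$-regularity.

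For $\nu=0$, the conclusion holds with $\Phi_{6,0}=\mathrm{id}$ directly from Lemma \ref{L12}, since $|\partial_{h_5}^k\partial_{t_5}^l R_5|\le C h_5^{-1/4-k/2}$ gives exactly the $j=0$ case. Assume inductively that $\Phi_{6,\nu-1}$ has already been constructed so that $I_5\circ\Phi_{6,\nu-1}=\alpha(h,t)+\widetilde R_{\nu-1}(h,t,\theta)$ satisfies $|\partial_h^k\partial_t^l\partial_\theta^j \widetilde R_{\nu-1}|\le C h^{-1/4-k/2}$ for $j=0,1,\ldots,\nu-1$. Define the generating function
$$S^{(\nu)}(h,t,\theta):=\int_0^\theta \bigl(\widetilde R_{\nu-1}(h,t,\vartheta)-[\widetilde R_{\nu-1}](h,t)\bigr)\,d\vartheta,$$
and the canonical transformation $\Psi_\nu:(h_6,t_6,\theta)\mapsto(h,t,\theta)$ implicitly by $h=h_6+\partial_t S^{(\nu)}(h_6,t,\theta)$ and $t_6=t+\partial_{h_6}S^{(\nu)}(h_6,t,\theta)$, and set $\Phi_{6,\nu}:=\Phi_{6,\nu-1}\circ\Psi_\nu$. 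Expanding the transformed Hamiltonian via Taylor's formula exactly as in Subsection \ref{s4.1}, and using $\partial_\theta S^{(\nu)}=\widetilde R_{\nu-1}-[\widetilde R_{\nu-1}]$ to cancel the oscillatory part, yields
$$I_6=\alpha(h_6,t_6)+[\widetilde R_{\nu-1}](h_6,t_6)+\mathcal{E}(h_6,t_6,\theta),$$
where $\mathcal{E}$ collects the standard Taylor remainders. The $\theta$-independent correction $[\widetilde R_{\nu-1}]$ is of size $h^{-1/4}$ and can be absorbed into the new $\alpha$ without affecting (\ref{alpha02})--(\ref{alpha03}); the remaining piece is what I will call $R_6$.

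The crucial gain is that $\partial_\theta S^{(\nu)}=\widetilde R_{\nu-1}-[\widetilde R_{\nu-1}]$, so by the induction hypothesis $|\partial_h^k\partial_t^l\partial_\theta^j S^{(\nu)}|\le C h^{-1/4-k/2}$ for every $j=0,1,\ldots,\nu$ (one $\theta$-derivative is already built into $S^{(\nu)}$). Every summand of $\mathcal{E}$ is a product of factors of the form $\partial_h^k\partial_t^l S^{(\nu)}$, $\partial_h^k\partial_t^l\partial_\theta^j \widetilde R_{\nu-1}$ with $j\le\nu-1$, and derivatives of $\alpha$; each such summand therefore inherits controlled $\theta$-derivatives up to order $\nu$ with the target bound $Ch^{-1/4-k/2}$. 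The implicit inversion defining $\Psi_\nu$ is handled by Banach iteration exactly as in the proofs of Lemmas \ref{L08}--\ref{L09}, valid once $h_6$ is large.

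The main obstacle is the tightness of the budget: the worst remainder is $\partial_t\alpha\cdot\partial_{h_6}S^{(\nu)}$, of order $h^{1/2}\cdot h^{-3/4}=h^{-1/4}$, which already saturates the target bound. Any additional factor of $h^{1/2}$ introduced by differentiating the transformation or by Taylor remainders must be balanced by a corresponding $h^{-1/2}$ coming either from $\partial_h\alpha$ or from higher $h$-derivatives of $S^{(\nu)}$; this forces careful book-keeping throughout the Leibniz expansion. Each iteration costs one order of $t$-smoothness and a joint order in $h$ and $\theta$, which accounts for the ranges $l\le\Upsilon_2-\nu-3$ and $k+j\le\Upsilon_1-\nu-2$ stated in the lemma.
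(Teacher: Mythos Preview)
Your proposal is correct and follows essentially the same approach as the paper: both prove an iteration lemma (the paper's Lemma~\ref{P53-1}) that gains one order of $\theta$-regularity per step via the generating function $S=\int_0^\theta(R-[R])\,d\vartheta$, identify $\partial_t\alpha\cdot\partial_h S\sim h^{1/2}\cdot h^{-3/4}$ as the worst term saturating the $h^{-1/4}$ bound, and compose $\nu$ such transformations at the cost of one order of smoothness in $t$ and in $(h,\theta)$ per step. The only cosmetic differences are a sign in your $S^{(\nu)}$ (the paper takes $-\int_0^\theta(\cdot)$ so that $\partial_\theta S+R=[R]$) and that the paper keeps $[R]$ inside $R_+$ rather than absorbing it into $\alpha$; neither affects the argument.
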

     \begin{remark}\label{}
    Lemma \ref{P5s} shows that with the cost of {reducing the} smoothness on $t$, the smoothness of
the perturbation on $\theta$ and the corresponding estimate can be improved.
 \end{remark}
\begin{proof}
To prove lemma \ref{P5s}, {we give the following iteration lemma firstly}.
\begin{lemma}\label{P53-1}
Assume   Hamiltonian
$$
I=\alpha(h, t)
    +R(h,t,\theta)
$$
with $\alpha$ defined {in (\ref{I05-1})} and $R(h,t,\theta)$ satisfying that
for $h$ large enough, $\theta,\ t\in
\mathbb{S}^1$,\ $l\leq{\Upsilon_2}-i-3$, $ k+j\leq\Upsilon_1-i-2$,
$$\Big|\partial^k_{h}\partial^l_{t}\partial^j_\theta R\Big|\leq Ch^{-\frac{1}{4}-\frac{k}{2}},\ j=0,\ 1,\ \cdots,\ i.$$
Then there exists a  transformation
 $\Phi_{+}:(h_+,\ t_+,\ \theta)\rightarrow(h,\ t,\ \theta)$, such that
\begin{eqnarray}\label{}
   \nonumber
   I_{+}(h_{+},t_{+},\theta)&=&I\circ\Phi_{+}(h_,\ t,\ \theta)\\
    \nonumber &=& \alpha(h_{+},\ t_{+})
    +R_{+}(h_{+},\ t_{+},\ \theta).
 \end{eqnarray}
Moreover for $h_{+}\gg 1$, $\theta,\ t_{+}\in
\mathbb{S}^1$,\ $l\leq{\Upsilon_2}-(i+1)-3$, $ k+j\leq\Upsilon_1-(i+1)-2$, it holds that
$$\Big|\partial^k_{h_+}\partial^l_{t_+}\partial^j_\theta R_+\Big|\leq Ch_+^{-\frac{1}{4}-\frac{k}{2}},\ j=0,\ 1,\ \cdots, \ i+1.$$
     \end{lemma}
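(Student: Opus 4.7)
The strategy is to perform a single KAM-style step that trades one order of $t$-smoothness and one order in the combined $(h,\theta)$-order for one extra order of $\theta$-smoothness in the perturbation. I will take the generating function
\[
S(h_+, t, \theta) = \int_0^\theta \bigl(R(h_+, t, \tau) - [R](h_+, t)\bigr)\, d\tau,
\]
and define the canonical map $\Phi_+:(h_+, t_+, \theta) \to (h, t, \theta)$ implicitly by
\[
h = h_+ + \partial_t S(h_+, t, \theta), \qquad t_+ = t + \partial_{h_+} S(h_+, t, \theta),
\]
in direct analogy with $\Phi_2$ and $\Phi_3$ of Section~\ref{s3}. From the induction hypothesis, $S$ inherits the same pointwise size as $R$ together with one free $\theta$-derivative, and the differential of $\Phi_+$ is $O(h_+^{-1/4})$-close to the identity, so $\Phi_+$ is a well-defined symplectomorphism for $h_+$ sufficiently large and its inverse is controlled by a Neumann-type expansion.

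Next I will expand $I_+ = I\circ\Phi_+ + \partial_\theta S$ by Taylor's formula about $(h_+, t_+)$. By construction $\partial_\theta S = R - [R]$, which cancels the $\theta$-fluctuating leading piece of $R$, leaving
\[
I_+ = \alpha(h_+, t_+) + [R](h_+, t_+) + R_+(h_+, t_+, \theta),
\]
where $R_+$ is a sum of first-order Taylor remainders of the schematic types $\partial_h^2\alpha\cdot(\partial_t S)^2$, $\partial_t\alpha\cdot\partial_{h_+}S$, $\partial_hR\cdot\partial_t S$, and $\partial_tR\cdot\partial_{h_+}S$, all written as $\mu$-integrated expressions along the transformation. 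Because $[R]$ is $\theta$-independent and satisfies $|\partial_{h_+}^k[R]|\leq Ch_+^{-1/4-k/2}$, it can be absorbed into $\alpha$ without spoiling the two-sided bounds (\ref{alpha02})--(\ref{alpha03}) for $h_+$ large, so that the redefined $\alpha$ still meets the hypothesis of the iteration.

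The main obstacle is verifying the improved bound $|\partial_{h_+}^k\partial_{t_+}^l\partial_\theta^j R_+|\leq Ch_+^{-1/4-k/2}$ over the full range $j \leq i+1$. For $j \leq i$ each term of $R_+$ is estimated by the Leibniz rule, the inductive estimates on $R$, and the $\alpha$-estimates from (\ref{alpha02})--(\ref{alpha03}). The sensitive case is $j = i+1$: whenever $\partial_\theta$ falls on an $S$ factor, $\partial_\theta^{i+1}S$ reduces to $\partial_\theta^i(R - [R])$, which is covered by the hypothesis at order $i$; when $\partial_\theta$ falls on an $R$ factor evaluated at a $\mu$-shifted argument $h_+ + \mu\partial_t S$, the chain rule produces lower-order $\theta$-derivatives of $R$ multiplied by further $\partial_\theta S = R - [R]$ factors, which again close inside the hypothesis. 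Finally, converting derivatives in $(h, t)$ to derivatives in $(h_+, t_+)$ via the inverse of $\Phi_+$ costs exactly one order in $l$ (through the $\partial_t S$ factors) and one order in $k+j$ (through the implicit-function-theorem expansion), matching precisely the stated drop in smoothness indices.
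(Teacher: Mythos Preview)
Your construction is exactly the paper's: the same generating function (modulo a sign), the same implicit canonical map, the same Taylor expansion, and the same identification of $\partial_t\alpha\cdot\partial_{h_+}S$ as the size-determining remainder term. Two issues deserve correction.

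First, a sign: with your $S = +\int_0^\theta(R-[R])\,d\tau$ one has $\partial_\theta S = R-[R]$, so $I_+ = I+\partial_\theta S = \alpha + 2R - [R]$ and nothing cancels. The paper takes $S_+ = -\int_0^\theta(R-[R])\,d\tau$, which gives $\partial_\theta S_+ = -(R-[R])$ and hence $R+\partial_\theta S_+ = [R]$ at leading order. (Also note that the lemma keeps $\alpha$ fixed and places $[R]$ inside $R_+$, rather than absorbing it into $\alpha$.)

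Second, your handling of $j=i+1$ is incomplete. The remainder term
\[
\int_0^1 \partial_h R\bigl(h_+ +\mu\,\partial_t S_+,\,t,\,\theta\bigr)\,\partial_t S_+\,d\mu
\]
has $R$ depending on $\theta$ both through the shifted first argument \emph{and} through its explicit third slot. Your chain-rule remark (``lower-order $\theta$-derivatives of $R$ multiplied by $\partial_\theta S$ factors'') covers only the former; when all $i{+}1$ of the $\partial_\theta$'s land on the explicit slot you face $\partial_h\partial_\theta^{\,i+1}R$, which the stated hypothesis (valid only for $j\le i$) does not bound. The paper's proof is equally terse here. The clean fix is to carry through the iteration, alongside the sharp bound for $j\le i$, the cruder estimates $|\partial_h^k\partial_t^l\partial_\theta^j R|\le Ch^{-k/2+(j-1)/2}$ for $j>i$ that $R_5$ enjoys by Lemma~\ref{L12}; these propagate (and improve) at each step, and combined with the extra factor $|\partial_t S_+|\le Ch^{-1/4}$ they close the missing estimate.
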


  \begin{proof}
 Set
$\Phi_{+}:(h_{+},\ t_{+},\ \theta)\rightarrow(h,\ t,\ \theta)$ implicitly given
by
    \begin{equation}\label{T0{5+}}\left\{ \begin{array}{l}
    \nonumber   h=h_{+}+\partial_{ t}S_{+}(h_{+},t,\theta)\\
     \nonumber   t_{+}\ =t+\partial_{ h_{+}}S_{+}(h_{+},t_,\theta)
     \end{array}\right.
 \end{equation}
 with the generating function $S_{+}(h_{+},t,\theta)$ determined by
\begin{equation}\label{S05+}
   \nonumber S_{+}(h_{+},t,\theta)=-\int_0^\theta(R(h_{+}, t,\theta) -[R](h_{+}, t))d\theta.
\end{equation}
It is easy to show that, for $ k+j\leq \Upsilon_1-(i+1)-2,\
l\leq{\Upsilon_2}-(i+1)-3$,
$$\Big|\partial^k_{h_{+}}\partial^l_{t}\partial^j_\theta S_{+}(h_{+},t,\theta)\Big|\leq Ch_{+}^{-\frac{1}{4}-\frac{k}{2}},\ j=0,\ 1,\ \cdots,\ i+1,$$
 which means that the  smoothness of
$ S_{+}(h_{+},t,\theta)$ depending on $\theta$ is better than
$R_{}.$

Under $\Phi_{+}$, the Hamiltonian $I$ is transformed into
$I_{+}$ as following
 \begin{eqnarray}\label{I05+}
   \nonumber I_{+}(h_{+},t_{+},\theta)&=&\alpha(h_{+}+\partial_{ t}S_{+},t_{+}-\partial_{ h_{+}}S_{+})+R_{}(h_{+}+\partial_{ t_{}}S_{+},t_{},\theta)+\partial_{\theta}S_{+}\\
   \nonumber   &=&   { \alpha(h_{+},t_{+})+R_{{+}}(h_{+},t_{+},\theta),}
   %&&-\int_0^1\partial_{t_{}}R_{}(h_{+},t_{+}-\mu\partial_{ h_{+}}S_{+},\theta)\partial_{ h_{+}}S_{+}d\mu.
 \end{eqnarray}
 where
  \begin{eqnarray}\label{R05+}
    \nonumber  R_{{+}}(h_{+},t_{+},\theta)&=&[R_{}](h_{+}, t_{+})\\
       \nonumber  &&+\int_0^1\partial_I\alpha(h_{+}+\mu\partial_{ t_{}}S_{+},t_{})\partial_{t_{}}S_{+}d\mu-\int_0^1\partial_t\alpha(h_{+},t_{+}-\mu\partial_{ h_{+}}S_{+})\partial_{h_{+}}S_{+}d\mu\\
     \nonumber   &&+\int_0^1\partial_{t_{}}[R_{}](h_{+},t_{+}-\mu\partial_{ h_{+}}S_{+})\partial_{h_{+}}S_{+}d\mu+\int_0^1\partial_{h_{}}R_{}(h_{+}+\mu\partial_{ t_{}}S_{+},t_{},\theta)\partial_{
       t_{}}S_{+}d\mu.
 \end{eqnarray}

    {  Note that the worst term  in $R_+$ is} $\int_0^1\partial_t\beta(h_{+},t_{+}-\mu\partial_{ h_{+}}S_{+})\partial_{h_{+}}S_{+}d\mu$,   therefore the estimates  are calculated directly. \end{proof}

The proof of Lemma \ref{P5s} is completed by using Lemma
\ref{P53-1} $\nu$ times and
$\Phi_{6,\nu}:(h_6,t_6)\rightarrow(h_{},t_{})$ is the composition of
$\nu$ corresponding transformations. \end{proof}

    \subsection{Exchange  the roles of $(h_6,t_6)$ and $(I_6,\theta)$}
    %It is  difficult to use Moser's small twist
   % invariant curve theorem to the Poincar\'e map with the Hamiltonian
   % $I_6$ for the poor normal form
   % $\alpha(h_{6},t_6)$ with new angular variable $t_6$, cf.
   % (\ref{I06}).
   % Meanwhile the smoothness  with respect to $\theta$ is well enough, at least to the $\nu^{th}$ order, cf. %$I_6(\cdot,\theta)$.
  %  Thus, we exchange the roles of $(h_6,t_6)$ and $(I_6,\theta)$
  %  again.

   With Lemma \ref{P5s},   {  we have  better estimates about derivatives of the new perturbation with respect to $\theta$ than those for the old one}. Thus the method of exchanging the roles of angle and time will work again.

   Consider the Hamiltonian (\ref{I06}). Assume $h_6=N(\rho,t_6)$ be the inverse function of  $\rho=\alpha(h_6,t_6)$ with respect to the variable $\rho$.
   { Noting that $ {\partial_{h_6} I_6} >ch_6^{-\frac{1}{2}}>0$ as $h_6\rightarrow\infty,$
    for large $h_6$ we can solve (\ref{I06}) for it as the following form:}
    \begin{eqnarray}\label{h6}
  h_6(I_6,\theta,t_6)=N(I_6,t_6)+P(I_6,\theta,t_6).
 \end{eqnarray}

With (\ref{I06}) and (\ref{h6}), we have
\begin{eqnarray}\label{Inu-2}
  \nonumber  I_6&=&\alpha(N+P,t_6) +R_{6}(N+P,t_6,\theta)\\
 \nonumber   &=&\alpha(N,t_6)+\partial_{h_6}\alpha(N,t_6)P+\int_0^1\int_0^1\partial^2_{h_6}\alpha(N+s\mu P,t_6)\mu P^2dsd\mu\\
   \nonumber &&+R_{6}(N+P,t_6,\theta).
 \end{eqnarray}
Note that $I_6=\alpha(N,t_6)$, then
\begin{eqnarray}\label{Inu-3}
  \nonumber  0&=&\partial_{h_6}\alpha(N,t_6)P+\int_0^1\int_0^1\partial^2_{h_6}\alpha(N+s\mu P,t_6)\mu P^2dsd\mu+R_{6}(N+P,t_6,\theta).
 \end{eqnarray}
 Implicitly,
   \begin{eqnarray}\label{P001}
    P&=&-\frac{1}{\partial_{h_6}\alpha(N,t_6)}\{R_{6}(N+P,t_6,\theta)+\int_0^1\int_0^1\partial^2_{h_6}\alpha(N+s\mu P,t_6)\mu
    P^2dsd\mu\}.
    \end{eqnarray}

\vskip 0.3cm
   { In the following, we give} the estimates on $N(I_6,t_6)$ and $P(I_6,\theta,t_6)$.
 \begin{lemma}\label{L14}
 For $I_6$ large enough, $\theta,\ t_6\in \mathbb{S}^1$ and $\nu$ defined as in Lemma \ref{P5s}, it holds that

\begin{eqnarray}\label{L14-1}
cI_{6}^{2-k}\leq\Big|\partial^k_{I_6}
N(I_6,t_6)\Big|\leq CI_{6}^{2-k},\  k=0,1,2;
 \end{eqnarray}
 \begin{eqnarray}\label{L14-3}
\Big|\partial^k_{I_6}\partial^l_{t_6}
N(I_6,t_6)\Big|\leq CI_{6}^{2-k},\  k\leq\Upsilon_1-1,\
l\leq{\Upsilon_2}-2;
 \end{eqnarray}
 and
 \begin{eqnarray}\label{L14-2}
\Big|\partial^k_{I_6}\partial^j_{\theta}\partial^l_{t_6}
P(I_6,\theta,t_6)\Big|\leq CI_{6}^{\frac{1}{2}},\
k+j\leq\Upsilon_1-\nu-2,\ j\leq\nu,\ l\leq{\Upsilon_2}-\nu-3.
 \end{eqnarray}

    \end{lemma}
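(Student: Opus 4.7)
The plan is to reduce both estimates to the implicit identities that define $N$ and $P$, and handle them in order: first the inverse function $N$, whose behaviour is governed entirely by $\alpha$; then the implicit correction $P$, whose behaviour is governed by $R_6$ modulated by $\alpha$.

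For (\ref{L14-1}) and (\ref{L14-3}) I would differentiate the identity $\alpha(N(I_6,t_6),t_6)\equiv I_6$. The two-sided bound (\ref{alpha02}) gives $cI_6^{2}\le N\le CI_6^{2}$ immediately. One differentiation in $I_6$ yields $\partial_{I_6}N=1/\partial_{h_6}\alpha(N,t_6)$, which by (\ref{alpha02}) is of order $N^{1/2}\sim I_6$, i.e.\ $I_6^{2-1}$, together with the matching lower bound. The remaining estimates in $I_6$ (for $k=2$) and the mixed $(I_6,t_6)$-derivatives follow by induction on $k+l$ using Fa\`a di Bruno: each extra $\partial_{I_6}$ brings in a factor $\partial_{h_6}\alpha^{-1}\sim h_6^{1/2}$ together with lower-derivative pieces whose scaling, by the inductive hypothesis, exactly produces $I_6^{2-k}$, while each $\partial_{t_6}$ is bounded using (\ref{alpha03}) and preserves the $h_6^{1/2-k}$ scale.

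For (\ref{L14-2}) I would treat (\ref{P001}) as a fixed-point equation $P=\mathcal{F}(P)$ on the ball $\{|P|\le CI_6^{1/2}\}$. The leading term $-R_6(N+P,t_6,\theta)/\partial_{h_6}\alpha(N,t_6)$ is of order $h_6^{-1/4}\cdot h_6^{1/2}\sim I_6^{1/2}$ by (\ref{R06}) and (\ref{alpha02}); the quadratic remainder involving $\partial^2_{h_6}\alpha\cdot P^2$ is of order $h_6^{-3/2}\cdot I_6\sim I_6^{-2}$, i.e.\ negligible. A standard contraction argument then produces $P$ with $|P|\le CI_6^{1/2}$. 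Derivatives of $P$ are obtained by differentiating (\ref{P001}) implicitly and inducting on $k+j+l$: each $\partial_{I_6}$ applied to the leading term produces either $\partial_{h_6}R_6\cdot(\partial_{I_6}N+\partial_{I_6}P)/\partial_{h_6}\alpha\sim h_6^{-3/4}\cdot h_6^{1/2}/h_6^{-1/2}=h_6^{1/4}\sim I_6^{1/2}$, or a term with $R_6\cdot\partial^2_{h_6}\alpha\cdot\partial_{I_6}N/(\partial_{h_6}\alpha)^{2}$ which is of strictly smaller order. The $\theta$- and $t_6$-derivatives of $R_6$ inherit the estimate $h_6^{-1/4-k/2}$ from (\ref{R06}) within the prescribed ranges $j\le\nu$ and $l\le\Upsilon_2-\nu-3$, so the same balance $I_6^{1/2}$ persists after each differentiation.

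The technical obstacle is the bookkeeping for $\partial^k_{I_6}$ when $k\ge 2$: the chain rule applied to $R_6(N+P,\cdot)$ generates a Fa\`a di Bruno tree whose internal nodes are partial derivatives of $N+P$, and one must verify that every product of such factors multiplied by the corresponding partial of $R_6$ and by the appropriate power of $1/\partial_{h_6}\alpha$ returns the common bound $CI_6^{1/2}$. The critical observation which makes the bookkeeping tractable is that $\partial^{k'}_{I_6}N\sim I_6^{2-k'}$ for $k'\ge 1$ and $\partial^{k'}_{I_6}P\sim I_6^{1/2}$ by the inductive hypothesis: a factor of the first kind contributes $I_6^{2-k'}$ and is balanced by the $\partial_{h_6}^{k'}R_6$ factor $\sim h_6^{-1/4-k'/2}=I_6^{-1/2-k'}$ times a power of $1/\partial_{h_6}\alpha\sim I_6$, so each branch is self-balancing. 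The ranges of $k,j,l$ in the statement reflect exactly the smoothness available for $R_6$ in Lemma \ref{P5s} combined with the two derivatives of $\alpha$ consumed in handling the denominator.
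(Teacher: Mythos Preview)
Your proposal is correct and follows essentially the same route as the paper: both differentiate the identity $\alpha(N,t_6)\equiv I_6$ and use (\ref{alpha02})--(\ref{alpha03}) with Fa\`a di Bruno for (\ref{L14-1})--(\ref{L14-3}), and both differentiate the implicit relation (\ref{P001}) and induct on the total order $k+j+l$ for (\ref{L14-2}), isolating the top-order $\partial^k_{I_6}\partial^j_\theta\partial^l_{t_6}P$ and bounding the remaining Leibniz/Fa\`a di Bruno tree by the inductive hypothesis. The only cosmetic difference is that you frame the base case $|P|\le CI_6^{1/2}$ as a contraction, whereas the paper simply reads it off from (\ref{P001}) since $P$ is already defined as $h_6-N$; this is harmless.
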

\begin{proof}
 Although the proof is similar to the one of Lemma
\ref{L05}, the details are different. In Lemma
\ref{L05}, the polynomial growth condition is available, while in this lemma it is not the case.
   { We show} a complete proof as follows.
\vskip0.3cm
 (i) Firstly, we estimate $N(I_6,t_6).$ Note that $\alpha(N(I_6,t_6),t_6)\equiv I_6,$ then
 \begin{eqnarray}\label{}
  \nonumber   cI_6^2\leq\Big|N\Big|\leq CI_6^2,
 \end{eqnarray}
 and
\begin{eqnarray}\label{}
  \nonumber \partial_{h_6}\alpha\cdot\partial_{I_6}N=1,\ \  \partial_{h_6}\alpha\cdot\partial_{t_6}N+ \partial_{t_6}\alpha=0.
 \end{eqnarray}
 Thus from (\ref{alpha02}) and (\ref{alpha03}), it follows that
    \begin{eqnarray}\label{}
  \nonumber  cI_6\leq\Big|\partial_{I_6}N\Big|\leq CI_6,\ \  \Big|\partial_{t_6}N\Big|\leq CI_6^2.
 \end{eqnarray}
Generally, for $2\leq k+j\leq\Upsilon_1-1$ and $l\leq{\Upsilon_2}-2$,
  \begin{eqnarray}\label{}
 \nonumber \partial^k_{I_6}\partial^l_{t_6}\alpha(N(I_6,t_6),t_6)=0.
 \end{eqnarray}

 Using Leibniz's rule, the left hand side of the equation,
$\partial^k_{I_6}\partial^l_{t_6}\alpha(N(I_6,t_6),t_6)$ is the
sum of terms$$ (\partial^u_{h_6}\partial^v_{t_6}\alpha)
\Pi_{i=1}^u\partial^{k_i}_{I_6}\partial^{l_i}_{t_6}N$$ with
$1\leq u+v\leq k+l,\ \sum_{i=1}^{u}k_i =k,v+\sum_{i=1}^{u}l_i
=l$,\ and $k_i+l_i\geq 1,\ i=1,\ldots,u.$ Following (\ref{alpha02}) and (\ref{alpha03}),
(\ref{L14-1}) and (\ref{L14-3}) are obtained inductively.
\vskip0.3cm
(ii) Secondly, from (\ref{P001}) we obtain $\big|P\big|\leq CI_6^{\frac{1}{2}}$
and
 \begin{eqnarray}\label{P001-1}
    -\partial_{h_6}\alpha(N,t_6)\cdot P&=&R_{6}(N+P,t_6,\theta)+\int_0^1\int_0^1\partial^2_{h_6}\alpha(N+s\mu P,t_6)\mu P^2dsd\mu.
 \end{eqnarray}

 Suppose \begin{eqnarray}\label{P001-3}
\Big|\partial^k_{I_6}\partial^j_{\theta}\partial^l_{t_6}
P(I_6,\theta,t_6)\Big|\leq CI_{6}^{\frac{1}{2}}
 \end{eqnarray}
 holds for $k+j+l< m,\ k+j\leq\Upsilon_1-\nu-2,\ l\leq{\Upsilon_2}-\nu-3$.

When $k+j+l=m,\ k+j\leq\Upsilon_1-\nu-2,\ l\leq{\Upsilon_2}-\nu-3,$ consider the left hand
side of (\ref{P001-1}), we claim that
\begin{eqnarray}\label{P001-2}
\Big|\partial^k_{I_6}\partial^l_{t_6}(\partial_{h_6}\alpha(N,t_6))\Big|\leq
CI_6^{-1-k}.
 \end{eqnarray}

 In fact, by Leibniz's rule,
 $\partial^k_{I_6}\partial^l_{t_6}(\partial_{h_6}\alpha(N,t_6))$
 is the sum of terms
$$ (\partial^{u+1}_{h_6}\partial^v_{t_6}\alpha)
\Pi_{i=1}^u\partial^{k_i}_{I_6}\partial^{l_i}_{t_6}N,$$ with
$1\leq u+v\leq k+l,\ \sum_{i=1}^{u}k_i =k,v+\sum_{i=1}^{u}l_i
=l$,\ and $k_i+l_i\geq 1,\ i=1,\ldots,u.$
Then, $$
\Big|\partial^k_{I_6}\partial^l_{t_6}(\partial_{h_6}\alpha(N,t_6))\Big|\leq
CI_6^{-1-2u+2u-k}\leq CI_6^{-1-k}.$$

Thus, differentiating the left hand side of (\ref{P001-1}), we
have
\begin{eqnarray}\label{pp1}
\partial^k_{I_6}\partial^j_{\theta}\partial^l_{t_6}(\partial_{h_6}\alpha(N,t_6)\cdot
P)&=&\partial_{h_6}\alpha(N,t_6)\cdot
\partial^k_{I_6}\partial^j_{\theta}\partial^l_{t_6} P+\widetilde{P}
\end{eqnarray}
where $\widetilde{P}$ is the sum of terms
$$
\partial^{k_1}_{I_6}\partial^{l_1}_{t_6}(\partial_{h_6}\alpha(N,t_6))\cdot
\partial^{k_2}_{I_6}\partial^j_{\theta}\partial^{l_2}_{t_6}P$$
with $k_1+k_2=k,l_1+l_2=l, k_2+l_2+j<m.$ From
(\ref{P001-3}), (\ref{P001-2}), it holds that $|\widetilde{P}|\leq
I^{-\frac{1}{2}}.$

Now, consider the right hand side of (\ref{P001-1}).
\begin{eqnarray}\label{}
   \nonumber \partial^k_{I_6}\partial^j_{\theta}\partial^l_{t_6} {R_6(N+P,t_6,\theta)}
 \end{eqnarray}
 is the sum of
terms$$ \partial^p_{h_6}\partial^q_{\theta}\partial^r_{t_6}
R_6(h_6,t_6,\theta)
\Pi_{i=1}^p\partial^{k_i}_{I_6}\partial^{j_i}_{\theta}\partial^{l_i}_{t_6}(N+P),$$
with $1\leq p+q+r\leq k+j+l,\ \sum_{i=1}^{p}k_i
=k,q+\sum_{i=1}^{p}j_i =j,r+\sum_{i=1}^{p}l_i =l$,\ and
$k_i+j_i+l_i\geq 1,\ i=1,\ldots,p.$

%The estimates are more complicated, but the method is similar to
%the estimates on $R$, cf. lemma \ref{L05}.

% For simplicity, we show the estimates on the main term of $P$, i.e., $\displaystyle-\frac{R_6(N(I_6,t_6),t_6,\theta)}{\partial_{h_6}\alpha(N(I_6,t_6),t_6)}.$

%For $j=0,1,2,\ldots,\nu;\ k,l=0,1,2,\ldots,$ using Leibniz' rule,
%\begin{eqnarray}\label{}
% \partial^k_{I_6}\partial^j_{\theta}\partial^l_{t_6} \Bigg(\frac{R_6(N(I_6,t_6),t_6,\theta)}{\partial_{h_6}\alpha(N(I_6,t_6),t_6)}\Bigg)
% \end{eqnarray}
% is the sum of
%terms$$ \partial^p_{h_6}\partial^j_{\theta}\partial^r_{t_6}
%\Bigg(\frac{R_6(h_6,t_6,\theta)}{\partial_{h_6}\alpha(h_6,t_6)}\Bigg)
%\Pi_{i=1}^p\partial^{k_i}_{I_6}\partial^{l_i}_{t_6}N,$$ with
%$1\leq p+r+j\leq k+j+l,\ \sum_{i=1}^{p}k_i =k,r+\sum_{i=1}^{p}l_i
%=l$,\ and $k_i+l_i\geq 1,\ i=1,\ldots,p.$
From
 (\ref{R06}), it is easy to show that
$$ \Big|\partial^p_{h_6}\partial^j_{\theta}\partial^r_{t_6} R_6(h_6,t_6,\theta)\Big|\leq Ch_6^{-\frac{1}{4}-\frac{p}{2}}\sim CI_6^{-\frac{1}{2}-p},$$
which, combined with (\ref{L14-3}), implies that
  $$ \Big|\partial^p_{h_6}\partial^j_{\theta}\partial^r_{t_6} {R_6(h_6,t_6,\theta)}
\cdot\Pi_{i=1}^p\partial^{k_i}_{I_6}\partial^{l_i}_{t_6}N\Big|\leq C
I_6^{-\frac{1}{2}-p+2p-k}\leq  CI_6^{-\frac{1}{2}},$$ and

$$ \sum\partial^p_{h_6}\partial^q_{\theta}\partial^r_{t_6}
R_6
\cdot\Pi_{i=1}^p\partial^{k_i}_{I_6}\partial^{j_i}_{\theta}\partial^{l_i}_{t_6}P=\partial_{h_6}R_6\cdot\partial^{k}_{I_6}\partial^{j}_{\theta}\partial^{l}_{t_6}P+\sum_{k_i+j_i+l_i<k+j+l}\partial^p_{h_6}\partial^q_{\theta}\partial^r_{t_6}
R_6
\cdot\Pi_{i=1}^p\partial^{k_i}_{I_6}\partial^{j_i}_{\theta}\partial^{l_i}_{t_6}P.$$
From the assumption, it follows that
$$|\sum_{k_i+j_i+l_i<k+j+l}\partial^p_{h_6}\partial^q_{\theta}\partial^r_{t_6}
R_6
\cdot\Pi_{i=1}^p\partial^{k_i}_{I_6}\partial^{j_i}_{\theta}\partial^{l_i}_{t_6}P|\leq
CI_6^{-\frac{1}{2}-p+\frac{p}{2}}\leq CI_6^{-\frac{1}{2}}.$$ Hence
\begin{eqnarray}\label{pp2}
 \partial^k_{I_6}\partial^j_{\theta}\partial^l_{t_6}
 {R_6(N+P,t_6,\theta)}=\partial_{h_6}R_6\cdot\partial^{k}_{I_6}\partial^{j}_{\theta}\partial^{l}_{t_6}P+\hat{P}
 \end{eqnarray}
with $|\hat{P}|\leq CI_6^{-\frac{1}{2}}.$

Finally, consider
\begin{eqnarray}\label{}
\nonumber
\partial^k_{I_6}\partial^j_{\theta}\partial^l_{t_6}(\partial^2_{h_6}\alpha(N+s\mu P,t_6)\mu
P^2).
\end{eqnarray}
By the same method, we have
\begin{eqnarray}\label{pp3}
\nonumber \partial^2_{h_6}\alpha(N+s\mu P,t_6)\mu
P^2=(\partial^3_{h_6}\alpha(N+s\mu P,t_6)\mu
P^2+2\partial^2_{h_6}\alpha(N+s\mu P,t_6)\mu
P)\cdot\partial^k_{I_6}\partial^j_{\theta}\partial^l_{t_6}P+\breve{P}\\
\end{eqnarray}
with $|\breve{P}|\leq CI_6^{-2}.$

 With
(\ref{pp1}), (\ref{pp2}) and (\ref{pp3}), by induction,
 we get (\ref{L14-2}).
  \end{proof}

% \begin{remark}
%Let $\nu_2=\nu=\nu$, thus
%$$|\partial^k_{I_6}\partial^j_{\theta}\partial^l_{t_6} M(I_6,\theta,t_6)|\leq CI_{6}^{\frac{1}{2}};$$
%and
%   $$|\partial^k_{I_6}\partial^j_{\theta}\partial^l_{t_6} P(I_6,\theta,t_6)|\leq CI_{6}^{0},\ j=0,1,2,\ldots,\nu.$$

 %\end{remark}

 %%%%%%%%%%%%%%%%%%%%%%%%%%%%%%%%%%%%%%%%%%%%%%%%%%%%%%%%%%%%%%%%%%%%%%%%%%%%%%%%%%%%%%%%%%%%%%%%%%%%%%%%%%%%%%%%%%%%%%%%%%%%%%%%%
 \subsection{A nearly integrable system}\label{4.5}

 For convenience, we redefine the variables as
 $(I_6,\theta,t_6,h_6)\rightarrow(\rho,\theta,\tau,h)$, and
 (\ref{h6}) is rewritten by
   \begin{eqnarray}\label{rho}
  h(\rho,\theta,\tau)=N(\rho,\tau)+P(\rho,\theta,\tau).
 \end{eqnarray}
Inductively, consider the Hamiltonian
 \begin{eqnarray}\label{}
   \nonumber h_s(\rho_{ {s}},\theta_{ {s}},\tau)=N(\rho_{ {s}},\tau)+M_{{ {s}}}(\rho_{ {s}},\tau)+P_{{ {s}}}(\rho_{ {s}},\theta_{ {s}},\tau),\
  \ { {s}}=0,1,2,\cdots
 \end{eqnarray}
 with
 \begin{eqnarray}\label{}
   \nonumber (\rho_0,\theta_0)=(\rho,\theta),\ \ \ \ M_0=0,\ \ \ \ P_0=P;
 \end{eqnarray}
 and $M_{ {s}}(\rho_{ {s}},\tau),\ P_{{ {s}}}(\rho_{ {s}},\theta_{ {s}},\tau)$ satisfying
 \begin{eqnarray}\label{ite01}
 \Big|\partial^k_{\rho_{ {s}}}\partial^l_{\tau} M_{ {s}}(\rho_{ {s}},\tau)\Big|\leq
 C\rho_{ {s}}^{\frac{1}{2}}, \ \ \ k+j\leq{\Upsilon_1}-\nu-{ {s}}-1,\
 l\leq{\Upsilon_2}-\nu-{ {s}}-2;
 \end{eqnarray}
  \begin{eqnarray}\label{ite02}
\nonumber\Big|\partial^k_{\rho_{ {s}}}\partial^j_{\theta_{ {s}}}\partial^l_{\tau}
P_{ {s}}(\rho_{ {s}},\theta_{ {s}},\tau)\Big|\leq C\rho_{ {s}}^{-\frac{1}{2}},\ \ \ j\leq\nu,\
k+j\leq{\Upsilon_1}-\nu-{ {s}}-2,\ l\leq{\Upsilon_2}-\nu-{ {s}}-3.\\
 \end{eqnarray}
for $\rho_{ {s}}$ large enough. Thus we have
\begin{lemma}\label{L15}
 Suppose the Hamiltonian $h_{ {s}}$  with
 ${ {s}}=\kappa-1$ satisfies (\ref{ite01}), (\ref{ite02}).
 Then there exists a canonical transformation
 $\Psi_\kappa:(\rho_\kappa,\theta_\kappa, \tau)\rightarrow(\rho_{\kappa-1},\theta_{\kappa-1},\tau)$
 such that
  the new Hamiltonian $h_{ {s}}$  with ${ {s}}=\kappa$ satisfies (\ref{ite01}), (\ref{ite02}).
    \end{lemma}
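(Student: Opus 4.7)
\medskip

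\noindent\textbf{Proof proposal for Lemma \ref{L15}.}
My plan is to mimic the normal-form step already used in Subsections \ref{s4.1}, \ref{4.2} and Lemma \ref{P53-1}: kill the $\theta_{\kappa-1}$-dependent part of $P_{\kappa-1}$ by a single canonical transformation generated by the primitive of its oscillatory part, absorbing its mean into $M$, and verify that the remainder still obeys the bounds (\ref{ite01})--(\ref{ite02}) with $s=\kappa$. Since we are now in the exchanged-roles regime where $\theta$ plays the role of time and $(\rho,\tau)$ is the conjugate pair, the relevant transformation $\Psi_\kappa:(\rho_\kappa,\theta_\kappa,\tau)\to(\rho_{\kappa-1},\theta_{\kappa-1},\tau)$ will be implicitly defined by
$$
\rho_{\kappa-1}=\rho_\kappa+\partial_\tau S_\kappa(\rho_\kappa,\tau,\theta),\qquad \tau_{\kappa}=\tau+\partial_{\rho_\kappa}S_\kappa(\rho_\kappa,\tau,\theta),\qquad \theta_\kappa=\theta_{\kappa-1}=\theta,
$$
with generating function
$$
S_\kappa(\rho_\kappa,\tau,\theta)=-\int_0^\theta\bigl(P_{\kappa-1}(\rho_\kappa,\sigma,\tau)-[P_{\kappa-1}](\rho_\kappa,\tau)\bigr)\,d\sigma.
$$
By (\ref{ite02}) for $s=\kappa-1$, the integrand is bounded by $C\rho_\kappa^{-1/2}$ and its mixed derivatives inherit the same bound, which is what makes $S_\kappa$ small enough for the implicit transformation to be well defined and smooth on the admissible range of indices.

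\medskip

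Substituting into $h_{\kappa-1}$ and using the homological identity $\partial_\theta S_\kappa=-(P_{\kappa-1}-[P_{\kappa-1}])$, the new Hamiltonian becomes
$$
h_\kappa(\rho_\kappa,\theta,\tau_\kappa)=N(\rho_\kappa,\tau_\kappa)+\bigl(M_{\kappa-1}+[P_{\kappa-1}]\bigr)(\rho_\kappa,\tau_\kappa)+P_\kappa(\rho_\kappa,\theta,\tau_\kappa),
$$
where I set $M_\kappa:=M_{\kappa-1}+[P_{\kappa-1}]$ and $P_\kappa$ collects all remaining terms, namely the Taylor remainders
\begin{eqnarray*}
P_\kappa&=&\int_0^1\partial_\rho(N+M_{\kappa-1})(\rho_\kappa+\mu\partial_\tau S_\kappa,\tau)\,\partial_\tau S_\kappa\,d\mu\\
&&-\int_0^1\partial_\tau(N+M_{\kappa-1}+[P_{\kappa-1}])(\rho_\kappa,\tau_\kappa-\mu\partial_{\rho_\kappa}S_\kappa)\,\partial_{\rho_\kappa}S_\kappa\,d\mu\\
&&+\int_0^1\partial_\rho P_{\kappa-1}(\rho_\kappa+\mu\partial_\tau S_\kappa,\theta,\tau)\,\partial_\tau S_\kappa\,d\mu.
\end{eqnarray*}
The bound $|[P_{\kappa-1}]|\le C\rho_\kappa^{-1/2}$, combined with (\ref{ite01}) for $s=\kappa-1$, immediately yields $|\partial_{\rho_\kappa}^{k}\partial_\tau^l M_\kappa|\le C\rho_\kappa^{1/2}$ in the asserted range.

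\medskip

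The main obstacle is to recover (\ref{ite02}) for $P_\kappa$ in the restricted range $j\le\nu$, $k+j\le\Upsilon_1-\nu-\kappa-2$, $l\le\Upsilon_2-\nu-\kappa-3$: one has to show that the worst summand, the $\tau$-derivative term $\partial_\tau N\cdot\partial_{\rho_\kappa}S_\kappa$, is indeed $O(\rho_\kappa^{-1/2})$. Since $\partial_\tau N$ grows like $\rho^{2}$ by Lemma \ref{L14} while $\partial_{\rho_\kappa}S_\kappa$ only gains a factor $\rho_\kappa^{-1}$ from the $\rho$-derivative of $P_{\kappa-1}$, naive differentiation gives the right power only after we use that the factor $\partial_\rho N$ carries no $\theta$ dependence, so differentiation in $\theta$ falls exclusively on $S_\kappa$ and uses the $\theta$-smoothness improvement of Subsection \ref{4.3} encoded in the bound $j\le\nu$. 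This is exactly the step where the loss of one derivative in $k+j$ and in $l$ at each iteration step is consumed: each application of Leibniz's rule distributes derivatives between $S_\kappa$ and the base functions, and the induction hypothesis (\ref{ite02}) with $s=\kappa-1$ furnishes the bound for the terms involving $\partial_{\rho_\kappa}^{k'}\partial_\theta^{j'}\partial_\tau^{l'}P_{\kappa-1}$ with $k'+j'+l'<k+j+l$. A routine but careful induction on the total order of differentiation, patterned on the argument of Lemma \ref{L14}, then closes the estimate and yields the claimed inequalities for $P_\kappa$, completing the iteration step.
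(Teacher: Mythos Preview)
Your proposal has a genuine gap: you have misidentified the symplectic structure after the second exchange of roles in Subsection~4.4. Once one solves (\ref{I06}) for $h_6$ and renames $(I_6,\theta,t_6,h_6)\to(\rho,\theta,\tau,h)$ as in (\ref{rho}), the variable $\tau$ is the \emph{new time} and the conjugate pair is $(\rho,\theta)$, not $(\rho,\tau)$; this is exactly how the Hamilton equations are written in (\ref{P02}). Consequently your transformation $\rho_{\kappa-1}=\rho_\kappa+\partial_\tau S_\kappa$, $\tau_\kappa=\tau+\partial_{\rho_\kappa}S_\kappa$, $\theta_\kappa=\theta_{\kappa-1}$ is not canonical for the correct two--form $d\rho\wedge d\theta$, and the homological identity you invoke, $\partial_\theta S_\kappa=-(P_{\kappa-1}-[P_{\kappa-1}])$, does not cancel $P_{\kappa-1}$ in the new Hamiltonian. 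In the correct structure the leading new term comes from $N(\rho_\kappa+\partial_{\theta_{\kappa-1}}Q_\kappa,\tau)\approx N+\partial_\rho N\cdot\partial_{\theta_{\kappa-1}}Q_\kappa$, so the homological equation is
\[
\partial_\rho N(\rho_\kappa,\tau)\,\partial_{\theta_{\kappa-1}}Q_\kappa+(P_{\kappa-1}-[P_{\kappa-1}])=0,
\]
which forces the factor $1/\partial_\rho N$ in the generating function, as in the paper's (\ref{Tnu01})--(\ref{Q01}). Your $S_\kappa$ lacks this factor.

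This omission is fatal for the estimates. With your $S_\kappa$ one has $|\partial_\tau S_\kappa|\le C\rho_\kappa^{-1/2}$ by (\ref{ite02}), so already the first term in your own $P_\kappa$, namely $\partial_\rho N\cdot\partial_\tau S_\kappa$, is of order $\rho_\kappa\cdot\rho_\kappa^{-1/2}=\rho_\kappa^{1/2}$ and violates (\ref{ite02}). The term you single out as worst, $\partial_\tau N\cdot\partial_{\rho_\kappa}S_\kappa$, is even larger: by (\ref{L14-3}) one has $|\partial_\tau N|\le C\rho_\kappa^{2}$, and (\ref{ite02}) gives \emph{no} gain from $\rho$--derivatives of $P_{\kappa-1}$ (this is precisely the failure of the polynomial growth condition), so your claim that $\partial_{\rho_\kappa}S_\kappa$ ``gains a factor $\rho_\kappa^{-1}$'' is unfounded and the product is $O(\rho_\kappa^{3/2})$. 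The paper's choice (\ref{Q01}) avoids all of this: the division by $\partial_\rho N\sim\rho_\kappa$ makes $Q_\kappa$ one power of $\rho_\kappa$ smaller than $P_{\kappa-1}$, the time--dependent correction $\partial_\tau Q_\kappa$ is then harmless, and the genuine remainder comes from the second--order Taylor term $\partial_\rho^2 N\cdot(\partial_{\theta_{\kappa-1}}Q_\kappa)^2$, which is what drives the improvement at each step.
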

    \begin{proof}
    Suppose $h_{ {s}}$ with (\ref{ite01}), (\ref{ite02}) holds for ${ {s}}=\kappa-1$ (case
    $\kappa=1$ is already satisfied by (\ref{L14-2})). Set
    $\Psi_\kappa:(\rho_\kappa,\theta_\kappa, \tau)\rightarrow(\rho_{\kappa-1},\theta_{\kappa-1}, \tau)$
    being defined implicitly by
    \begin{equation}\label{Tnu01}\left\{ \begin{array}{l}
    \rho_{\kappa-1}=\rho_\kappa+\partial_{
    {\theta_{\kappa-1}}}Q_\kappa(\rho_\kappa,{\theta_{\kappa-1}},\tau)\\
      \theta_\kappa ={\theta_{\kappa-1}}+\partial_{
      \rho_\kappa}Q_\kappa(\rho_\kappa,{\theta_{\kappa-1}},\tau)
     \end{array}\right.
 \end{equation}
 with the generating function $Q_\kappa(\rho_\kappa,{\theta_{\kappa-1}},\tau)$ determined
 by
\begin{equation}\label{Q01}
 Q_\kappa(\rho_\kappa,{\theta_{\kappa-1}},\tau)=-\int_0^{\theta_{\kappa-1}}\frac{1}{\partial_{\rho_{\kappa-1}}N(\rho_\kappa,\tau)}(P_{\kappa-1}(\rho_\kappa,
 {\theta_{\kappa-1}},\tau) -[P_{\kappa-1}](\rho_\kappa, \tau))d{\theta_{\kappa-1}}.
\end{equation}
 {Under $\Psi_{\kappa}$, the Hamiltonian $h_{\kappa-1}$ is transformed into $h_{\kappa}$
as following
 \begin{eqnarray}\label{hnew02-1}
   \nonumber h_{\kappa}(\rho_\kappa,\theta_\kappa,\tau)&=&N(\rho_\kappa+\partial_{
   {\theta_{\kappa-1}}}Q_\kappa,\tau)+M_{\kappa-1}(\rho_\kappa+\partial_{
   {\theta_{\kappa-1}}}Q_\kappa,\tau)\\
     \nonumber&&+P_{\kappa-1}(\rho_\kappa+\partial_{
     {\theta_{\kappa-1}}}Q_\kappa,{\theta_{\kappa-1}},\tau)+\partial_{\tau}Q_\kappa\\
                \nonumber    &=&N(\rho_\kappa,\tau)+M_\kappa(\rho_\kappa,\tau)+P_\kappa(\rho_\kappa,\theta_\kappa,\tau),
    \end{eqnarray}
where
$M_\kappa(\rho_\kappa,\tau)=M_{\kappa-1}(\rho_\kappa,\tau)+[P_{\kappa-1}](\rho_\kappa,\tau)$
and
  \begin{eqnarray}\label{P01}
   \nonumber
   P_\kappa(\rho_\kappa,\theta_\kappa,\tau)&=&\int_0^1\partial_{\rho_{\kappa-1}}\{M_{\kappa-1}+P_{\kappa-1}\}(\rho_\kappa+\mu\partial_{
   {\theta_{\kappa-1}}}Q_\kappa,\tau)\partial_{{\theta_{\kappa-1}}}Q_\kappa(\rho_\kappa,{\theta_{\kappa-1}},\tau)d\mu\\
    \nonumber    &&+\int_0^1\int_0^1\partial^2_{\rho_{\kappa-1}}N(\rho_\kappa+s\mu\partial_{
      {\theta_{\kappa-1}}}Q_\kappa,\tau)\mu(\partial_{
      {\theta_{\kappa-1}}}Q_\kappa)^2dsd\mu+\partial_{\tau}Q_\kappa.
 \end{eqnarray}}

 From (\ref{L14-1}), (\ref{ite02}), (\ref{Q01}), it follows that
  $$\Big|\partial^k_{\rho_{\kappa}}\partial^j_{{\theta}_{\kappa-1}}\partial^l_{\tau}
  Q_\kappa(\rho_{\kappa},{\theta}_{\kappa-1},\tau)\Big|\leq
  C\rho_{\kappa}^{-\frac{\kappa}{2}},\quad j\leq\nu,\ k+j\leq{\Upsilon_1}-\nu-\kappa-1,\
  l\leq{\Upsilon_2}-\nu-\kappa-2$$
 for $\rho_{\kappa}$ large enough.

 Finally, the estimates on $M_\kappa,\ P_\kappa$ are similar to those in the proof of lemma
 \ref{L08}. We omit it.\end{proof}

%  \begin{remark} The smoothness  order on $\tau$ is lower down by ``1'', but the estimates
 % on the derivatives of perturbations with $\theta$ is not changed.
%\end{remark}

Now, under a series of transformations as  $\Psi_1$, $\Psi_2$, $\ldots,\
\Psi_\kappa$, $h$ is transformed into
$h_{\kappa}$ with
 \begin{eqnarray}\label{hnu0k}
   \nonumber   h_{\kappa}(\rho_\kappa,\theta_\kappa,\tau)=N(\rho_\kappa,\tau)+M_\kappa(\rho_\kappa,\tau)+P_\kappa(\rho_\kappa,\theta_\kappa,\tau)
 \end{eqnarray}
 satisfying (\ref{ite01}) and (\ref{ite02}) with ${ {s}}=\kappa.$

From (\ref{ite02}), we find that
\begin{equation}\label{par01}
  {\Upsilon_2}-\nu-\kappa\geq3.
 \end{equation}

     \section{Existence of invariant curves}\label{s5}

Consider  the system
  with Hamiltonian $h_{\kappa}(\rho_\kappa,\theta_\kappa,\tau)$, that is
 \begin{equation}\label{P02}\left\{ \begin{array}{l}
   \displaystyle  \frac{d \theta_\kappa}{d\tau}=\partial_{ \rho_\kappa}(N+M_{\kappa})+\partial_{ \rho_\kappa} P_{\kappa},\\
   \\
    \displaystyle    \frac{d \rho_\kappa}{d\tau}=-\partial_{\theta_\kappa} P_{\kappa}.
     \end{array}\right.
      \end{equation}
      The Poincar\'e map $\mathcal{P}$ of (\ref{P02}) is of the form
 \begin{equation}\label{P03}\left\{ \begin{array}{l}
 \theta_{\kappa+}=\theta_{\kappa}+\gamma(\rho_\kappa)+F_1(\rho_\kappa,\theta_{\kappa}),\\
   \\
    \displaystyle    \rho_{\kappa+}=\rho_\kappa+F_2(\rho_\kappa,\theta_{\kappa}).
     \end{array}\right.
      \end{equation}
where
$(\rho_\kappa,\theta_\kappa)=(\rho_\kappa(0),\theta_\kappa(0))$,
and
\begin{equation}\label{} \begin{array}{l}
   \nonumber\displaystyle\gamma(\rho_\kappa)= \int_0^{2\pi}\partial_{ \rho_\kappa}(N+M_{\kappa})(\rho_\kappa,\tau)d\tau;  \\
   \\
 \displaystyle   F_1(\rho_\kappa,\theta_{\kappa})=\int_0^{2\pi}\partial_{ \rho_\kappa} P_{\kappa}(\rho_\kappa(\tau)),\theta_\kappa(\tau),\tau)d\tau  \\
   \displaystyle\ \ \ \ \ \ \ \ \ \ \ \ \ \   +\int_0^{2\pi}\partial_{ \rho_\kappa}(N+M_{\kappa})(\rho_\kappa(\tau),\tau)d\tau- \int_0^{2\pi}\partial_{ \rho_\kappa}(N+M_{\kappa})(\rho_\kappa,\tau)d\tau;\\
   \\
\displaystyle
F_2(\rho_\kappa,\theta_{\kappa})=-\int_0^{2\pi}\partial_{\theta_\kappa}
P_{\kappa}(\rho_\kappa(\tau)),\theta_\kappa(\tau),\tau)d\tau.\\
\\

     \end{array}
          \end{equation}
  Denote
$\gamma(\rho_\kappa,\tau)=\int_0^{\tau}\partial_{
\rho_\kappa}(N+M_{\kappa})(\rho_\kappa,s)ds, $ and
$\gamma(\rho_\kappa)=\gamma(\rho_\kappa,1)$.      By direct computations, we have the estimates on the map $\mathcal{P}$ as following.
 \begin{lemma}\label{L16}
Given $\rho_\kappa$ large enough and ${\theta_\kappa}\in \mathbb{S}^1$,  it
holds that
$$c\rho_\kappa\leq \gamma(\rho_\kappa)\leq C\rho_\kappa,\ \ \ \Big| \gamma^{(k)}(\rho_\kappa)\Big|\leq C\rho_\kappa^{\frac{1}{2}},\quad k\leq{\Upsilon_1}-\nu-\kappa-2;$$
  and for $k+j\leq{\Upsilon_1}-\nu-\kappa-3$, $ j\leq\nu-1,$
  $$\Big| F_1(\rho_\kappa,{\theta_\kappa})\Big|\leq C\rho_\kappa^{\frac{1}{2}-\frac{\kappa}{2}},\ \Big| F_2(\rho_\kappa,{\theta_\kappa})\Big|\leq C\rho_\kappa^{\frac{1}{2}-\frac{\kappa}{2}};$$
   $$\Big|\partial^k_{\rho_\kappa}\partial^j_{{\theta_\kappa}} F_1(\rho_\kappa,{\theta_\kappa})\Big|\leq C\rho_\kappa^{\frac{1}{2}-\frac{\kappa}{2}},\ \Big|\partial^k_{\rho_\kappa}\partial^j_{{\theta_\kappa}} F_2(\rho_\kappa,{\theta_\kappa})\Big|\leq C\rho_\kappa^{\frac{1}{2}-\frac{\kappa}{2}}.$$
    \end{lemma}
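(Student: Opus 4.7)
The proof is a direct but bookkeeping-intensive computation, combining Lemma \ref{L14} for $N$ with the iterative estimates (\ref{ite01})--(\ref{ite02}) for $M_\kappa$ and $P_\kappa$, together with flow bounds derived from the Hamiltonian system (\ref{P02}).

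For $\gamma(\rho_\kappa)$, split
$\gamma(\rho_\kappa) = \int_0^{2\pi}\partial_{\rho_\kappa}N(\rho_\kappa,\tau)\,d\tau + \int_0^{2\pi}\partial_{\rho_\kappa}M_\kappa(\rho_\kappa,\tau)\,d\tau$.
By (\ref{L14-1}), $\partial_{\rho_\kappa}N \asymp \rho_\kappa$ with constants uniform in $\tau$, while (\ref{ite01}) bounds $|\partial_{\rho_\kappa}M_\kappa| \leq C\rho_\kappa^{1/2}$. For $\rho_\kappa$ large the $N$-term dominates, giving $c\rho_\kappa \leq \gamma(\rho_\kappa) \leq C\rho_\kappa$. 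For $k \geq 1$, $|\partial_{\rho_\kappa}^{k+1}N| \leq C\rho_\kappa^{1-k}$ and $|\partial_{\rho_\kappa}^{k+1}M_\kappa| \leq C\rho_\kappa^{1/2}$, so the $M_\kappa$-contribution governs and $|\gamma^{(k)}(\rho_\kappa)| \leq C\rho_\kappa^{1/2}$ follows.

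For $F_1$ and $F_2$, I would first control the flow of (\ref{P02}) over $\tau\in[0,2\pi]$. Using the refined $\kappa$-dependent scaling $|\partial_{\rho_\kappa}P_\kappa|, |\partial_{\theta_\kappa}P_\kappa| = O(\rho_\kappa^{1/2-\kappa/2})$ (which follows by inspecting the recursive formula for $P_\kappa$ in the proof of Lemma \ref{L15}, where $Q_\kappa$ enters linearly and is of order $\rho^{-\kappa/2}$), a Gronwall-type argument on the integrated equations yields
$|\rho_\kappa(\tau)-\rho_\kappa| \leq C\rho_\kappa^{1/2-\kappa/2}$ and
$|\theta_\kappa(\tau)-\theta_\kappa - \gamma(\rho_\kappa,\tau)| \leq C\rho_\kappa^{1/2-\kappa/2}$. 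Then $|F_2| \leq 2\pi\sup|\partial_{\theta_\kappa}P_\kappa| \leq C\rho_\kappa^{1/2-\kappa/2}$ immediately. For $F_1$, decompose as
$F_1 = \int_0^{2\pi}\partial_{\rho_\kappa}P_\kappa\,d\tau + \int_0^{2\pi}\bigl[\partial_{\rho_\kappa}(N+M_\kappa)(\rho_\kappa(\tau),\tau) - \partial_{\rho_\kappa}(N+M_\kappa)(\rho_\kappa,\tau)\bigr]d\tau$; the first piece is bounded exactly as $F_2$, and the second by a first-order Taylor expansion combined with the flow bound and the second-derivative estimate on $N+M_\kappa$.

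Derivative bounds $|\partial_{\rho_\kappa}^k\partial_{\theta_\kappa}^j F_i|$ follow by differentiating under the integral and invoking the variational equations for $\partial_{\rho_\kappa}\rho_\kappa(\tau)$, $\partial_{\theta_\kappa}\rho_\kappa(\tau)$, etc., with a routine induction on $k+j$. The smoothness ranges $k+j \leq \Upsilon_1-\nu-\kappa-3$, $j \leq \nu-1$, $l \leq \Upsilon_2-\nu-\kappa-?$ are dictated exactly by the losses accumulated through Lemma \ref{P5s} (which costs $\nu$ orders in $t$), the $\kappa$ further iterations of Lemma \ref{L15} (each costing one order in $t$ and $\rho$), and the single additional order of $\theta$-differentiation needed here. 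The main technical obstacle is establishing the sharp $\kappa$-dependent scaling $\rho^{1/2-\kappa/2}$ of $P_\kappa$ and its low-order derivatives --- the uniform bound $\rho^{-1/2}$ in (\ref{ite02}) is not tight enough for $\kappa \geq 3$ and must be sharpened by unwinding the recursion for $P_\kappa$; once this is in place, all estimates in Lemma \ref{L16} are routine.
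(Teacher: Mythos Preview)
Your approach is essentially the same as the paper's, which gives no detail beyond ``by direct computations'': split $\gamma$ into its $N$- and $M_\kappa$-parts, control the flow of (\ref{P02}), and read off the bounds on $F_1,F_2$ from those on $P_\kappa$. Your observation that the stated inductive bound (\ref{ite02}) is not sharp enough and that one really needs $P_\kappa=O(\rho_\kappa^{1/2-\kappa/2})$ is correct and is exactly what the paper is implicitly using---note that the proof of Lemma~\ref{L15} already records the sharper estimate $|Q_\kappa|\le C\rho_\kappa^{-\kappa/2}$, which encodes precisely this $\kappa$-dependent decay; with that in hand the rest of your sketch goes through.
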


\vskip 0.4cm

    Moreover, the following twist condition holds true.
 \begin{lemma}\label{L17}
Given $A_1$ large enough, there exists an interval $[A_1,\
A_1+A_1^{-\frac{2}{3}}]$ such that
 for $\rho_{\kappa}\in[A_1,\
A_1+A_1^{-\frac{2}{3}}]$ and $\theta_\kappa\in \mathbb{S}^1$ it holds that
% $$c\rho_\kappa\leq \gamma(\rho_\kappa)\leq C\rho_\kappa;$$
 $$c\leq\Big| \gamma'(\rho_\kappa)\Big|\leq C.$$
%   $$\Big|\partial^k_{\rho_\kappa}\partial^l_{\theta_\kappa} F_1(\rho_\kappa,\theta_\kappa)\Big|\leq C\rho_\kappa^{-\frac{\kappa}{2}},\ \Big|\partial^k_{\rho_\kappa}\partial^l_{\theta_\kappa} F_2(\rho_\kappa,\theta_\kappa)\Big|\leq C\rho_\kappa^{-\frac{\kappa}{2}},\ l=0,1,2,\ldots,\nu-1.$$
    \end{lemma}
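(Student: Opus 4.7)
\textbf{Proof proposal for Lemma \ref{L17}.}
The plan is a two-step argument combining the second-derivative bound from Lemma \ref{L16} with the leading asymptotic of $\gamma(\rho_\kappa)$. The first step is to show that $\gamma'$ is nearly constant on the interval $[A_1,A_1+A_1^{-2/3}]$, so that controlling $\gamma'$ at one point controls it everywhere on the interval. Indeed, Lemma \ref{L16} gives $|\gamma''(\rho_\kappa)|\leq C\rho_\kappa^{1/2}$, so for any $\rho,\rho'\in[A_1,A_1+A_1^{-2/3}]$,
$$|\gamma'(\rho)-\gamma'(\rho')|\leq\sup_{s\in[A_1,A_1+A_1^{-2/3}]}|\gamma''(s)|\cdot|\rho-\rho'|\leq CA_1^{1/2}\cdot A_1^{-2/3}=CA_1^{-1/6},$$
which tends to $0$ as $A_1\to\infty$.

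The second step is to show that $\gamma'(\rho_\kappa)\to c_0>0$ as $\rho_\kappa\to\infty$ for an explicit positive constant $c_0$. Writing $\gamma(\rho_\kappa)=\int_0^{2\pi}\partial_{\rho_\kappa}(N+M_\kappa)(\rho_\kappa,\tau)\,d\tau$, one extracts the dominant behaviour of $N$ from the implicit relation $\alpha(N(\rho,\tau),\tau)=\rho$. Unwinding the transformations back to $\alpha(h,t)=-[f_1](h)-[f_2](h,t)+\cdots$, Lemmas \ref{L02} and \ref{LLc-1} give $\alpha(h,t)=A(t)h^{1/2}(1+o(1))$ with $A(t)$ bounded away from zero by the strict Lazer--Leach inequality (\ref{LLc}). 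Inversion yields $N(\rho,\tau)=A(\tau)^{-2}\rho^2(1+o(1))$, hence $\partial_{\rho_\kappa}^2 N(\rho_\kappa,\tau)\to 2A(\tau)^{-2}$ and, once the $M_\kappa$-contribution is shown to be of lower order after $\tau$-integration,
$$\gamma'(\rho_\kappa)\longrightarrow c_0:=\int_0^{2\pi}2A(\tau)^{-2}\,d\tau>0.$$

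Combining the two steps, for $A_1$ sufficiently large, $|\gamma'(A_1)-c_0|<c_0/4$ and $|\gamma'(\rho)-\gamma'(A_1)|<c_0/4$ for every $\rho\in[A_1,A_1+A_1^{-2/3}]$, whence $c_0/2\leq\gamma'(\rho)\leq 2c_0$ on the whole interval, which is the bound claimed (with $c=c_0/2$, $C=2c_0$). The main obstacle is the second step: the bound $|\partial^2_{\rho_\kappa}M_\kappa|\leq C\rho_\kappa^{1/2}$ from (\ref{ite01}) is a priori too weak to neglect the $M_\kappa$-contribution when integrated against $d\tau$. One must exploit the averaging structure $M_\kappa=M_{\kappa-1}+[P_{\kappa-1}]$ built through $\Psi_1,\ldots,\Psi_\kappa$, together with the smallness inherited from the oscillating perturbations $P_{\kappa-1}$, to prove that $\int_0^{2\pi}\partial^2_{\rho_\kappa}M_\kappa(\rho_\kappa,\tau)\,d\tau=o(1)$ as $\rho_\kappa\to\infty$, so that the limit $c_0$ is determined solely by the $N$-part and is strictly positive.
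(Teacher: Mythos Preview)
Your Step 1 (the oscillation bound $|\gamma'(\rho)-\gamma'(\rho')|\le CA_1^{-1/6}$ on the short interval) is correct and is exactly what the paper uses at the end. The difficulty is entirely in Step 2, and here your approach diverges from the paper in a way that leaves a genuine gap.

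You aim to prove that $\gamma'(\rho_\kappa)\to c_0>0$, which would require $\int_0^{2\pi}\partial_{\rho_\kappa}^2 M_\kappa(\rho_\kappa,\tau)\,d\tau=o(1)$. You correctly flag this as the obstacle, but you do not overcome it: the only control on $M_\kappa$ provided by (\ref{ite01}) is $|\partial^k_{\rho_\kappa}\partial^l_\tau M_\kappa|\le C\rho_\kappa^{1/2}$, and the recursive structure $M_\kappa=M_{\kappa-1}+[P_{\kappa-1}]$ does not obviously yield decay of the $\tau$-integral of $\partial_{\rho_\kappa}^2 M_\kappa$. So your argument, as it stands, is incomplete, and completing it along the lines you suggest would itself be a nontrivial lemma with no clear proof from the estimates available.

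The paper sidesteps this problem entirely. Rather than proving convergence of $\gamma'$, it decomposes $\gamma=\gamma_1+\gamma_2$ with $\gamma_1(\rho)=\int_0^{2\pi}\partial_\rho N\,d\tau$ and $\gamma_2(\rho)=\int_0^{2\pi}\partial_\rho M_\kappa\,d\tau$, and uses only the \emph{zeroth-order} bound $|\gamma_2|\le C\rho^{1/2}$ (not a bound on $\gamma_2'$). From $c\le|\gamma_1'|\le C$ one gets
\[
\gamma(2A)-\gamma(A)=\int_A^{2A}\gamma_1'\,d\rho+\gamma_2\big|_A^{2A}\ge cA-CA^{1/2}\ge c_1A,
\]
and the Mean Value Theorem then produces a single point $\xi\in(A,2A)$ with $\gamma'(\xi)\ge c$ (and similarly $\gamma'(\xi)\le C$ from the upper estimate). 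One then \emph{chooses} $A_1$ so that $\xi\in[A_1,A_1+A_1^{-2/3}]$, and your Step~1 propagates the bound from $\xi$ to the whole interval.

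Note that this also clarifies the statement: $A_1$ is not an arbitrary large number for which the conclusion holds, but a specific large number (produced by the argument) for which it holds. Your reading of the lemma as ``for every large $A_1$'' is what forces you toward the stronger convergence claim; the paper only asserts existence of arbitrarily large such $A_1$, and that is exactly what the Mean Value argument delivers.
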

\begin{proof}
   Denote
   $\gamma(\rho_\kappa)=\gamma_1(\rho_\kappa)+\gamma_2(\rho_\kappa)$
   with
$$\gamma_1(\rho_\kappa)=\int_0^{2\pi}\partial_{
\rho_\kappa}N(\rho_\kappa,s)ds, \ \ \
\gamma_2(\rho_\kappa)=\int_0^{2\pi}\partial_{
\rho_\kappa}M_{\kappa}(\rho_\kappa,s)ds. $$ It is easy to verify
that $$c\leq\Big| \gamma_1'(\rho_\kappa)\Big|\leq C,$$
and
$$\Big|
\gamma_2^{(k)}(\rho_\kappa)\Big|\leq C\rho_\kappa^{\frac{1}{2}},\ k\leq{\Upsilon_1}-\nu-\kappa-2.$$
Note that, for $A$ large enough,
\begin{eqnarray}\label{}
 \nonumber \gamma(\rho_\kappa)\Big|_{A}^{2A}& =&
\int_{A}^{2A}\gamma_1'(\rho_\kappa) d\rho_\kappa+\gamma_2(\rho_\kappa)\Big|_{A}^{2A}\\
 \nonumber\\
 \nonumber&\geq&
cA-C A^{\frac{1}{2}}\\
 \nonumber\\
 \nonumber &\geq&
c_1A.
\end{eqnarray}
By Mean Value Theorem of the integral, there exists a point
$\xi\in(A,2A)$, such that $\gamma'(\xi)\geq c>0.$ Note that
$|\gamma''(\rho_\kappa)|\leq C\rho_\kappa^{\frac{1}{2}}.$
Therefore, we can choose $A_1$ such that $\xi\in[A_1,\
A_1+A_1^{-\frac{2}{3}}]\subset[A,2A]$. This ends the proof of this lemma.  \end{proof}

 Finally, for $A_1\gg1, $ let
$\gamma(\rho_\kappa)=\gamma(A_1)+A_1^{-1}\lambda,\
\lambda\in[1,2]$. Denote $\rho_\kappa=\rho_\kappa(\lambda),\
\lambda\in[1,2],$ then $\rho_\kappa(\lambda)\subset[A_1,\
A_1+A_1^{-\frac{2}{3}}],\ \lambda\in[1,2],$ and
 \begin{equation}\label{lambda}
|\rho_\kappa^{(k)}(\lambda)|\leq CA_1^{-\frac{k+1}{2}},\ k\leq{\Upsilon_1}-\nu-\kappa-2.
      \end{equation}

      In fact, for $k=1$, $\gamma'(\rho_\kappa)\rho'_\kappa(\lambda)=A_1^{-1}$.    Thus from Lemma \ref{L17}, we have $\Big|\rho_\kappa'(\lambda)\Big|\leq CA_1^{-1}.$

   For $k=2$,  $\gamma''(\rho_\kappa)(\rho'_\kappa(\lambda))^2+\gamma'(\rho_\kappa)\rho''_\kappa(\lambda)=0$, then from Lemma \ref{L17} we have $\Big|\rho_\kappa''(\lambda)\Big|\leq CA_1^{-\frac{3}{2}}.$

 Similarly, (\ref{lambda}) is obtained inductively.

Denote $\phi=\theta_{\kappa}$, then the map (\ref{P03}) is changed
into
 \begin{equation}\label{P04}\left\{ \begin{array}{l}
 \phi_{+}=\phi+\gamma(A_1)+A_1^{-1}\lambda+\widetilde{F}_1(\lambda,\phi)\\
   \\
    \displaystyle    \lambda_+=\lambda+\widetilde{F}_2(\lambda,\phi)
     \end{array}\right.
      \end{equation}
      with
      $$\widetilde{F}_1(\lambda,\phi)={F}_1(\rho_\kappa(\lambda),\phi),$$
      $$\widetilde{F}_2(\lambda,\phi)=A_1\Big(\gamma(\rho_\kappa(\lambda)+F_2(\rho_\kappa(\lambda),\theta_{\kappa}))-\gamma(\rho_\kappa(\lambda)\Big).$$
For large $A_1$, by a direct computation, from (\ref{lambda}) and Lemma \ref{L16} we have
$$\Big| \widetilde{F}_1(\lambda,\phi)\Big|\leq CA_1^{\frac{1}{2}-\frac{\kappa}{2}},$$

$$\Big| \widetilde{F}_2(\lambda,\phi)\Big|\leq CA_1^{\frac{3}{2}-\frac{\kappa}{2}},$$
and  for $k+j\leq{\Upsilon_1}-\nu-\kappa-3,\ j\leq\nu-1,$
   $$\Big|\partial^k_{\lambda}\partial^j_{\phi} \widetilde{F}_1(\lambda,\phi)\Big|\leq CA_1^{\frac{1}{2}-\frac{\kappa}{2}-\frac{k+1}{2}},\quad \Big|\partial^k_{\lambda}\partial^j_{\phi} \widetilde{F}_2(\lambda,\phi)\Big|\leq CA_1^{\frac{3}{2}-\frac{\kappa}{2}-\frac{k}{2}}.$$

\noindent \textbf{Proof of Theorem \ref{Th1}.}

 From (\ref{par01}), we have that for $\lambda\in[1,2]$, the map
(\ref{P04}) is close to a small twist map in $C^4$ topology
provided that $\Upsilon_1\geq18$ and
 ${\Upsilon_2}\geq14.$ Moreover, it has the
intersection property, thus the assumptions of Moser's Small Twist
Theorem \cite{Mos1,Rus} are met.
More precisely, given any number $\lambda\in[1,2]$ satisfying
$$\Big|\gamma(A_1)+A_1^{-1}\lambda-\frac{p}{q}\Big|>c|q|^{-5/2}$$
for all integers $p$ and $q\neq0$, there exists a $\mu(\varphi)\in
C^3(\mathbb{R}\setminus2\pi\mathbb{Z})$ such that the curve
$\Gamma=\{(\varphi,\mu(\varphi))\}$ is invariant under the mapping
(\ref{P04}). The image point of a point on $\Gamma$ is obtained by
replacing $\varphi$ by $\varphi+\gamma(A_1)+A_1^{-1}\lambda$.
Hence the system with Hamiltonian $h_6$ has
invariant curve with frequency $\gamma(A_1)+A_1^{-1}\lambda$. Then
the system with Hamiltonian $I_6$ has invariant
curve with frequency $(\gamma(A_1)+A_1^{-1}\lambda)^{-1}$, which implies the systems $I_1$ has invariant curve with frequency
$1+(\gamma(A_1)+A_1^{-1}\lambda)^{-1}$. Consequently the original
system $H$ has invariant curve with frequency
$\omega=\frac{\gamma(A_1)+A_1^{-1}\lambda}{1+\gamma(A_1)+A_1^{-1}\lambda}$.
Note that $I\rightarrow\infty\ as\ A_1\rightarrow\infty.$  It
means that we have found arbitrarily large amplitude invariant
tori in $(x,y,t\ \mbox{mod} 1)$ space, which implies the
boundedness of all the solutions.
Thus
 the proof of Theorem \ref{Th1} is finished.

\section{{On the critical situation}}\label{s6}
 {In this section, we give the  proof of Theorem \ref{Th2} on the critical situation when (\ref{LL0}) holds.}
 {We divide the whole proof into the ``bounded" part and the ``unbounded" part as follows.}
\subsection{ {Bounded results for $0<d<1$}}
 {For the reason that many estimates in this situation are the same as those of Theorem \ref{Th1},
 we will omit the proof of them and pay our attention to the difference between the proofs of   two theorems.}

%{\it Sketch of the proof {for stable results}}
\vskip 0.4cm
{\it Step 1.}\quad\emph{Action-angle coordinates}
\vskip 0.4cm
Equation (\ref{semi}) is equivalent to   Hamiltonian system
$$
\dot{x}=ny,\qquad \dot{y}=-nx-\frac{1}{n}g(x)+\frac 1n p(t)
$$
with   Hamiltonian
\begin{equation}\label{H0}
  \nonumber H_0=\frac{n^2}{2}(x^2+y^2)+\frac 1n G(x)-\frac 1n x p(t).
\end{equation}
Under action-angle coordinates transformation
$$
x=\sqrt{\frac{2}{n}}I^{\frac 12}\cos{n\theta},\qquad
y=\sqrt{\frac{2}{n}}I^{\frac 12}\sin{n\theta},
$$
$H_0$ is transformed into
\begin{equation}\label{H1}
  \nonumber H_1(I,\theta, t)=I+\frac 1n G(\sqrt{\frac{2}{n}}I^{\frac
12}\cos{n\theta})-\frac 1n \sqrt{\frac{2}{n}}I^{\frac
12}\cos{n\theta} p(t).
\end{equation}
\vskip 0.4cm
{\it Step 2.}\quad\emph{A sublinear system}
\vskip 0.4cm
Now we exchange the roles of angle and time variables. From the argument in Section 2, we have that the Hamiltonian system with Hamiltonian $H_1$ is equivalent to the one with the following Hamiltonian
\begin{equation}\label{I_0}
  \nonumber I_0=h-f_1(h,\theta)-f_2(h,\theta, t)+R_0(h,\theta, t),
\end{equation}
where
$$
f_1(h,\theta)=\frac 1n G(\sqrt{\frac{2}{n}}I^{\frac
12}\cos{n\theta}),\qquad f_2(h, \theta, t)=-\frac 1n
\sqrt{\frac{2}{n}}I^{\frac 12}\cos{n\theta} p(t).
$$
Moreover, $f_1$ satisfies the estimates in Lemma \ref{L01} and the following holds true for $R_{0}$:
\begin{equation}\label{R_0}
  \nonumber |\partial_h^k\partial_t^l\partial_{\theta}^j R_0|\le C\cdot
h^{-k+\frac j2},\quad k+j\le \Upsilon_1+1,\ l\le \Upsilon_2.
\end{equation}

Then with a rotation transformation
$$
 h=h_1,\qquad
      t=t_1+\theta,
      $$
      the Hamiltonian system with the Hamiltonian $I_0$ is equivalent to the one with     Hamiltonian
 \begin{equation}\label{I_1}
   \nonumber I_1=-f_1(h_1, \theta)-f_2(h, \theta+t_1, \theta)+R_1(h_1, t_1, \theta),
 \end{equation}
 where $R_1$ satisfies
 \begin{equation}\label{R_1}
  \nonumber |\partial_{h_1}^k\partial_{t_1}^l\partial_{\theta}^jR_1|\le C\cdot h_1^{-k+\frac j2},\qquad k+j\le \Upsilon_1+1,\ l\le \Upsilon_2.
 \end{equation}
 \vskip 0.4cm
 {\it Step 3.}\quad    \emph{Some canonical transformations}
\vskip 0.4cm
By the transformation $\Phi_2$ defined as in (\ref{S02}), we obtain a new Hamiltonian as follows:
\begin{equation}\label{I_2}
  \nonumber I_2(h_2, t_2, \theta)=-[f_1](h_2)-f_2(h_2, \theta+t_2, \theta)+R_2(h_2, t_2, \theta)
\end{equation}
with the following estimates
\begin{equation}\label{R_2}
  \nonumber |\partial_{h_2}^k\partial_{t_2}^l\partial_{\theta}^jR_2|\le C\cdot
h_2^{-k+\frac j2},\quad k+j\le \Upsilon_1,\ l\le \Upsilon_2-1.
\end{equation}
Under the transformation $\Phi_3$ defined as in (\ref{T03}), $I_2$ is changed into
\begin{equation}\label{I_3}
  \nonumber I_3(h_3, t_3, \theta)=-[f_1](h_3)-[f_2](h_3, t_3)+R_3(h_3, t_3, \theta),
\end{equation}
where $[f_2]$ satisfies the estimates in Lemma \ref{LLc-1} and $R_3$ satisfies
\begin{equation}\label{R_3}
  \nonumber |\partial_{h_3}^k\partial_{t_3}^l\partial_{\theta}^jR_3|\le C\cdot
h_3^{-k+\frac j2},\qquad k+j\le \Upsilon_1,\ l\le \Upsilon_2-1.
\end{equation}

Denote
$\beta(h_3)=[f_1](h_3)-\frac{\sqrt{2}}{\pi}n^{-\frac{3}{2}}
h_3^{\frac 12}\cdot\big|g(+\infty)-g(-\infty)\big|$ and
$$
a(t_3)=\frac{\sqrt{2}}{2\pi}n^{-\frac{3}{2}}\Big(2\big|g(+\infty)-g(-\infty)\big|-\big(\cos
nt_3 \int_0^{2\pi}p({ {s}})
 \cos   n{ {s}} d{ {s}}+\sin nt_3 \int_0^{2\pi}p({ {s}})\sin n{ {s}} d{ {s}}\big)\Big).
 $$
Then it holds that
\begin{equation}\label{f1+f2}
  \nonumber -\alpha(h_3,t_3):=[f_1]+[f_2]=a(t_3)\cdot h_3^{\frac
12}+\beta(h_3).
\end{equation}
Denote $ {A:=\Big |\int_0^{2\pi}p(t)e^{int}dt \Big
  |=2\big|g(+\infty)-g(-\infty)\big|,}$ from (\ref{LL0}), we have that
$$a(t_3)=\frac{\sqrt{2}}{2\pi}n^{-\frac{3}{2}}A(1-\cos(nt_3 +\xi))
$$
 with
   $\tan \xi=\frac{\int_0^{2\pi}p({ {s}})\sin( n{ {s}})d{ {s}}}{\int_0^{2\pi}p({ {s}})\cos (n{ {s}})d{ {s}}}.$
Obviously, $a(t_3)\ge 0$ and $a(t_3)=0$ if and only if the
following holds true:
\begin{equation}\label{t^*}
  \nonumber (\cos(nt_3),\ \sin(nt_3))=\frac{(\int_0^{2\pi}p({ {s}})\cos(n{ {s}})d{ {s}},\ \ \int_0^{2\pi}p({ {s}})\sin(n{ {s}})d{ {s}})}{ \Bigg |\int_0^{2\pi}p(t)e^{int}dt \Bigg
  |}.
  \end{equation}
  Moreover, from (\ref{G+-}) we obtain that
  \begin{lemma}For $h_3$ large enough, $\beta(h_3)$ satisfies
  \begin{equation}\label{betah12}
   \nonumber |\beta^{(k)}(h_3)|\ge  c\cdot h_3^{\frac{1-d}{2}-k}, \qquad k=0,\ 1,\ 2
  \end{equation}
  and
  \begin{equation}\label{betah}
   \nonumber |\beta^{(k)}(h_3)|\le C\cdot h_3^{\frac{1-d}{2}-k},\qquad k\le
  \Upsilon_1+1.
  \end{equation}
\end{lemma}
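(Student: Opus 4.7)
The plan is to unpack $[f_1](h_3)$ via its definition as the angular average of $f_1(h_3,\theta)=\frac{1}{n}G\bigl(\sqrt{2/n}\,h_3^{1/2}\cos n\theta\bigr)$, substitute the asymptotic decomposition of $G$ furnished by condition $(\ref{G+-})$, and then read off both the principal term (which is subtracted off to form $\beta$) and the actual leading behaviour of $\beta$. Writing $r=\sqrt{2/n}\,h_3^{1/2}$, I would split $[0,2\pi]=\Omega_+\cup\Omega_-\cup\Omega_0$ where $\Omega_{\pm}=\{\pm\cos n\theta>h_3^{-\alpha}\}$ for some fixed $d/4<\alpha<1/2$, and $\Omega_0$ is the remaining neighbourhood of the zeros of $\cos n\theta$, whose total length is $O(h_3^{-\alpha})$. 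On $\Omega_{\pm}$, where $x=r\cos n\theta$ is genuinely large, the identity
\[
G(x)=g(\pm\infty)\,x+c_{\pm}(1+x^2)^{\frac{1-d}{2}}+G_{\pm}(x)
\]
(which is the definition of $G_{\pm}$) gives a usable expansion; on $\Omega_0$, I will use only the crude bound $|G(x)|\le C|x|\le Cr\,h_3^{-\alpha}$ coming from boundedness of $g$.

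Carrying out the integration on $\Omega_{\pm}$, the contribution of the linear part $g(\pm\infty)\,x$ produces exactly $\frac{\sqrt 2}{\pi}n^{-3/2}h_3^{1/2}\bigl(g(+\infty)-g(-\infty)\bigr)$ (up to an $\Omega_0$ remainder of size $O(h_3^{1/2-2\alpha})$, which is $o(h_3^{(1-d)/2})$ by the choice of $\alpha$). This is precisely the quantity subtracted off in the definition of $\beta(h_3)$, so it cancels. The residual main term is
\[
\frac{1}{2\pi n}\int_0^{2\pi}\!\!\bigl[c_+\chi_{\{\cos n\theta>0\}}+c_-\chi_{\{\cos n\theta<0\}}\bigr]\bigl(1+r^2\cos^2 n\theta\bigr)^{\frac{1-d}{2}}d\theta,
\]
which by the homogeneity $(1+r^2\cos^2 n\theta)^{(1-d)/2}\sim r^{1-d}|\cos n\theta|^{1-d}$ for $|\cos n\theta|\gg r^{-1}$ equals $C_*(c_+,c_-,d,n)\,h_3^{(1-d)/2}$ with a strictly positive constant $C_*$ (since the integrand is positive and the measure where $|\cos n\theta|$ is not tiny has positive lower bound). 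The remaining contribution from $G_{\pm}(r\cos n\theta)$ is controlled by $(\ref{G+-})$: for $\cos n\theta\in\Omega_{\pm}$, $|G_{\pm}(r\cos n\theta)|=o((r|\cos n\theta|)^{1-d})=o(h_3^{(1-d)/2}|\cos n\theta|^{1-d})$, which averages to $o(h_3^{(1-d)/2})$. Thus $\beta(h_3)=C_* h_3^{(1-d)/2}(1+o(1))$, giving the $k=0$ lower bound and the upper bound with the correct constant.

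For $k\ge 1$, I would differentiate under the integral. Each $\partial_{h_3}$ brings down a factor $\tfrac{1}{2}h_3^{-1/2}\cos n\theta$ and replaces $G$ by $g=G'$, or more generally after $k$ differentiations by a linear combination of $G^{(j)}(r\cos n\theta)(\cos n\theta)^j r^{j-2k}$ with $1\le j\le k$. Condition $(A_1)$ together with $(\ref{G+-})$ guarantees that differentiating the decomposition of $G$ yields
\[
G^{(k)}(x)=\bigl(c_{\pm}(1+x^2)^{\frac{1-d}{2}}\bigr)^{(k)}+G_{\pm}^{(k)}(x)+g(\pm\infty)\delta_{k,1}
\]
with $G_{\pm}^{(k)}(x)=o(|x|^{1-d-k})$. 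The first summand is a homogeneous function of degree $1-d-k$ in $x$ (up to lower order), which after angular averaging produces a nonzero constant multiple of $h_3^{(1-d)/2-k}$; the $o$-term and the $\Omega_0$ contribution are genuinely lower order by the same $\alpha$-splitting, provided that for $k=1,2$ the residual from $\Omega_0$ (which behaves like $h_3^{1/2-k-(1+2k)\alpha}$ after accounting for the $r^{j-2k}$ prefactors and the length of $\Omega_0$) is strictly smaller than $h_3^{(1-d)/2-k}$; this requires choosing $\alpha$ slightly larger than some $\alpha_0(d,k)$ still $<1/2$, which is possible since $d<1$. I expect the main technical nuisance (rather than a conceptual obstacle) to be precisely this bookkeeping of the small-cosine region for $k=1,2$, and simultaneously verifying the upper bound for all $k\le\Upsilon_1+1$, since for high $k$ the $r^{j-2k}$ prefactors become very negative and the powers need to be tracked carefully; the hypothesis $k-1+d\le \Upsilon_1+d$ in $(\ref{G+-})$ is tailored exactly to make this estimate close.
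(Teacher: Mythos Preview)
Your approach is correct and takes a somewhat different route from the paper. You work directly with $\beta(h_3)$ via the decomposition $G(x)=g(\pm\infty)x+c_{\pm}(1+x^2)^{(1-d)/2}+G_{\pm}(x)$ and an $\Omega_0$--splitting of the $\theta$-interval, extracting the principal term $C_*h_3^{(1-d)/2}$ explicitly. The paper instead starts one derivative up: it computes $\beta'(h_3)$ by writing the integrand as $(g(x)-g(\pm\infty))\cos n\theta$ over the half-periods where $\cos n\theta$ has fixed sign, multiplies by $(2h_3/n)^{d/2}$, and uses $(\ref{G+-})$ with $k=1$ (equivalently $x^d(g(x)-g(\pm\infty))\to c_{\pm}(1-d)$) together with dominated convergence to get $h_3^{(1+d)/2}\beta'(h_3)\to \mathrm{const}\cdot(c_++c_-)$; the $k=0$ bound then drops out of L'Hospital's rule. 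The paper's route sidesteps both the $\Omega_0$ bookkeeping and the integration of $G_{\pm}'=o(|x|^{-d})$ back to $G_{\pm}=o(|x|^{1-d})$ (which is where you use $d<1$), while your route is structurally uniform across all $k$ and makes the origin of the exponent $(1-d)/2$ more transparent. One point worth tightening in your argument: for the lower bound at $k=1,2$ you assert the averaged coefficient is nonzero without justification; the cleanest way to see this is to write $(1+(r\cos n\theta)^2)^{(1-d)/2}=(1+u)^{(1-d)/2}$ with $u=\tfrac{2}{n}h_3\cos^2 n\theta$ linear in $h_3$, so that $\partial_{h_3}^k$ produces the single term $\tfrac{(1-d)(-1-d)\cdots(3-2k-d)}{2^k}\bigl(\tfrac{2}{n}\cos^2 n\theta\bigr)^k(1+u)^{(1-d)/2-k}$, whose Pochhammer prefactor is nonzero since $d\ne 1$.
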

\begin{proof}Without loss of generality, assume $g(+\infty)\ge g(-\infty)$.
Note that
  \begin{equation}\label{}
  \nonumber \beta(h_3)=\frac{1}{2n\pi}\int_0^{2\pi}G(\sqrt{\frac{2}{n}}h_3^{\frac{1}{2}}\cos{n\theta})d\theta-\frac{\sqrt{2}}{\pi}n^{-\frac{3}{2}}\big(g(+\infty)-g(-\infty)\big)\cdot
h_3^{\frac 12}.
  \end{equation}
 {Then we have}
   \begin{eqnarray}\label{}
\nonumber\beta'(h_3)&=&\frac{\sqrt{2}}{4\pi}n^{-\frac{3}{2}}h_3^{-\frac{1}{2}}\Big(\int_0^{2\pi}g(\sqrt{\frac{2}{n}}h_3^{\frac{1}{2}}\cos{n\theta})\cos{n\theta}d\theta-2\big(g(+\infty)-g(-\infty)\big)
\Big)\\
\nonumber&=&\frac{\sqrt{2}}{4\pi}n^{-\frac{3}{2}}h_3^{-\frac{1}{2}}\sum_{k=1}^n\Big(J_{k+}(h_3) {-}J_{k-}(h_3)\Big)
  \end{eqnarray}
  where
     \begin{eqnarray}\label{}
\nonumber J_{k+}(h_3)&=&\int_{\frac{2k\pi}{n}-\frac{\pi}{2n}}^{\frac{2k\pi}{n}+\frac{\pi}{2n}}\Big(g(\sqrt{\frac{2}{n}}h_3^{\frac{1}{2}}\cos{n\theta})- g(+\infty)\Big)\cos{n\theta}d\theta,\\
\nonumber J_{k-}(h_3)&=&\int_{\frac{2k\pi}{n}+\frac{\pi}{2n}}^{\frac{2k\pi}{n}+\frac{3\pi}{2n}}\Big(g(\sqrt{\frac{2}{n}}h_3^{\frac{1}{2}}\cos{n\theta})-g(-\infty)\Big)\cos{n\theta}d\theta.  \end{eqnarray}
  From (\ref{G+-}),
   \begin{eqnarray}\label{}
\nonumber (\frac{2}{n}h_3)^{\frac{d}{2}}J_{k+}(h_3)&=&\int_{\frac{2k\pi}{n}-\frac{\pi}{2n}}^{\frac{2k\pi}{n}+\frac{\pi}{2n}}\Big(g(\sqrt{\frac{2}{n}}h_3^{\frac{1}{2}}\cos{n\theta})- g(+\infty)\Big)(\sqrt{\frac{2}{n}}h_3^{\frac{1}{2}}\cos{n\theta})^{d}\cos^{1-d}{n\theta}d\theta\\
\\
\nonumber &\rightarrow&s(d)c_+,\ \mbox{as}\ h_3\rightarrow\infty.\end{eqnarray}
with $s(d)$ some positive constant.
Similarly, we have
   \begin{eqnarray}\label{}
\nonumber (\frac{2}{n}h_3)^{\frac{d}{2}}J_{k-}(h_3)&=&\int_{\frac{2k\pi}{n}+\frac{\pi}{2n}}^{\frac{2k\pi}{n}+\frac{3\pi}{2n}}\Big(g(\sqrt{\frac{2}{n}}h_3^{\frac{1}{2}}\cos{n\theta})- g(-\infty)\Big)(\sqrt{\frac{2}{n}}h_3^{\frac{1}{2}}\cos{n\theta})^{d}\cos^{1-d}{n\theta}d\theta\\
\\
\nonumber &\rightarrow&-s(d)c_-,\ \mbox{as}\ h_3\rightarrow\infty.\end{eqnarray}
Thus
\begin{eqnarray}\label{bbeta1}
 (\frac{2}{n}h_3)^{\frac{1+d}{2}}\beta'(h_3)&\rightarrow&\frac{1}{2n\pi}s(d)(c_+ {+}c_-),\ \mbox{as}\ h_3\rightarrow\infty,\end{eqnarray}
which means that
\begin{equation}\label{}
   \nonumber c\cdot h_3^{\frac{1-d}{2}-1}\le |\beta'(h_3)|\le C\cdot h_3^{\frac{1-d}{2}-1}.
  \end{equation}
  With L'Hospital's rule, (\ref{bbeta1}) implies
  \begin{equation}\label{}
   \nonumber c\cdot h_3^{\frac{1-d}{2}}\le |\beta(h_3)|\le C\cdot h_3^{\frac{1-d}{2}}.
  \end{equation}

  Finally, with  (\ref{G+-}) and the method above, the rest of estimates on $\beta^{(k)}(h_3)$ is obtained.
\end{proof}
  Consequently, $I_3$ is rewritten by
\begin{equation}\label{I_3-1}
  \nonumber I_3(h_3, t_3, \theta)=\alpha(h_3, t_3)+R_3(h_3, t_3, \theta),
\end{equation}
with {a weaker twist condition compared with (\ref{alpha02})}:
 \begin{equation}\label{alphah12}
 |\partial^k_{h_3}\alpha(h_3,t_3)|\ge  c\cdot h_3^{\frac{1-d}{2}-k}, \qquad k=0,\ 1,\
  2,
  \end{equation}
  and for $  k\le
  \Upsilon_1+1,\ l\le
  \Upsilon_2,$
  \begin{equation}\label{alphah}
  |\partial^k_{h_3}\partial^l_{t_3}\alpha(h_3,t_3)|\le C\cdot
  h_3^{\frac{1}{2}-k}.
  \end{equation}

  \vskip 0.4cm
  {\it Step 4.}\quad \emph{A nearly integrable system}
\vskip 0.4cm
  Similar to Subsection \ref{4.3}, we have an iteration lemma as follows.
  \begin{lemma}\label{L6.2}
Assume  Hamiltonian
$$
I=\alpha(h, t)
    +R(h,t,\theta)
$$
with $\alpha$ satisfying (\ref{alphah12}), (\ref{alphah}) for $
k\le
  m,\ l\le
  n$,  and
$R(h,t,\theta)$ satisfying
$$\Big|\partial^k_{h}\partial^l_{t}\partial^j_\theta R\Big|\leq Ch^{-\frac{i}{2}-k},\ j=0,\ 1,\ \cdots,\ i$$
 for $h$ large enough,\  $ k+j\leq
m_1$,\ $l\leq n_1$$(m_1\le m,\ n_1\le n)$.

Then there exists a  transformation
 $\Phi_{+}:(h_+,\ t_+,\ \theta)\rightarrow(h,\ t,\ \theta)$, such that
\begin{eqnarray}\label{}
   \nonumber
   I_{+}(h_{+},t_{+},\theta)&=&I\circ\Phi_{+}(h_,\ t,\ \theta)\\
    \nonumber &=& \alpha_+(h_{+},\ t_{+})
    +R_{+}(h_{+},\ t_{+},\ \theta),
 \end{eqnarray}
 with $\alpha_+(h_{+},\ t_{+})=\alpha(h_{+},\ t_{+})+[R](h_{+},\
 t_{+})$ satisfying (\ref{alphah12}) and (\ref{alphah}) for $
k\le
  m_1,\ l\le
  n_1$.
Moreover for $h_{+}\gg 1$,  $l\leq m_1-1$, $ k+j\leq n_1-1$, it
holds that
$$\Big|\partial^k_{h_+}\partial^l_{t_+}\partial^j_\theta R_+\Big|\leq Ch_+^{-\frac{i+1}{2}-k},\ j=0,\ 1,\ \cdots, \ i+1.$$
     \end{lemma}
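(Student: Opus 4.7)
The plan is to mimic the one-step iteration procedure of Lemma \ref{P53-1}, adapted to the sublinear twist structure (\ref{alphah12})--(\ref{alphah}). First, I would introduce the generating function
\[
S_+(h_+,t,\theta)=-\int_0^{\theta}\bigl(R(h_+,t,\theta')-[R](h_+,t)\bigr)\,d\theta',
\]
and the implicit canonical transformation $\Phi_+:(h_+,t_+,\theta)\mapsto(h,t,\theta)$ defined by $h=h_++\partial_t S_+$ and $t_+=t+\partial_{h_+}S_+$. The estimates on $R$ give directly $|\partial_{h_+}^k\partial_t^l\partial_\theta^j S_+|\le C h_+^{-i/2-k}$ for $j\le i+1$, gaining one derivative in $\theta$ essentially for free (since $\partial_\theta S_+=-(R-[R])$). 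Invertibility of $\Phi_+$ on the domain $h_+\gg 1$ then follows by standard implicit-function arguments because $\partial_t S_+$ and $\partial_{h_+}S_+$ tend to zero.

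Next, I would compute
\[
I_+(h_+,t_+,\theta)=\alpha\bigl(h_++\partial_t S_+,t_+-\partial_{h_+}S_+\bigr)+R\bigl(h_++\partial_t S_+,t,\theta\bigr)+\partial_\theta S_+.
\]
Using the defining identity $\partial_\theta S_+=-(R-[R])$ at $(h_+,t,\theta)$ and Taylor expanding the remaining terms around $h_+$ and $t_+$, one reads off $\alpha_+(h_+,t_+)=\alpha(h_+,t_+)+[R](h_+,t_+)$ and collects the residual into $R_+$. The twist condition (\ref{alphah12})--(\ref{alphah}) for $\alpha_+$ follows because $|\partial_{h_+}^k[R](h_+,t_+)|\le C h_+^{-i/2-k}$ decays strictly faster than the lower bound $c\,h_+^{(1-d)/2-k}$ on $\partial_{h_+}^k\alpha$ (for $k\le 2$ and $h_+$ large, using $d<1$), so the leading behavior of $\alpha$ is preserved.

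The core calculation is the estimate on $R_+$. The error terms fall into two classes: differences $\alpha(\cdot)-\alpha(h_+,t_+)$ expanded by Taylor's formula, and differences involving $R$ and $[R]$ expanded similarly. The representative worst contributions are $\partial_{h_+}\alpha\cdot\partial_t S_+$ and $\partial_{t_+}\alpha\cdot\partial_{h_+}S_+$. Using (\ref{alphah}) and the $S_+$ estimates,
\[
|\partial_{h_+}\alpha\cdot\partial_t S_+|\le C h_+^{-1/2}\cdot h_+^{-i/2}=C h_+^{-(i+1)/2},
\qquad
|\partial_{t_+}\alpha\cdot\partial_{h_+}S_+|\le C h_+^{1/2}\cdot h_+^{-i/2-1}=C h_+^{-(i+1)/2},
\]
and all higher-order Taylor remainders, as well as the $R$-quadratic terms, give strictly better bounds. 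Differentiating these identities with respect to $(h_+,t_+,\theta)$ — up to orders dictated by the hypotheses on $\alpha$ and $R$ — and applying Leibniz's rule yields the claimed estimate $|\partial_{h_+}^k\partial_{t_+}^l\partial_\theta^j R_+|\le C h_+^{-(i+1)/2-k}$ for $j\le i+1$, $l\le m_1-1$, $k+j\le n_1-1$, with the loss of one order in each of $l$ and $k+j$ reflecting the differentiation of $\Phi_+$.

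The main obstacle is book-keeping rather than a single deep estimate: one must verify that after differentiation every term in the Taylor expansions respects the sublinear twist balance $\partial_{h_+}\alpha\sim h_+^{-1/2}$ (upper) against $|\partial_{h_+}^k\alpha|\ge c\,h_+^{(1-d)/2-k}$ (lower), so that $\alpha_+$ retains (\ref{alphah12}) while the perturbation gains the $h_+^{-1/2}$ factor. Since the structure is identical to Lemma \ref{P53-1} except for the replacement of the semilinear $\alpha$ by the sublinear one, the verifications are parallel, and the main theorem in the critical bounded case is then obtained by iterating this lemma finitely many times to push the perturbation below the small-twist threshold required by Moser's theorem.
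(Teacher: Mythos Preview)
Your proposal is correct and follows exactly the approach the paper intends: the paper itself gives no separate proof of this lemma, stating only that ``the proof is similar to the one of Lemma~\ref{P53-1}. We omit it.'' Your construction of $S_+$, the canonical transformation $\Phi_+$, the identification of $\alpha_+=\alpha+[R]$, and the isolation of $\partial_h\alpha\cdot\partial_t S_+$ and $\partial_t\alpha\cdot\partial_{h_+}S_+$ as the dominant error terms all match the structure of Lemma~\ref{P53-1}, with the only substantive new check---which you correctly handle---being that the weaker lower bound $|\partial_{h_+}^k\alpha|\ge c\,h_+^{(1-d)/2-k}$ survives the perturbation by $[R]$ since $d<1$.
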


\begin{remark}  The proof is similar to  the one of Lemma
\ref{P53-1}. We omit it. Without loss of generality, $\alpha_+$ is
still denoted by $\alpha$.

 {It is important to repeat this kind of  transformations till the perturbation is sufficiently small such that it will not affect the final normal form--the weak twist condition (\ref{alphah12}), if the smoothness condition allows us to do so.}
\end{remark}
  With a series of  canonical transformations given in Lemma \ref{L6.2} to eliminate the perturbations by $\nu$ times, the Hamiltonian system with the Hamiltonian $I_3$ can be changed into the  one with the following Hamiltonian
  \begin{equation}\label{I_4}
    \nonumber I_4=\alpha(h_4,t_4)+R_4(h_4, t_4, \theta),
  \end{equation}
  where $\alpha(h_4,t_4)$ satisfies
  \begin{equation}\label{alphah13}
|\partial^k_{h_4}\alpha(h_4,t_4)|\ge   c\cdot
h_4^{\frac{1-d}{2}-k}, \qquad \mbox{for}\ k=0,\ 1,\
  2,
  \end{equation}
   \begin{equation}\label{alphah14}
   \nonumber |\partial^k_{h_4}\partial^l_{t_4}\alpha(h_4,t_4)|\le C\cdot
  h_4^{\frac{1}{2}-k},\  \mbox{for}\  k\le
  \Upsilon_1+1-\nu,\ l\le
  \Upsilon_2-\nu;
  \end{equation}
  and $R_4$ satisfies
  \begin{equation}\label{R_4}
   \nonumber |\partial_{h_4}^k\partial_{t_4}^l\partial_{\theta}^jR_4|\le C\cdot h_4^{-\frac{\nu}{2}-k},\ \mbox{for}\ j\le\nu,\ k+j\le \Upsilon_1-\nu,\ l\le \Upsilon_2-1-\nu.
  \end{equation}

Note that from (\ref{alphah13}),
$|\partial_{h_4}\alpha(h_4,t_4)|\ge c\cdot
h_4^{\frac{-1-d}{2}}>0.$ Thus we can solve the function
$\rho=\alpha(h_4,t_4)$. Denote   $h_4=\mathcal{N}(\rho,t_4)$ be
the inverse function. Exchanging the roles of time and angle
again  { and denoting $(I_4,h_4,t_4)$ } by $(\mathcal{I},h,\tau)$, we
obtain a superlinear Hamiltonian system with the Hamiltonian
\begin{equation}\label{barH_0}
 h(\mathcal{I}, \theta, \tau)=\mathcal{N}(\mathcal{I},
\tau)+\mathcal{P}(\mathcal{I}, \theta, \tau).
\end{equation}
It holds that

 \begin{lemma}\label{L602}
For $\mathcal{\mathcal{I}}$ large enough,  {we have that}
 \begin{eqnarray}\label{N601}
    \nonumber c\mathcal{\mathcal{I}}^2\leq \Big|  \mathcal{N}\Big|\leq
   C\mathcal{\mathcal{I}}^{\frac{2}{1-d}},\
    c\mathcal{I}\leq\Big|\partial_{\mathcal{I}}\mathcal{N}\Big|\leq C\mathcal{I}^{\frac{1+d}{1-d}},\  c\mathcal{I}^{ {-\frac{d}{1-d}}}\leq\Big|\partial^2_{\mathcal{I}}\mathcal{N}\Big|\leq C\mathcal{I}^{\frac{3d}{1-d}};
 \end{eqnarray}
   \begin{eqnarray}\label{N602}
     \nonumber  \Big|\partial^k_\mathcal{I}\partial^l_{\tau }\mathcal{N}\Big|\leq C\mathcal{I}^{\frac{2-k+(2(k+l)-1)d}{1-d}},\quad k\leq\Upsilon_1+1-\nu,\ l\leq\Upsilon_2-\nu.
 \end{eqnarray}
  Moreover, $\mathcal{P}$ satisfies
\begin{equation}\label{mathcalP}
  \nonumber |\partial_{\mathcal{\mathcal{I}}}^k\partial_{\theta}^j\partial_{\tau}^l\mathcal{P}|\le
C\cdot \mathcal{\mathcal{I}}^{\frac{-\nu+1-k+(2(k+l)+1)d}{1-d}}
\end{equation}
\mbox{for}\ $j\le\nu,\ k+j\le \Upsilon_1-\nu,\ l\le
\Upsilon_2-1-\nu.$
     \end{lemma}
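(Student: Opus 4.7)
The plan is to mirror the argument used in the proof of Lemma \ref{L14}, adapted to the weaker twist (\ref{alphah13}): since $\partial_h\alpha$ is now bounded below only by $h^{-(1+d)/2}$ instead of $h^{-1/2}$, every application of the implicit-function chain rule costs an extra factor involving a power of $d$, and these factors must be bookkept carefully.

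First I would establish the bounds on $\mathcal{N}$. The two-sided estimate $c\,h^{(1-d)/2}\le|\alpha|\le C\,h^{1/2}$ (combining (\ref{alphah13}) with (\ref{alphah14}) at $k=0$) gives $c\mathcal{I}^2\le|\mathcal{N}|\le C\mathcal{I}^{2/(1-d)}$ directly. Differentiating the identity $\alpha(\mathcal{N}(\rho,\tau),\tau)=\rho$ once yields $\partial_\rho\mathcal{N}=1/\partial_h\alpha$, and combining $c\,h^{-(1+d)/2}\le|\partial_h\alpha|\le C\,h^{-1/2}$ with the range just established for $\mathcal{N}$ produces $c\mathcal{I}\le|\partial_\mathcal{I}\mathcal{N}|\le C\mathcal{I}^{(1+d)/(1-d)}$; the second-derivative identity $\partial_\rho^2\mathcal{N}=-\partial_h^2\alpha/(\partial_h\alpha)^3$ yields the $k=2$ bounds analogously. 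For $k+l\ge 2$, Faà di Bruno applied to the identity $0=\partial_\rho^k\partial_\tau^l\alpha(\mathcal{N}(\rho,\tau),\tau)$ isolates the top term $\partial_h\alpha\cdot\partial_\rho^k\partial_\tau^l\mathcal{N}$, and solving for it inductively, using the already-controlled lower-order derivatives of $\mathcal{N}$ together with (\ref{alphah14}), propagates the claimed bound in (\ref{N602}).

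For $\mathcal{P}$, substituting $h=\mathcal{N}(\mathcal{I},\tau)+\mathcal{P}$ into $\mathcal{I}=\alpha(h,\tau)+R_4(h,\tau,\theta)$ and Taylor-expanding around $h=\mathcal{N}$ gives the implicit relation
\[
\partial_h\alpha(\mathcal{N},\tau)\,\mathcal{P}+\int_0^1\int_0^1\partial_h^2\alpha(\mathcal{N}+s\mu\mathcal{P},\tau)\,\mu\mathcal{P}^2\,ds\,d\mu+R_4(\mathcal{N}+\mathcal{P},\tau,\theta)=0.
\]
Since $|R_4|\le C\,h^{-\nu/2}$, the crude bound $|\mathcal{P}|\le C\,h^{(1+d-\nu)/2}\le C\mathcal{I}^{(1-\nu+d)/(1-d)}$ follows at once. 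Higher derivatives of $\mathcal{P}$ are then extracted by differentiating this implicit relation and inducting on $k+j+l$, in complete analogy with the proof of Lemma \ref{L14}: one isolates $\partial_h\alpha\cdot\partial_\mathcal{I}^k\partial_\theta^j\partial_\tau^l\mathcal{P}$ as the leading term, controls the remaining Faà di Bruno terms using the bounds on $\mathcal{N}$ from (\ref{N602}) and on $R_4$, and divides by $\partial_h\alpha$.

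The main obstacle I anticipate is the exponent bookkeeping. Each $\partial_\mathcal{I}$ brings in a factor going like $\partial_\rho\mathcal{N}\lesssim\mathcal{I}^{(1+d)/(1-d)}$ while each $\partial_\tau$ brings in a factor $\partial_\tau\mathcal{N}\lesssim\mathcal{I}^{(2+d)/(1-d)}$, and each chain differentiation of $\partial_h\alpha^{-1}$ further multiplies by a $d$-weighted power; one must verify that the worst term in the expansion—namely $\partial_h^{p+1}\alpha\cdot\Pi_i\partial_\mathcal{I}^{k_i}\partial_\tau^{l_i}\mathcal{N}$ or $\partial_h R_4\cdot\partial_\mathcal{I}^k\partial_\theta^j\partial_\tau^l\mathcal{P}$—telescopes to exactly the advertised exponent $(-\nu+1-k+(2(k+l)+1)d)/(1-d)$ rather than something larger. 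The delicate point is that $\theta$-differentiation touches only $R_4$ and not the chain through $\mathcal{N}$, so the accumulation of $d$-factors is linear in $k+l$ with coefficient $2$ and independent of $j$; once this pattern is recognized, the induction goes through in the same manner as the one for Lemma \ref{L14}.
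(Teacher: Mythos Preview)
Your proposal is correct and follows essentially the same approach as the paper, which simply states that the proof is similar to that of Lemma~\ref{L14} and omits the details. In fact your sketch spells out more than the paper does: the bookkeeping of the $d$-weighted exponents through the chain rule, and the observation that $\theta$-derivatives touch only $R_4$ and hence do not accumulate additional $d$-factors, are exactly what is needed to carry the argument of Lemma~\ref{L14} over to this weaker-twist setting.
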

    \begin{proof}
 { The proof is similar to the one of Lemma \ref{L14}, we omit it here.}
   \end{proof}

\vskip 0.4cm
{\it Step 5.}\quad \emph{The Poincar\'e map}
\vskip 0.4cm
The Poincar\'e map of the Hamiltonian system with  Hamiltonian
$h$ (\ref{barH_0}) is of the form

\begin{equation}\label{Poin}\left\{ \begin{array}{l}
   \nonumber\theta_{1}=\theta+r(\mathcal{I})+F_1(\mathcal{I},\theta)\\
   \\
    \displaystyle    \mathcal{I}_{1}=\mathcal{I}+F_2(\mathcal{I},\theta),
     \end{array}\right.
      \end{equation}
where $F_1$ and $F_2$ satisfy that for $ j\le\nu-1,\ k+j\le
\Upsilon_1-\nu-1,$
\begin{equation}\label{F1}
   \nonumber|\partial_{
\mathcal{I}}^k\partial_{\theta}^jF_1| \le C\cdot
\mathcal{\mathcal{I}}^{\frac{-\nu -k+(2k+3)d}{1-d}},
\end{equation}
\begin{equation}\label{F2}
  \nonumber   |\partial_{
\mathcal{I}}^k\partial_{\theta}^jF_2| \le  C\cdot
\mathcal{\mathcal{I}}^{\frac{-\nu+1 -k+(2k+1)d}{1-d}}.
\end{equation}
Moreover, the following estimates hold  true for $r(\mathcal{I})$:
\begin{equation}\label{rmathcalI0}
  \nonumber c\mathcal{I}\leq\Big|r(\mathcal{I})\Big|\leq
C\mathcal{I}^{\frac{1+d}{1-d}},
\end{equation}
\begin{equation}\label{rmathcalI1}
\  c\mathcal{I}^{
{-\frac{d}{1-d}}}\leq\Big|r'(\mathcal{I})\Big|\leq
C\mathcal{I}^{\frac{3d}{1-d}};
\end{equation}
and
\begin{equation}\label{rmathcalI}
|r^{(k)}(\mathcal{I})| \leq
C\mathcal{I}^{\frac{1-k+(2k+1)d}{1-d}},\ k\leq\Upsilon_1 -\nu.
\end{equation}
Let $\mathcal{I}(r)$ be the reverse function of $r(\mathcal{I})$. From (\ref{rmathcalI1}) and (\ref{rmathcalI}), we obtain the following estimates on $\mathcal{I}(r)$:
\begin{equation}\label{mathcal{I}(r)1}
  \nonumber c\cdot r^{-\frac{3d}{1+d}} \le \mathcal{I}'(r)\le C\cdot r^{\frac{d}{1-d}}, \end{equation}
and
\begin{equation}\label{mathcal{I}(r)1}
  \nonumber |\mathcal{I}^{(k)}(r)|\le C\cdot
r^{\frac{1-k+(7k-6)d}{1-d}},\quad   k\le \Upsilon_1
-\nu.\end{equation}

With a transformation:$(\theta, \mathcal{I})\rightarrow (\theta, r)$, we obtain the following map:
 \begin{equation}\label{newPoin}\left\{ \begin{array}{l}
 \theta_{1}=\theta+r+\tilde{F}_1(r,\theta)\\
   \\
    \displaystyle    r_{1}=r+\tilde{F}_2(r,\theta),
     \end{array}\right.
      \end{equation}
where
$$
\tilde{F}_1(r,\theta)=F_1(\mathcal{I}(r),\theta),\quad \tilde{F}_2(r,\theta)=\int_0^1r'(\mathcal{I}+\lambda F_2(\mathcal{I},\theta))F_2(\mathcal{I},\theta)d\lambda.
$$

By a direct computation, we have that for $ j\le\nu-1,\ k+j\le
\Upsilon_1-\nu-1,$
\begin{equation}\label{tildeF1}
  \nonumber\begin{array}{ll}|\partial_r^k\partial_{\theta}^j\tilde{F}_1|&\le \sum^k_{i=1}\sum_{k_1+\cdots +k_i=k}|\partial_{\mathcal{I}}^i\partial_{\theta}^jF_1|\cdot |\mathcal{I}^{(k_1)}(r)\cdots
\mathcal{I}^{(k_i)}(r)|\\
\\
&\le  \sum^k_{i=1}C_i\cdot \mathcal{\mathcal{I}}^{\frac{-\nu
-k+(7k-4i+3)d}{1-d}}\le   C \cdot
\mathcal{\mathcal{I}}^{\frac{-\nu -k+(7k-1)d}{1-d}},
\end{array}
\end{equation}
and
\begin{equation}\label{tildeF2}
  \nonumber\begin{array}{ll}
|\partial_r^k\partial_{\theta}^j\tilde{F}_2|\le C\cdot
\mathcal{\mathcal{I}}^{\frac{-\nu+1 -k+7kd}{1-d}}.
\end{array}
\end{equation}

Finally, assume
\begin{equation}\label{para01}
  \nonumber\nu>4,
\end{equation}
\begin{equation}\label{para02}
  \nonumber\Upsilon_1-\nu>4,
\end{equation}
\begin{equation}\label{para02-1}
  \nonumber\Upsilon_2-\nu\ge 1,
\end{equation}
\begin{equation}\label{para03}
  \nonumber-\nu -k+(7k-1)d<0,\ k=0,1,2,3,4,
\end{equation}
and
\begin{equation}\label{para04}
  \nonumber-\nu+1 -k+7kd<0,\ k=0,1,2,3,4.
\end{equation}
Therefore, let {$\nu>\max\{4,7d, 28d-3\}$} and
$\Upsilon_1=5+\nu,\ \Upsilon_2=1+\nu$,   then the map
(\ref{newPoin}) is a standard twist map. Following Moser's
Theorem, we obtain the {bounded} results of Theorem \ref{Th2}.

\subsection{ {Unbounded results for $d>1$}}\label{s6.2}
 In the following, we will prove that   { $\Upsilon_1=\Upsilon_2=4$} are sufficient for the instability results if $d>1$.
\vskip 0.4cm
 {\it Step 1.}\quad\emph{Action-angle variables}
\vskip 0.4cm
 Let $x=\sqrt{\frac{2}{n}}I^{\frac 12}\cos{n\theta}$, $y=\sqrt{\frac{2}{n}}I^{\frac 12}\sin{n\theta}$, then
\begin{equation}\label{}
  \nonumber H_1(I,\theta, t)=I+f_1(I,\theta)+f_2(I,\theta,t).
\end{equation}
  where ${f}_1=\frac 1n
G(\sqrt{\frac{2}{n}}I^{\frac 12}\cos{n\theta})$ and $f_2=-\frac 1n
\sqrt{\frac{2}{n}}I^{\frac 12}\cos{n\theta} p(t).$

  From the condition (\ref{G+-}) on $ g$, we have that $$ G(x)=\int_0^x  g(x)dx :=x\cdot  g(x)+ {f_3(x)}$$ with
  \begin{equation}\label{f_3}
  f_3(x)=O_{ {4}}(|x|^{1-d}),
  \end{equation}
  where $d>1$ and $|x|\gg 1$. Here we say a function $f(x)$ is of $O_{ {m}}(|x|^{c_0})$ for $c_0\in \mathbb{R}$
  if {$|f^{(k)}(x)| \le C |x|^{c_0-k}$ for $x$ satisfying $|x|\gg 1$ and $0\le k\leq m.$} Similarly, for a function $f: \mathbb{R}^+\times \mathbb{S}^2\rightarrow \mathbb{R}$,
  we say $f(I, \theta, t)$ is of $O_{ {m}}(I^{c_0})$ for $c_0\in \mathbb{R}$
  if $ {|\partial^k_I\partial^j_{\theta}\partial^l_t f|}\le C I^{c_0-k}$ for  {$j+k+l\leq m$} and $I\gg 1$.

  Therefore, we get
  $$
  H=I+\widetilde{{f}}_1(I,\theta)+f_2(I,\theta,t)+f_3(\sqrt{\frac{2}{n}}I^{\frac 12}\cos{n\theta}):=I+R(I, \theta, t),
 $$
  where $\widetilde{{f}}_1(I,\theta)=\frac{1}{n}\sqrt{\frac{2}{n}}I^{\frac 12}\cos{n\theta} \cdot g(\sqrt{\frac{2}{n}}I^{\frac 12}\cos{n\theta})$. It holds that $$ {|\partial^k_I\partial^j_{\theta}\partial^l_t \widetilde{f}_1|}\le C I^{\frac{1}{2}-k},\quad {|\partial^k_I\partial^j_{\theta}\partial^l_t f_2|}\le C I^{\frac{1}{2}-k}$$ for $j+k+l\le 4$.

\vskip 0.4cm
 {\it Step 2.}\quad \emph{Some canonical transformations}
\vskip 0.4cm
 Exchanging the roles of angle and time variables, we obtain a new Hamiltonian as follows:

 $$
 \begin{array}{ll}
 {I}&=h-\widetilde{{f}}_1(h-R,\theta)-f_2(h-R, \theta,t)-f_3(\sqrt{\frac{2}{n}}(h-R)^{\frac12}\cos n\theta)\\
 &=h-\widetilde{{f}}_1(h,\theta)-f_2(h,\theta,t)-f_3(\sqrt{\frac{2}{n}}h^{\frac12}\cos n\theta)\\
 &\quad +({\partial_I} \widetilde{{f}}_1(h,\theta)+{\partial_I}f_2(h, \theta,t)+{\partial_I} f_3(\sqrt{\frac{2}{n}}h^{\frac12}\cos n\theta))\cdot R(h, \theta, t)+O_{ {4}}(h^{-\frac 12}).
  \end{array}
$$
 After careful calculations and from the definition of $\widetilde{{f}}_1,\ f_2,\ f_3$, we have that
the term $$({\partial_I} \widetilde{{f}}_1(h,\theta)+{\partial_I}f_2(h, \theta,t)+{\partial_I} f_3(\sqrt{\frac{2}{n}}h^{\frac12}\cos n\theta))\cdot R(h, \theta, t):=R_1(h, \theta, t)+O_{ {4}}(h^{-\frac 12}),$$ { where $R_1(h, \theta, t)$  is of the form $f_4(
g(\sqrt{\frac{2}{n}}h^{\frac 12}\cos n\theta))\cdot f_5(t, \theta)$, with $f_4$
and $f_5$ two smooth functions. In fact,
 $$
 \begin{array}{ll}
 &\Big({\partial_I} \widetilde{{f}}_1(h,\theta)+{\partial_I}f_2(h, \theta,t)+{\partial_I} f_3(\sqrt{\frac{2}{n}}h^{\frac12}\cos n\theta)\Big)\cdot R(h, \theta, t)\\
 =&\Big({\partial_I} \widetilde{{f}}_1(h,\theta)+{\partial_I}f_2(h, \theta,t)+{\partial_I} f_3(\sqrt{\frac{2}{n}}h^{\frac12}\cos n\theta)\Big)\cdot\Big(\widetilde{{f}}_1(h,\theta)+f_2(h,\theta,t)+f_3(\sqrt{\frac{2}{n}}h^{\frac 12}\cos{n\theta})\Big)\\
 =&{\partial_I} \widetilde{{f}}_1\cdot\widetilde{{f}}_1 +{\partial_I}f_2\cdot\widetilde{{f}}_1 +{\partial_I} f_3\cdot\widetilde{{f}}_1
 +{\partial_I} \widetilde{{f}}_1\cdot f_2 +{\partial_I}f_2\cdot f_2 +{\partial_I} f_3\cdot f_2 \\
 &+{\partial_I} \widetilde{{f}}_1\cdot f_3 +{\partial_I}f_2\cdot f_3 +{\partial_I} f_3\cdot f_3, \\
  \end{array}
$$
where
 $$
 \begin{array}{ll}
 &{\partial_I} \widetilde{{f}}_1\cdot\widetilde{{f}}_1= n^{-3}
g^2(\sqrt{\frac{2}{n}}h^{\frac 12}\cos{n\theta})\cdot\cos^2{n\theta}+O_{ {4}}(h^{-\frac 12}),\\
 &{\partial_I}f_2\cdot\widetilde{{f}}_1=n^{-3}
g(\sqrt{\frac{2}{n}}h^{\frac 12}\cos{n\theta})\cdot\cos^2{n\theta}\cdot p(t),\\
 &{\partial_I} \widetilde{{f}}_1\cdot f_2= n^{-3}
g(\sqrt{\frac{2}{n}}h^{\frac 12}\cos{n\theta})\cdot\cos^2{n\theta}\cdot p(t)+O_{ {4}}(h^{-\frac 12}),\\
 &{\partial_I}f_2\cdot f_2=n^{-3}
 \cos^2{n\theta}\cdot p^2(t)+O_{ {4}}(h^{-\frac 12}),\\
 & {\partial_I} f_3\cdot\widetilde{{f}}_1+{\partial_I} f_3\cdot f_2=O_{ {4}}(h^{-1}), \\
 &{\partial_I} \widetilde{{f}}_1\cdot f_3+{\partial_I}f_2\cdot f_3+{\partial_I} f_3\cdot f_3=O_{ {4}}(h^{-\frac 12}),\\
   \end{array}
$$
Denote
\begin{equation}\label{1114}
R_1=n^{-3}\cos^2{n\theta}\Big\{
g^2(\sqrt{\frac{2}{n}}h^{\frac 12}\cos{n\theta})
+2g(\sqrt{\frac{2}{n}}h^{\frac 12}\cos{n\theta}) \cdot p(t)+
    p^2(t)\Big\}.
    \end{equation}
Therefore, we get the conclusion.
Meanwhile, we have }$$ {|\partial^k_h\partial^j_{\theta}\partial^l_t R_1|\le C h^{-k},\quad |\partial^k_h\partial^j_{\theta} f_3|\le C h^{-k}}$$ for $j+k+l\le 4$.
\vskip0.5cm

Next, with the rotation transformation $$
 h=h_1,\qquad
      t=t_1+\theta,
 $$
 the Hamiltonian $I$ above is transformed into
$$
\begin{array}{ll}
 {I_1}&=-\widetilde{{f}}_1(h_1,\theta)-f_2(h_1,\theta,t_1+\theta)-f_3(\sqrt{\frac{2}{n}}h_1^{\frac 12}\cos n\theta))+ R_1(h_1, \theta, t_1+\theta)+O_{ {4}}(h_1^{-\frac 12})\\
 &:=-\widetilde{{f}}_1(h_1,\theta)-f_2(h_1,\theta,t_1+\theta)-f_3(\sqrt{\frac{2}{n}}h_1^{\frac 12}\cos n\theta))+R_2(h_1,\theta, t_1)+O_{ {4}}(h_1^{-\frac 12}),
 \end{array}
$$
where $R_2(h_1,\theta, t_1)=R_1(h_1,\theta, t_1+\theta)$ and   for $ i+j+k\le 4$,
$$ {|\partial^k_{h_1}\partial^j_{\theta}\partial^l_t R_2| \le C h_1^{-k}}.$$
\vskip 0.4cm
{\it Step 3.}\quad \emph{ Normal form}
\vskip 0.4cm
 We first eliminate the terms of $\widetilde{{f}}_1$ and $f_2$ with a generating function $S_1$:
 $$\left\{
 \begin{array}{ll}
 h_1&={ {h_2}}+{\partial_{t_1} S_1},\\
 { {t_2}}&=t_1+{\partial_{ {h_2}}} S_1.\end{array}
 \right.$$
 Let   $S_1 ({ {h_2}}, t_1, \theta)=\int_0^\theta\{
 -\widetilde{{f}}_1({ {h_2}},\theta)-f_2({ {h_2}}, t_1, \theta)+[\widetilde{{f}}_1]({ {h_2}})+[f_2]({ {h_2}}, t_1)\}d\theta$.

 Thus we obtain a Hamiltonian as follows:
 $$
 I_2=-[\widetilde{{f}}_1]({ {h_2}})-[f_2]({ {h_2}}, { {t_2}})- {f_3(\sqrt{\frac{2}{n}}{ {h_2}}^{\frac 12}\cos n\theta)}+R_2({ {h_2}}, { {t_2}}, \theta)+R_3({ {h_2}}, { {t_2}}, \theta)+O_{ {3}}({ {h_2}}^{-\frac 12}),
 $$
where
$$
R_2=-(\frac{\partial \widetilde{{f}}_1}{\partial
{ {h_2}}}+\frac{\partial f_2}{\partial { {h_2}}}+\frac{\partial f_3}{\partial { {h_2}}})\cdot
\frac{\partial S_1}{\partial t_1}-\frac{\partial f_2}{\partial
t_1}\cdot \frac{\partial S_1}{\partial { {h_2}}}+ O_{ {3}}({ {h_2}}^{-\frac 12}).$$

From the definition of $\widetilde{{f}}_1,\ f_2,\ f_3$ and
$S_1$, it follows that $R_2+R_3$ is of the form $f_6(
g(\sqrt{\frac{2}{n}}{ {h_2}}^{\frac 12}\cos n\theta))\cdot f_7({ {t_2}}, \theta)+O_{ {3}}({ {h_2}}^{-\frac 12})$ with $f_6$
and $f_7$ two smooth functions {like $R_1$ (\ref{1114})}. Thus the Hamiltonian can be
rewritten as follows
$$
 I_2=-[\widetilde{{f}}_1]({ {h_2}})-[f_2]({ {h_2}}, { {t_2}})- {f_3(\sqrt{\frac{2}{n}}{ {h_2}}^{\frac 12}\cos n\theta)}+f_6( g(\sqrt{\frac{2}{n}}{ {h_2}}^{\frac 12}\cos n\theta))\cdot f_7({ {t_2}}, \theta)+O_{ {3}}({ {h_2}}^{-\frac 12}).
 $$

Next, to eliminate the term $ {f_6( g(\sqrt{\frac{2}{n}}{ {h_2}}^{\frac 12}\cos n\theta))}\cdot f_7({ {t_2}}, \theta)$, we make the following  canonical transformation:
$$\left\{
 \begin{array}{ll}
 { {h_2}}&={ {\rho}}+\partial_{ {t_2}} S_2,\\
 { {\tau}}&={ {t_2}}+{\partial_{ {\rho}}} S_2.\end{array}
 \right.$$
where the generating function $S_2$ satisfying  $$  S_2 ({ {\rho}}, { {t_2}}, \theta)=\int_0^\theta\{
 -f_6( g(\sqrt{\frac{2}{n}}{ {\rho}}^{\frac 12}\cos n\theta))\cdot f_7({ {t_2}}, \theta)+[f_{67}]({ {\rho}}, { {t_2}})\}d\theta$$ with
 $$[f_{67}]({ {\rho}}, { {t_2}})=\frac{1}{2\pi}\int_0^{2\pi} f_6( g(\sqrt{\frac{2}{n}}{ {\rho}}^{\frac 12}\cos n\theta))\cdot f_7({ {t_2}}, \theta)d\theta.$$

 Hence the obtained Hamiltonian is of the form
 $$
 I_3=-[\widetilde{{f}}_1]({ {\rho}})-[f_2]({ {\rho}}, { {\tau}})-f_3(\sqrt{\frac{2}{n}}{ {\rho}}^{\frac 12}\cos n\theta)+[f_{67}]({ {\rho}}, { {\tau}})+O_{ {2}}({ {\rho}}^{-\frac 12}).
 $$

For the definition of $ g$, we have that

$$
 \begin{array}{ll}
[f_{67}]({ {\rho}}, { {\tau}})&=\frac{1}{2\pi}\left(\int_{T^+} f_6( g(+\infty))\cdot f_7({ {\tau}}, \theta)d\theta+\int_{T^-} f_6( g(-\infty))\cdot f_7({ {\tau}}, \theta)d\theta\right)+O_{ {2}}({ {\rho}}^{-\frac {d}{10}})
\\
  &:=f_8({ {\tau}})+O_{ {2}}({ {\rho}}^{-\frac {d}{10}}).
  \end{array}
$$
where $T^+=\{\theta\in [0, 2\pi]|\cos\theta \ge 0\}$, $T^-=\{\theta\in [0, 2\pi]|\cos\theta < 0\}$ and
 {   $f_8({{\tau}})=O_2(1)$}.
\vskip 0.4cm
Again from the definitions of $ g$, $\widetilde{{f}}_1$ and $f_2$,
we obtain
$$[\widetilde{{f}}_1]({ {\rho}})=\frac{\sqrt{2}}{2\pi}n^{-\frac{3}{2}}A\sqrt{{ {\rho}}
}+O_{ {2}}({ {\rho}}^{-\frac {d}{10}})$$ and
\begin{eqnarray}\label{}
   \nonumber[f_2]({ {\rho}} ,{ {\tau}} )&=&-\frac{\sqrt{2}}{2\pi}n^{-\frac{3}{2}}{ {\rho}} ^{\frac{1}{2}}\{\cos
(n{ {\tau}} )\int_0^{2\pi}p({ {s}})\cos (n{ {s}})d{ {s}}+\sin
(n{ {\tau}} )\int_0^{2\pi}p({ {s}})\sin( n{ {s}})d{ {s}}\}\\
   \nonumber &=&-\frac{\sqrt{2}}{2\pi}n^{-\frac{3}{2}}h ^{\frac{1}{2}}A\cos(n{ {\tau}} +\xi)
 \end{eqnarray}
 with
   $\tan \xi=\frac{\int_0^{2\pi}p(s)\sin( ns)ds}{\int_0^{2\pi}p(s)\cos (ns)ds}.$
\vskip 0.4cm
In conclusion, the new Hamiltonian is of the form
$$
I_3=-\frac{\sqrt{2}}{2\pi}n^{-\frac{3}{2}}A(1-\cos(n{ {\tau}}
+\xi))\sqrt{{ {\rho}} }-f_3(\sqrt{\frac{2}{n}}{ {\rho}}^{\frac 12}\cos n\theta)+
f_8({ {\tau}})+O_{ {2}}({ {\rho}}^{-\frac {d}{10}})+ {O_{ {2}}(\rho^{-\frac 12})}.
$$

Similarly, we can construct a canonical transformation to eliminate the term $f_3(\sqrt{\frac{2}{n}}{ {\rho}}^{\frac 12}\cos n\theta)$ and  obtain the following Hamiltonian
\begin{equation}\label{I10}
  \nonumber I_4=-\frac{\sqrt{2}}{2\pi}n^{-\frac{3}{2}}A(1-\cos(n{ {\tau}}
+\eta))\sqrt{{ {\rho}} }-[f_3]({ {\rho}})+f_8({ {\tau}})+O_{ {1}}({ {\rho}}^{-\frac
{d}{10}})+ {O_{ {1}}(\rho^{-\frac 12})},
 \end{equation}
where $[f_3]({ {\rho}})=\int_0^{2\pi}f_3(\sqrt{\frac{2}{n}}{ {\rho}}^{\frac 12}\cos n\theta)d\theta$. With the help of (\ref{f_3}), we have
\begin{equation}\label{[f_3]}
|[f_3]^{(k)}({ {\rho}})|\le C\cdot { {\rho^{ {-k-\frac{1 }{2}}}}},\quad k=0, 1.
\end{equation}
{ {In fact, for $\rho$ large enough,
\begin{eqnarray*}\label{}
 \displaystyle\Big|[f_3]( {\rho})\Big| &\leq&4\int_0^{\rho^{-\frac{1}{2}}}\Big|f_3(\sqrt{\frac{2}{n}}{{\rho}}^{\frac 12}\sin n\theta)\Big |d\theta +4\int^{\frac{\pi}{2}}_{\rho^{-\frac{1}{2}}}\Big|f_3(\sqrt{\frac{2}{n}}{{\rho}}^{\frac 12}\sin n\theta)\Big|d\theta \\
    \displaystyle &\le& C_1\cdot { {\rho}}^{{-\frac{1}{2}}} +4\int^{\frac{\pi}{2}}_{\rho^{-\frac{1}{2}}} |x |^{d-1}\Big|f_3(x)\Big|\rho^{\frac{1-d}{2}}\sin^{1-d}\theta d\theta  \\
     \displaystyle &\le& C_1\cdot { {\rho}}^{{-\frac{1}{2}}} +C_2 \rho^{\frac{1-d}{2}}\int^{\frac{\pi}{2}}_{\rho^{-\frac{1}{2}}}\sin^{1-d}\theta d\theta \\
      \displaystyle &\le& C_1\cdot { {\rho}}^{{-\frac{1}{2}}} +\frac{2}{\pi}C_2 \rho^{\frac{1-d}{2}}\int^{\frac{\pi}{2}}_{\rho^{-\frac{1}{2}}}\theta^{1-d} d\theta \le  C \cdot { \max\{{\rho^{-\frac{1}{2}},\ {\rho}^{{\frac{1-d}{2}}}}\}},  \\
               \end{eqnarray*}
               where $x=\sqrt{\frac{2}{n}}{{\rho}}^{\frac 12}\sin n\theta$.
               Moreover, the estimate of $[f_3]'(\rho)$ is similar. Therefore, (\ref{[f_3]}) holds. }

\vskip0.5cm

 From (\ref{[f_3]}), we have that
 \begin{equation}\label{I10-1}
h({ {\rho}} ,{ {\tau}}, \theta
)=-\frac{\sqrt{2}}{2\pi}n^{-\frac{3}{2}}A(1-\cos(n{ {\tau}}
+\eta))\sqrt{{ {\rho}} }+f_8({ {\tau}}) + {O_1(\rho^{-\delta})},
 \end{equation}
 { where $-\delta=max\{ {-\frac 12},\ {\frac
{1-d}{2}},\ {\frac
{-d}{10}}\}<0$ for $d>1$}.

 {Finally, we prove that}
  \begin{lemma} The Hamiltonian system with Hamiltonian (\ref{I10-1}) has unbounded solutions.
  \end{lemma}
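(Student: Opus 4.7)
The plan is to exhibit an orbit of the Hamiltonian system (\ref{I10-1}) along which $\rho(\theta)\to\infty$ as $\theta\to+\infty$. Since the new time variable $\theta$ differs from the original $t$ by a bounded reparametrization, and $\rho$ is comparable to the action variable $I$ of the action-angle coordinates, such an orbit will yield a solution of (\ref{semi}) with $x(t)^2+\dot x(t)^2\to\infty$. Write $h=H_0+R$, where
$$H_0(\rho,\tau):=-c\bigl(1-\cos(n\tau+\eta)\bigr)\sqrt{\rho}+f_8(\tau),\qquad c:=\frac{\sqrt{2}\,A}{2\pi\,n^{3/2}}>0,$$
and $R=O_1(\rho^{-\delta})$ with $\delta>0$ strictly (this is where the assumption $d>1$ is used). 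Let $\tau^*$ be any solution of $n\tau^*+\eta\equiv 0\pmod{2\pi}$, so that $1-\cos(n\tau^*+\eta)=0$.

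The first step is the phase-portrait of the autonomous Hamiltonian $H_0$. For each $E<f_8(\tau^*)$, the level curve $\{H_0=E\}$ is given near $\tau^*$ by
$$\sqrt{\rho(\tau)}=\frac{f_8(\tau)-E}{c\bigl(1-\cos(n\tau+\eta)\bigr)},$$
which blows up as $\tau\to\tau^{*\pm}$. Hamilton's equations (with the sign convention fixed before (\ref{I00-1})) push $\tau$ monotonically toward $\tau^*$ on one of the two branches; inserting the Taylor expansion $1-\cos(n\tau+\eta)\sim n^2(\tau-\tau^*)^2/2$ together with $\sqrt{\rho}\sim(\tau-\tau^*)^{-2}$ into the equation for $d\tau/d\theta$ and integrating yields the asymptotics $\rho(\theta)\sim C\theta^{4/3}$ and $|\tau(\theta)-\tau^*|\sim C\theta^{-1/3}$ as $\theta\to+\infty$. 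So the unperturbed system already has an escape orbit that reaches $\rho=\infty$ only at $\theta=+\infty$.

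The second step is to carry the escape over to the full system. Along flow lines of $h$, the key estimate is
$$\frac{dH_0}{d\theta}=\{H_0,R\}=\partial_\rho H_0\cdot\partial_\tau R-\partial_\tau H_0\cdot\partial_\rho R=O(\rho^{-\delta-1/2}),$$
so the ``energy'' $H_0$ drifts only slowly. Choose initial data $(\rho_0,\tau_0)$ with $\rho_0$ large and $H_0(\rho_0,\tau_0)=E<f_8(\tau^*)$ on the escape branch. Inside the cone-like region $\Omega:=\{\rho\ge\rho_0,\ 0<|\tau-\tau^*|\le K\rho^{-1/4}\}$, the leading-order vector field of $H_0$ dominates the $O(\rho^{-\delta-1})$ and $O(\rho^{-\delta})$ corrections coming from $R$, so one can show $\Omega$ is almost invariant, $\tau$ cannot cross $\tau^*$, and $\rho(\theta)\to\infty$. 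Composing with the canonical transformations of Subsection \ref{s6.2} then produces the required unbounded solution of (\ref{semi}).

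The main obstacle is the long-time control of the drift of $H_0$ when $\delta$ is small (for $d$ just above $1$ one only has $\delta=(d-1)/2$, which can be arbitrarily small). The naive bound $\int^{\infty}|\{H_0,R\}|\,d\theta\lesssim\int^{\infty}\theta^{-(2\delta+1)\cdot 2/3}\,d\theta$ converges only for $\delta>1/2$, and for smaller $\delta$ the drift of $H_0$ could, a priori, carry the trajectory off the escape branch. To handle this I would either (i) perform one or two further normal-form reductions in the spirit of Section \ref{s4} to replace $R$ by a remainder of the form $\theta$-mean-zero $+$ smaller terms, then use integration by parts in $\theta$ to upgrade the drift estimate to an integrable one, or (ii) use an adiabatic/averaging argument that exploits the $\theta$-periodicity of $R$ together with the monotone growth of $\rho$ in the cone.
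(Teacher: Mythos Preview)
Your energy-drift approach is on the right track, but the obstacle you flag is not genuine: inside the cone $\Omega=\{|\zeta|\le K\rho^{-1/4}\}$ (with $\zeta=\tau-\tau^*$) you should not use the global bound $\partial_\tau H_0=O(\rho^{1/2})$. There $\sin(n\tau+\eta)=O(\zeta)$, so $\partial_\tau H_0=O(\rho^{1/2}\zeta)+O(1)=O(\rho^{1/4})$, and hence $\{H_0,R\}=O(\rho^{-3/4-\delta})$ rather than $O(\rho^{-1/2-\delta})$. With $\rho\sim\theta^{4/3}$ this yields $\int^\infty\theta^{-1-4\delta/3}\,d\theta<\infty$ for every $\delta>0$; no further normal form or averaging is needed. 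What you have \emph{not} done, however, is show that the orbit actually stays in such a cone; the sentence ``$\Omega$ is almost invariant'' is the heart of the matter and needs a quantitative argument.

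The paper's proof sidesteps the energy bookkeeping entirely and handles the trapping and the escape in one stroke. It eliminates $\theta$ and writes the phase curve as a Bernoulli-type differential inequality for $\zeta$ as a function of $\rho$: in the wedge $D=\{\rho^{-5/12}\le\zeta\le\rho^{-1/20}\}$ one has
\[
-\tfrac{1+\delta}{4}\,\zeta\rho^{-1}-C\zeta^{-1}\rho^{-3/2-\delta}\ \le\ \frac{d\zeta}{d\rho}\ \le\ -\tfrac15\,\zeta\rho^{-1}+C\zeta^{-1}\rho^{-3/2-\delta}.
\]
Multiplying by $2\zeta$ makes this linear in $\zeta^2$ and integration gives $c\rho^{-(1+\delta)/2}\le\zeta^2\le C\rho^{-2/5}+C\rho^{-1/2-\delta}$, so an orbit launched at $(\zeta,\rho)=(\rho_0^{-1/4},\rho_0)$ with $\rho_0$ large never leaves $D$. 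Finally, inside $D$ one reads off directly $d\rho/d\theta\ge c\,\zeta\rho^{1/2}-C\ge c\,\rho^{1/12}$, whence $\rho(\theta)\to\infty$. This phase-plane argument is shorter than yours and simultaneously produces the invariant region and the escape rate; your approach can be completed once the cone estimate is repaired as above, but the trapping then still has to be proved separately.
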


\begin{proof}
The system with Hamiltonian (\ref{I10-1}) is given by
 \begin{equation}\label{I10-2}\left\{ \begin{array}{l}
   \displaystyle  \frac{d { {\tau}}}{d\theta}=\partial_{ { {\rho}}}h({ {\rho}} ,{ {\tau}}, \theta ),\\
   \\
    \displaystyle  \frac{d { {\rho}}}{d\theta}=-\partial_{{ {\tau}}} h({ {\rho}} ,{ {\tau}}, \theta ).
     \end{array}\right.
      \end{equation}
      And the phase flow is determined by
      \begin{equation}\label{I11}
  \nonumber \frac{d { {\tau}}}{d{ {\rho}}}=\frac{-\frac{\sqrt{2}}{4\pi}n^{-\frac{3}{2}}A(1-\cos(n{ {\tau}} +\eta)){ {\rho}}^{-\frac{1}{2}}+O_{ {0}}({ {\rho}}^{-\delta-1})}{\frac{\sqrt{2}}{2\pi}n^{-\frac{1}{2}}A\sin(n{ {\tau}} +\eta){ {\rho}}^{\frac{1}{2}}-f_8'({ {\tau}})+O_{ {0}}({ {\rho}}^{-\delta})}.
 \end{equation}

 Assume ${ {\tau}}^*$ satisfying
$1-\cos(n{ {\tau}}^* +\xi)=0,$
then, as ${ {\tau}}\rightarrow { {\tau}}^*,$
$$1-\cos(n{ {\tau}} +\xi)=\frac{1}{2}n^2({ {\tau}}-{ {\tau}}^*)^2+O(|{ {\tau}}-{ {\tau}}^*|^4),$$
 $$\sin(n{ {\tau}} +\xi)=n({ {\tau}}-{ {\tau}}^*)+O(|{ {\tau}}-{ {\tau}}^*|^3).$$
Denote $\zeta={ {\tau}}-{ {\tau}}^*$,  note that $-\delta<0$, we have
% \begin{equation}\label{I12}
 %\frac{d \tau}{dh}\leq\frac{-\frac{\sqrt{2}}{4\pi}n^{-\frac{3}{2}}A(\frac{1}{2}n^2\tau^2-C_1|\tau|^4)h^{-\frac{1}{2}}+C_2h^{-\frac{1}{2}-\delta_1}+ % C_3{h}^{min\{-{\delta_2}-\frac{1}{2},-\frac{3}{4}\}}}{\frac{\sqrt{2}}{2\pi}n^{-\frac{1}{2}}A(n\tau+C_4|\tau|^3)h^{\frac{1}{2}}+C_5{h}^{-{\delta_2}}}
% \end{equation}

  \begin{equation}\label{I13}
 (-\frac{1}{4}-\frac {\delta}{4})\zeta { {\rho}}^{-1}- C\zeta^{-1}{ {\rho}}^{-\frac 32-\delta}\leq\frac{d \zeta}{d{ {\rho}}}\leq-{\frac{1}{5}}\zeta { {\rho}}^{-1}+ C \zeta^{-1}{ {\rho}}^{-\frac 32-\delta}
 \end{equation}
 in the domain $$D=\big\{(\zeta,{ {\rho}})\big|{ {\rho}}^{-\frac{5}{12}}\leq\zeta\leq { {\rho}}^{-\frac{1}{20}}\big\}.$$
 Solving the Bernoulli equation (\ref{I13}), we have
  \begin{equation}\label{I14}
  \nonumber c_1{ {\rho}}^{-\frac{1}{2}-\frac {\delta}{2}}+c_2{ {\rho}}^{-\frac 12-\delta}\leq\zeta^2\leq C_1{ {\rho}}^{-\frac{2}{5}}+C_2{ {\rho}}^{-\frac 12-\delta}. \end{equation}
It implies that the phase curve starting from the initial point $(\zeta(0),{ {\rho}}(0))=({ {\rho}}^{-\frac{1}{4}}(0),{ {\rho}}(0))$ with ${ {\rho}}(0)$ large enough stays in the domain $D$.

 Finally, from (\ref{I10-1}) and (\ref{I10-2}), the derivative
$$ \frac{d { {\rho}}}{d\theta}\geq c { {\rho}}^{\frac{1}{12}}\ \  \mbox{in\ domain}\ D,$$
which yields that the curve we obtained above is unbounded, i.e., ${ {\rho}}(\theta)$ goes to infinity as $\theta\rightarrow+\infty.$
\end{proof}

 {Go back to the original equation (\ref{semi}), we have obtained the unbounded solutions of equation (\ref{semi}) for $d>1$.}

\vskip 2cm

\section{Appendix}
\subsection{Pan and Yu's method}\label{s7.1}

%\begin{lemma}\label{A02}{(Leibniz' rule)}

%Consider function $F(u,v,w)$ where
%$u=u(x,y,z),v=v(x,y,z),w=w(x,y,z)$ sufficiently smooth, we have

% $\partial_{{x}}^j\partial_{{y}}^k\partial_{{z}}^l
%F $ is the summation of terms$$
%(\partial_{u}^{p}\partial_{v}^{q}\partial_{w}^{r} F)
%(\Pi_{i=1}^p\partial_{x}^{j_i}\partial_{y}^{k_i}\partial_{z}^{l_i}u)(\Pi_{i=p+1}^{
%p+q}\partial_{x}^{j_i}\partial_{y}^{k_i}\partial_{z}^{l_i}
%v)(\Pi_{i=p+q+1}^{
%p+q+r}\partial_{x}^{j_i}\partial_{y}^{k_i}\partial_{z}^{l_i} w),$$
%with $1\leq p+q+r\leq j+k+l,\ \sum_{i=1}^{p+q+r}j_i =j,\
%\sum_{i=1}^{p+q+r}k_i =k,\ \sum_{i=1}^{p+q+r}l_i =l,\ and \
%{j_i}+{k_i}+{l_i}\geq1,\ i=1,\ldots,(p+q+r).$

%Especially, if $w=z$, then
% $\partial_{{x}}^j\partial_{{y}}^k\partial_{{z}}^l
%F $ is the summation of terms$$
%(\partial_{u}^{p}\partial_{v}^{q}\partial_{z}^{r} F)
%(\Pi_{i=1}^p\partial_{x}^{j_i}\partial_{y}^{k_i}\partial_{z}^{l_i}u)(\Pi_{i=p+1}^{
%p+q}\partial_{x}^{j_i}\partial_{y}^{k_i}\partial_{z}^{l_i} v),$$
%with $1\leq p+q+r\leq j+k+l,\ \sum_{i=1}^{p+q}j_i =j,\
%\sum_{i=1}^{p+q}k_i =k,\ r+\sum_{i=1}^{p+q}l_i =l,\ and \
%{j_i}+{k_i}+{l_i}\geq1,\ i=1,\ldots,(p+q).$
%\end{lemma}
%The Proof is based on direct computation and induction. We omit
%it. 正文里面用
\begin{lemma}\label{A03}
{\rm{(\cite{Pan} pp. 226-230, Theorem 10)}} Suppose $0<\lambda<1,\ 0<\mu<1$,
$\varphi(x)\in C^{\nu}[a,b]$, $f(x)\in C^{\nu}[a,b]$ satisfying
\begin{eqnarray}\label{}
  \nonumber f'(x)=(x-a)^{\rho-1}(b-x)^{\sigma-1}f_1(x)
 \end{eqnarray}
where $\rho\geq1,\ \sigma\geq1$, $f_1(x)>0,\ x\in[a,b]$, then
\begin{eqnarray}\label{}
\nonumber  I&=&\int_a^b\varphi(x)e^{inf(x)}(x-a)^{\lambda-1}(b-x)^{\mu-1}dx\\
   \nonumber &=&B(n)-A(n),
       \end{eqnarray}
       where
      $$ A(n)= A_\nu(n)+R_\nu(n),\ B(n)= B_\nu(n)+Q_\nu(n),$$
      and
      $$A_\nu(n)=-\sum_{k=0}^{\nu-1}\frac{h^{(k)}(0)}{k!\rho}\Gamma(\frac{k+\lambda}{\rho})e^{\frac{i(k+\lambda)\pi}{2\rho}}n^{-\frac{k+\lambda}{\rho}}e^{inf(a)},$$
    %$$\Big|R_\nu(n)\Big|\leq\frac{1}{(\nu-1)!}\Gamma(\frac{\nu}{\rho})n^{-\frac{\nu}{\rho}}\int_0^{u_1}u^{\lambda-1}\Big|\frac{d^\nu}{du^\nu}(v_1(u)h(u))\Big|du,$$
  $$\Big|R_\nu(n)\Big|\leq Cn^{-\frac{\nu}{\rho}},$$
    $$B_\nu(n)=-\sum_{k=0}^{\nu-1}\frac{l^{(k)}(0)}{k!\sigma}\Gamma(\frac{k+\mu}{\sigma})e^{-\frac{i(k+\mu)\pi}{2\sigma}}n^{-\frac{k+\mu}{\sigma}}e^{inf(b)},$$
    $$\Big|Q_\nu(n)\Big|\leq Cn^{-\frac{\nu}{\sigma}}.$$
   \end{lemma}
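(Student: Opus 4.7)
The plan is to treat this as a classical endpoint-contribution problem for an oscillatory integral with algebraic singularities at the endpoints, where the phase is monotone on $(a,b)$ (since $f'>0$ there by the factorization hypothesis) and degenerates only at $x=a$ and $x=b$, with orders $\rho$ and $\sigma$ respectively. Since there is no interior stationary point, a standard smooth partition-of-unity argument decomposes the asymptotic expansion of $I$ into contributions from small neighborhoods of $a$ and of $b$, plus an interior piece that is $O(n^{-N})$ for every $N$. The interior piece is handled by repeated integration by parts against $d(e^{inf(x)})/(inf'(x))$: every such step is legitimate on a fixed interior subinterval because $f'$ is bounded away from $0$ there.

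Next, for the contribution near $x=a$: because $f'(x)=(x-a)^{\rho-1}(b-x)^{\sigma-1}f_1(x)$ with $f_1(a)>0$, the map $u=u(x)$ defined implicitly by $u^\rho=\rho(f(x)-f(a))/[(b-a)^{\sigma-1}f_1(a)]$ is a smooth diffeomorphism from $[a,a+\delta]$ onto $[0,\delta']$ which linearizes the phase: $f(x)-f(a)=c_a u^\rho$ with $c_a=(b-a)^{\sigma-1}f_1(a)/\rho>0$. After this substitution the local integral becomes
\[
\int_0^{\delta'} h(u)\,e^{inc_a u^\rho}\,u^{\lambda-1}\,du,
\]
with $h$ smooth at $0$ and with explicit $h^{(k)}(0)$ computable from $\varphi$, $f_1$, and $b-a$. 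A further rescaling $v=c_a u^\rho$ and extension of the range to $[0,\infty)$ reduces each Taylor term of $h$ to $\int_0^\infty v^{(k+\lambda)/\rho-1}e^{inv}dv=\Gamma((k+\lambda)/\rho)\,e^{i(k+\lambda)\pi/(2\rho)}\,n^{-(k+\lambda)/\rho}$, the last identity being obtained by rotating the contour to the positive imaginary axis, which is justified because $(k+\lambda)/\rho>0$ so that the rotated integral still converges. Multiplying by $e^{inf(a)}$ and collecting signs yields exactly the claimed expression for $A_\nu(n)$.

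The contribution from the endpoint $b$ is handled by the mirror-image change of variables $u^\sigma=\sigma(f(b)-f(x))/[(b-a)^{\rho-1}f_1(b)]$, giving $f(b)-f(x)=c_b u^\sigma$ with $c_b>0$. The computation is structurally identical, except that the phase now reads $e^{inf(b)}e^{-inc_b u^\sigma}$, so the contour rotation must go into the lower half-plane and produces the factor $e^{-i(k+\mu)\pi/(2\sigma)}$ in place of $e^{+i(k+\lambda)\pi/(2\rho)}$; this is the source of the minus sign in the exponent of $B_\nu(n)$. The remainder estimates $|R_\nu(n)|\le Cn^{-\nu/\rho}$ and $|Q_\nu(n)|\le Cn^{-\nu/\sigma}$ come from truncating the Taylor expansion of $h$ at order $\nu-1$, writing the remainder as $u^\nu r(u)$ with $r\in C^0$, and integrating by parts enough times against $d(e^{inc_a u^\rho})$ (resp.\ $d(e^{-inc_b u^\sigma})$) to gain the required power of $n$.

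The step I expect to be the most delicate is the remainder control when $\rho=1$ (resp.\ $\sigma=1$), because then the phase is not truly degenerate at the endpoint and the integration-by-parts boundary terms land on the singular factor $u^{\lambda-1}$ (resp.\ $u^{\mu-1}$); one must verify that these boundary terms either vanish (when the surviving power of $u$ is positive) or are absorbed into the explicitly listed lower-order terms of the expansion. A secondary technical point is keeping the constants uniform as $\lambda,\mu$ approach the endpoints of $(0,1)$, which is what one ultimately needs when feeding this lemma into Lemma~\ref{L10} with $\lambda=\mu=1$.
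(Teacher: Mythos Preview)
The paper does not actually prove this lemma: it is quoted verbatim from the reference \cite{Pan} (Pan and Yu, \emph{Magnitude Estimates}, pp.~226--230, Theorem~10), and the authors simply cite it and move on. So there is no ``paper's own proof'' to compare against; your proposal supplies an argument where the paper offers none.

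Your outline is the standard Erd\'elyi-type endpoint expansion and is correct in its essentials: localize via a partition of unity, kill the interior contribution by repeated integration by parts against $d(e^{inf})/(inf')$, straighten the phase near each endpoint by the substitution $u^\rho\propto f(x)-f(a)$ (resp.\ $u^\sigma\propto f(b)-f(x)$), Taylor expand the resulting amplitude, and evaluate the model integrals $\int_0^\infty u^{s-1}e^{\pm inu}\,du$ by rotating the contour to the imaginary axis. The sign discrepancy in the exponential phases of $A_\nu$ versus $B_\nu$ is exactly as you identify. Your caveats about the case $\rho=1$ and about the limiting case $\lambda=\mu=1$ (needed for Remark~\ref{A03-1}) are well placed; the second of these is in fact the only form in which the paper invokes the lemma, so the uniformity you mention is the relevant issue. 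In short, your sketch is a faithful reconstruction of the classical proof that the cited source presumably contains.
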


\begin{remark}\label{A03-1}
When $\lambda=\mu=1,$ then
\begin{eqnarray}\label{}
\nonumber  I&=&\int_a^b\varphi(x)e^{inf(x)}dx\sim B_{\nu-1}(n)-A_{\nu-1}(n).
        \end{eqnarray}
   \end{remark}

\subsection{Proof of Lemma \ref{L05}}\label{s7.2}
\begin{proof} Suppose $ k+j\leq\Upsilon_1+1$ and $ l\leq {\Upsilon_2}$.
\vskip0.3cm
   i)When $k+j+l=0$, the conclusion follows from Lemmas \ref{L01}, \ref{L03} and \ref{L04}.
\vskip0.3cm
   ii)When $k+j+l=1$, define
   \begin{eqnarray}\label{}
   \nonumber g_1(h,t,\theta)&=&\partial_If_1(h-R,\theta)+\partial_If_2(h-R,t,\theta)+\partial_If_3(h-R,\theta);\\
    \nonumber g_2(h,t,\theta)&=&\partial_tf_2(h-R,t,\theta);\\
    \nonumber g_3(h,t,\theta)&=&\partial_\theta f_1(h-R,\theta)+\partial_\theta f_2(h-R,t,\theta)+\partial_\theta f_3(h-R,\theta);\\
      \nonumber \Delta (h,t,\theta)&=&1+\partial_If_1(h-R,\theta)+\partial_If_2(h-R,t,\theta)+\partial_If_3(h-R,\theta). \end{eqnarray}

      Obviously, $\Delta (h,t,\theta)\geq1/2$ for $h\gg1$ and
       \begin{eqnarray}\label{delta}
  \nonumber   \Delta\cdot \partial_hR(h,t,\theta)=  g_1(h,t,\theta),\ \ \ \Delta\cdot \partial_tR(h,t,\theta)=  g_2(h,t,\theta),\ \ \ \Delta\cdot \partial_\theta R(h,t,\theta)=  g_3(h,t,\theta). \\
   \end{eqnarray}
  From  Lemmas \ref{L01}-\ref{L04}, we obtain
    \begin{eqnarray}\label{}
   \nonumber \frac{1}{2}\Big|\partial_hR(h,t,\theta)\Big|&\leq&\Big|\Delta\cdot \partial_hR(h,t,\theta)\Big|\\
   \nonumber &=&\Big| \partial_If_1(h-R,\theta)+\partial_If_2(h-R,t,\theta)+\partial_If_3(h-R,\theta)\Big|\\
   \nonumber  &\leq&C(h-R)^{-\frac{1}{2}}\leq Ch^{-\frac{1}{2}},\\
  \nonumber \\
   \nonumber \frac{1}{2}\Big|\partial_tR(h,t,\theta)\Big|&\leq&\Big|\Delta\cdot \partial_tR(h,t,\theta)\Big|\\
   \nonumber &=&\Big| \partial_tf_2(h-R,t,\theta)\Big|\\
   \nonumber  &\leq&C(h-R)^{\frac{1}{2}}\leq Ch^{\frac{1}{2}},
    \end{eqnarray}
 and
     \begin{eqnarray}\label{}
   \nonumber \frac{1}{2}\Big|\partial_\theta R(h,t,\theta)\Big|&\leq&\Big|\Delta\cdot \partial_\theta R(h,t,\theta)\Big|\\
   \nonumber &=&\Big|\partial_\theta f_1(h-R,\theta)+\partial_\theta f_2(h-R,t,\theta)+\partial_\theta f_3(h-R,\theta)\Big|\\
   \nonumber  &\leq&C(h-R)^{\frac{1}{2}}\leq Ch^{\frac{1}{2}}.
    \end{eqnarray}
\vskip0.3cm
     iii)When $k+j+l=2$, From i) and ii), we have
       \begin{eqnarray}\label{}
   \nonumber \Big|\partial_hg_1(h,t,\theta)\Big|&\leq&Ch^{-1},\ \ \Big|\partial_tg_1(h,t,\theta)\Big|\leq Ch^{-\frac{1}{2}},\ \ \Big|\partial_\theta g_1(h,t,\theta)\Big| \leq Ch^{0};\\
    \nonumber \Big|\partial_hg_2(h,t,\theta)\Big|&\leq&Ch^{-\frac{1}{2}},\ \ \Big|\partial_tg_2(h,t,\theta)\Big|\leq Ch^{\frac{1}{2}},\ \ \ \Big|\partial_\theta g_2(h,t,\theta)\Big| \leq Ch^{\frac{1}{2}};\\
     \nonumber \Big|\partial_hg_3(h,t,\theta)\Big|&\leq&Ch^{0},\ \ \ \ \Big|\partial_tg_3(h,t,\theta)\Big|\leq Ch^{\frac{1}{2}},\ \ \ \Big|\partial_\theta g_3(h,t,\theta)\Big| \leq Ch^{1};
       \end{eqnarray}
and
  \begin{eqnarray}\label{}
       \nonumber \Big|\partial_h\Delta(h,t,\theta)\Big|&\leq&Ch^{-1},\ \ \ \ \Big|\partial_t\Delta(h,t,\theta)\Big|\leq Ch^{-\frac{1}{2}},\ \ \Big|\partial_\theta \Delta(h,t,\theta)\Big| \leq
       Ch^{0}.
         \end{eqnarray}
         From (\ref{delta}), differentiating on both sides of the
         equations, we obtain:
           \begin{eqnarray}\label{}
   \nonumber  \Delta\cdot \partial^2_hR(h,t,\theta)&=& \partial_h g_1(h,t,\theta)-\partial_h\Delta\cdot \partial_hR(h,t,\theta),\\
   \nonumber  \Delta\cdot \partial^2_tR(h,t,\theta)&=& \partial_t g_2(h,t,\theta)-\partial_t\Delta\cdot \partial_tR(h,t,\theta),\\
  \nonumber  \Delta\cdot \partial^2_\theta R(h,t,\theta)&=& \partial_\theta g_3(h,t,\theta)-\partial_\theta\Delta\cdot \partial_\theta R(h,t,\theta),\\
   \nonumber  \Delta\cdot \partial_{h}\partial_{t}R(h,t,\theta)&=& \partial_t g_1(h,t,\theta)-\partial_t\Delta\cdot \partial_hR(h,t,\theta),\\
   \nonumber  \Delta\cdot \partial_{h}\partial_{\theta}R(h,t,\theta)&=& \partial_\theta g_1(h,t,\theta)-\partial_\theta\Delta\cdot \partial_hR(h,t,\theta),\\
   \nonumber  \Delta\cdot \partial_{t}\partial_{\theta}R(h,t,\theta)&=& \partial_\theta g_2(h,t,\theta)-\partial_\theta\Delta\cdot \partial_tR(h,t,\theta).
    \end{eqnarray}
It follows that
 \begin{eqnarray}\label{}
    \nonumber \frac{1}{2}\Big|\partial^2_hR(h,t,\theta)\Big|&\leq&\Big|\partial_h g_1(h,t,\theta)\Big|+\Big|\partial_h\Delta\cdot \partial_hR(h,t,\theta)  \Big|\\
     \nonumber &\leq&Ch^{-1},
       \end{eqnarray}
        \begin{eqnarray}\label{}
    \nonumber \frac{1}{2}\Big|\partial^2_tR(h,t,\theta)\Big|&\leq&\Big|\partial_t g_2(h,t,\theta)\Big|+\Big|\partial_t\Delta\cdot \partial_tR(h,t,\theta)  \Big|\\
     \nonumber &\leq&Ch^{\frac{1}{2}},
       \end{eqnarray}
        \begin{eqnarray}\label{}
    \nonumber \frac{1}{2}\Big|\partial^2_\theta R(h,t,\theta)\Big|&\leq&\Big|\partial_\theta  g_3(h,t,\theta)\Big|+\Big|\partial_\theta \Delta\cdot \partial_\theta R(h,t,\theta)  \Big|\\
     \nonumber &\leq&Ch^{1},
       \end{eqnarray}
         \begin{eqnarray}\label{}
    \nonumber \frac{1}{2}\Big|\partial_{h}\partial_{t}R(h,t,\theta)\Big|&\leq&\Big|\partial_t  g_1(h,t,\theta)\Big|+\Big|\partial_t \Delta\cdot \partial_h R(h,t,\theta)  \Big|\\
     \nonumber &\leq&Ch^{-\frac{1}{2}},
       \end{eqnarray}
        \begin{eqnarray}\label{}
    \nonumber \frac{1}{2}\Big|\partial_{h}\partial_{\theta}R(h,t,\theta)\Big|&\leq&\Big|\partial_\theta  g_1(h,t,\theta)\Big|+\Big|\partial_\theta \Delta\cdot \partial_h R(h,t,\theta)  \Big|\\
     \nonumber &\leq&Ch^{0},
       \end{eqnarray}
        \begin{eqnarray}\label{}
    \nonumber \frac{1}{2}\Big|\partial_{t}\partial_{\theta}R(h,t,\theta)\Big|&\leq&\Big|\partial_\theta  g_2(h,t,\theta)\Big|+\Big|\partial_\theta \Delta\cdot \partial_t R(h,t,\theta)  \Big|\\
     \nonumber &\leq&Ch^{\frac{1}{2}}.
       \end{eqnarray}

       Generally, if
      $$ \Big|\partial^k_h\partial^l_t\partial^j_\theta  R(h,t,\theta)\Big|\leq Ch^{\frac{1}{2}-\frac{k}{2}+\frac{1}{2}(max\{1,j\}-1)},\ \mbox{for}\ 1\leq j+k+l\leq m,$$
      then
       $$\Big |\partial^k_h\partial^l_t\partial^j_\theta  g_1(h,t,\theta)\Big|\leq Ch^{-\frac{1}{2}-\frac{k}{2}+\frac{j}{2}},$$
 $$\Big |\partial^k_h\partial^l_t\partial^j_\theta  g_2(h,t,\theta)\Big|\leq Ch^{\frac{1}{2}-\frac{k}{2}},$$
  $$\Big |\partial^k_h\partial^l_t\partial^j_\theta  g_3(h,t,\theta)\Big|\leq Ch^{\frac{1}{2}-\frac{k}{2}+\frac{j}{2}},$$
   $$\Big |\partial^k_h\partial^l_t\partial^j_\theta  \Delta(h,t,\theta)\Big|\leq Ch^{-\frac{1}{2}-\frac{k}{2}+\frac{j}{2}},$$
The proof of these estimates is based on Leibniz's rule and a direct computation. Consequently,
by induction and Leibniz's rule again to (\ref{delta}), we obtain
 $$ \Big|\partial^k_h\partial^l_t\partial^j_\theta  R(h,t,\theta)\Big|\leq Ch^{\frac{1}{2}-\frac{k}{2}+\frac{1}{2}(max\{1,j\}-1)},\ \mbox{for}\ 1\leq j+k+l\leq m+1.$$
 \end{proof}

\subsection{Proof of Lemma \ref{L06}}

  \begin{proof} The estimates on $R_{01}$ are based on a direct
    computation and Lemmas \ref{L01} and \ref{L03}.

    The estimates on $R_{02}$ are based on  Leibniz's rule, Lemmas
    \ref{L01}--\ref{L05}, and the following claim. Readers can also refer to \cite{Liu01} (Lemma 3.4).
\vskip0.3cm
    \noindent Claim. For $h$ large enough, $\theta,\ t\in \mathbb{S}^1$,  $ k+j\leq\Upsilon_1-1$ and $ l\leq {\Upsilon_2}$, it holds that:
\begin{equation}\label{R02-1}
  \Big|\partial^k_h\partial^l_t\partial^j_\theta \partial_If_1(h-s R,\theta)\Big|\leq Ch^{-\frac{1}{2}-\frac{k}{2}+\frac{1}{2}(max\{1,j\}-1)};
 \end{equation}
 \begin{equation}\label{R02-2}
  \Big|\partial^k_h\partial^l_t\partial^j_\theta \partial_I^2f_1(h-s R,\theta)\Big|\leq Ch^{-\frac{3}{2}-\frac{k}{2}+\frac{1}{2}(max\{1,j\}-1)};
 \end{equation}
 \begin{equation}\label{R02-3}
 \Big|\partial^k_h\partial^l_t\partial^j_\theta \partial_If_2(h-s R,\theta)\Big|\leq
 Ch^{-\frac{1}{2}-\frac{k}{2}+\frac{1}{2}(max\{1,j\}-1)};
 \end{equation}
 \begin{equation}\label{R02-4}
 \Big|\partial^k_h\partial^l_t\partial^j_\theta \partial_I^2f_2(h-s R,\theta)\Big|\leq
 Ch^{-\frac{3}{2}-\frac{k}{2}+\frac{1}{2}(max\{1,j\}-1)};
 \end{equation}and
 \begin{equation}\label{R02-5}
 \Big|\partial^k_h\partial^l_t\partial^j_\theta f_3(h- R,\theta)\Big|\leq Ch^{-\frac{k}{2}+\frac{1}{2}(max\{1,j\}-1)}.
 \end{equation}
 \emph{Proof of (\ref{R02-1})}.

  When k+l+j=0, then
 \begin{equation}\label{}
 \nonumber \Big|\partial_If_1(h-s R,\theta)\Big|\leq Ch^{-\frac{1}{2}}.
 \end{equation}

 For $k+l+j>0$, using Leibniz's rule, $\partial^k_h\partial^l_t\partial^j_\theta f_1(h-s R,\theta)$ is the sum of terms
 $$
(\partial_{I}^{u}\partial_{\theta}^{v} \partial_If_1)\cdot
\Pi_{i=1}^{
u}\partial_{h_3}^{k_i}\partial_{t_3}^{l_i}\partial_{\theta}^{j_i}
(h-s R),$$ with $1\leq u+v\leq j+k+l,\ \sum_{i=1}^{u}k_i =k,\
\sum_{i=1}^{u}l_i =l,\ v+\sum_{i=1}^{u}j_i =j,\ and \
k_i+{j_i}+{l_i}\geq1,\ i=1,\ldots,u.$ Thus from Lemmas
\ref{L01} and \ref{L05}, it holds that
$$\Big|\partial^k_h\partial^l_t\partial^j_\theta \partial_If_1(h-s
R,\theta)\Big|\leq
Ch^{-\frac{1}{2}-\frac{k}{2}+\frac{1}{2}(max\{1,j\}-1)}.$$

The proofs of (\ref{R02-2}--\ref{R02-5}) are similar to the one of
(\ref{R02-1}). We omit it here.  \end{proof}

\subsection{Proof of Lemma \ref{L08}}
\begin{proof}
(\ref{S02-1}) follows from (\ref{S02}) and Lemma \ref{L01}.

From (\ref{T02}), it is easy to see
$$|\partial_{{h_2}} {t_1}|\leq C{h_2}^{-\frac{3}{2}},\ \
\partial_{{t_2}} {t_1}=1,\ \ |\partial_{\theta}
{t_1}|\leq C{h_2}^{-\frac{1}{2}}.$$

By a direct computation, for $k+l+j\geq2$ and $k+j\leq\Upsilon_1$,
\begin{eqnarray}\label{phi2-1}
  \nonumber |\partial^k_{{h_2}}\partial^l_{{t_2}}\partial^j_\theta
{t_1}|&=&|\partial^k_{{h_2}}\partial^l_{{t_2}}\partial^j_\theta(t_2-\partial_{
h_2}S_2(h_2,\theta))|\\
  \nonumber &=&|\partial^{k+1}_{{h_2}}\partial^l_{{t_2}}\partial^j_\theta
  S_2(h_2,\theta)|\\
  \nonumber   &\leq& C{h_2}^{-\frac{1}{2}-k+\frac{1}{2}(max\{2,j\}-2)}.
 \end{eqnarray}

Next, we consider the estimates on $R_{21}$. First, it holds that
$|R_{21}|\leq C$.

Suppose $k+j\leq\Upsilon_1-1.$
\vskip0.3cm
i) Consider
$\partial^k_{h_2}\partial^l_{t_2}\partial^j_\theta
R_{11}(h_2,t_2-\partial_{h_2}S_2(h_2,\theta),\theta)$. From (\ref{R021}) and By Leibniz's
rule, it is the  summation of terms $$
(\partial_{h_1}^{p}\partial_{t_1}^{q}\partial_{\theta}^{r} R_{11})
(\Pi_{i=1}^{
q}\partial_{h_2}^{k_i}\partial_{t_2}^{l_i}\partial_{\theta}^{j_i}
t_1) $$ with $1\leq p+q+r\leq j+k+l,\ p+\sum_{i=1}^{q}k_i =k,\
\sum_{i=1}^{q}l_i =l,\ r+\sum_{i=1}^{q}j_i =j,\ \mbox{and} \
k_i+{j_i}+{l_i}\geq1,\ i=1,\ldots,q,$ which implies that
$$|\partial^k_{h_2}\partial^l_{t_2}\partial^j_\theta R_{11}(h_2,t_2-\partial_{h_2}S_2(h_2,\theta),\theta)|\leq
C{h_2}^{-k+\frac{1}{2}(max\{1,j\}-1)},\ \mbox{for}\
l\leq{\Upsilon_2}.$$
\vskip0.3cm
ii) Similar to the part i),
 with Lemma \ref{L03}  we have
$$|\partial^k_{h_2}\partial^l_{t_2}\partial^j_\theta
(\partial_{t_1}f_2(h_2,\theta,t_2+\theta-\mu\partial_{h_2}S_2))|\leq
C{h_2}^{-k+\frac{1}{2}(max\{2,j\}-2)},\ \mbox{for}\
l\leq{\Upsilon_2}-1.$$ By Leibniz's rule and the estimates on
$\partial^k_{h_2}\partial^l_{t_2}\partial^j_\theta
S_2(h_2,\theta)$, it holds that
$$|\partial^k_{h_2}\partial^l_{t_2}\partial^j_\theta
(\partial_{t_1}f_2(h_2,\theta,t_2-\mu\partial_{h_2}S_2)\partial_{h_2}S_2)|\leq
C{h_2}^{-\frac{1}{2}-k+\frac{1}{2}(max\{2,j\}-2)},\ \mbox{for}\
l\leq{\Upsilon_2}-1,$$
which, together with parts i) and ii), implies that
$$|\partial^k_{h_2}\partial^l_{t_2}\partial^j_\theta R_{21}|\leq
C{h_2}^{-k+\frac{1}{2}(max\{1,j\}-1)},\ \mbox{for}\
l\leq{\Upsilon_2}-1.$$

The proof of $R_{22}$ is similar to the one of $R_{21}$, we omit it. \end{proof}

\subsection{Proof of Lemma \ref{LQ1}}
\begin{proof}
%\begin{eqnarray}\label{}
 %  \nonumber (h-R(h,t,\theta))^{\frac{1}{2}}\circ\Phi_1\circ\Phi_2\circ\Phi_3(h_3,t_3,\theta)&=&(h_3+h_2(h_3,t_3,\theta)-h_3-R(h_2(h_3,t_3,\theta),t(h_3,t_3,\theta),\theta))^{\frac{1}{2}}\\
  % &=&(h_3)^{\frac{1}{2}}(1+h_3^{-1}(h_2(h_3,t_3,\theta)-h_3-R(h_3,t_3,\theta)))^{\frac{1}{2}},\\
 %    \end{eqnarray}
%Denote
$Q=h_3^{-1}(h_2-h_3)-h_3^{-1}R$ implies that
     \begin{eqnarray}\label{}
\nonumber\Big|Q \Big|\leq C{h_3}^{-\frac{1}{2}}.
 \end{eqnarray}

Now we consider the estimates on derivatives.

 Suppose that
$l\leq{\Upsilon_2}-1,\  k+j\leq\Upsilon_1.$ Using Leibniz's rules, $
   \partial^k_{h_3}\partial^l_{t_3}\partial^j_{\theta}(h_3^{-1}h_2)$  is the summation of terms
     $\partial^{k_1}_{h_3}h_3^{-1}\cdot\partial^{k_2}_{h_3}\partial^l_{t_3}\partial^j_{\theta}h_2$, where
     $h_2=h_2(h_3,t_3,\theta).
  $
     Following Lemma \ref{L09}, we have
     \begin{eqnarray}\label{L091}
%\nonumber\Big|\partial_{{h_3}} {(h_3^{-1}h_2})\Big|\leq
%C{h_3}^{-\frac{3}{2}},\ \ \Big|\partial_{{t_3}} {(h_3^{-1}h_2)}\Big|\leq
%C{h_3}^{-\frac{1}{2}},\ \ \Big|\partial_{\theta} {(h_3^{-1}h_2)}\Big|\leq
%C{h_3}^{-\frac{1}{2}},
%\\
 \Big|\partial^k_{{h_3}}\partial^l_{{t_3}}\partial^j_\theta
{(h_3^{-1}h_2)}\Big|\leq C{h_3}^{-\frac{1}{2}-k},\ k+l+j\geq1.
 \end{eqnarray}

Next, consider
$\partial^k_{{h_3}}\partial^l_{{t_3}}\partial^j_\theta
(h_3^{-1}R(h_3,t_3,\theta)).$ Note that
$h=h_1=h_2=h_2(h_3,t_3,\theta)$ (see Lemma \ref{L09} for the
estimates), and $t=t(h_3,t_3,\theta)$. We first estimate $t=t(h_3,t_3,\theta)$, which can be regarded as the composition of $t=t(h_2,t_2,\theta)$ and $h_2=h_2(h_3,t_3,\theta),\ t_2=t_2(h_3,t_3,\theta).$

\vskip0.3cm
\emph{Step 1.}\  Consider $t=t(h_2,t_2,\theta)$ be the composition of
$t=t_1+\theta$, $t_1=t_1(h_2,t_2,\theta).$

 Following Leibniz's rule, $\partial^k_{{h_2}}\partial^l_{{t_2}}\partial^j_\theta
{t}(h_2,t_2,\theta)$ is the summation of terms
$$
(\partial_{t_1}^{q}\partial_{\theta}^{r} t) (\Pi_{i=1}^{
q}\partial_{h_2}^{k_i}\partial_{t_2}^{l_i}\partial_{\theta}^{j_i}
t_1),$$ with $1\leq q+r\leq j+k+l,\ \sum_{i=1}^{q}k_i =k,\
\sum_{i=1}^{q}l_i =l,\ r+\sum_{i=1}^{q}j_i =j,\ and \
k_i+{j_i}+{l_i}\geq1,\ i=1,\ldots,q.$

 Following  Lemma \ref{L08}, we have
 \begin{eqnarray}\label{t2}
 \nonumber  \Big|\partial_{{h_2}} {t}\Big|\leq C{h_2}^{-\frac{3}{2}},\ \
\frac{1}{2} \leq\Big|\partial_{{t_2}} {t}\Big|\leq 2,\ \
\frac{1}{2} \leq\Big|\partial_{\theta} {t}\Big|\leq  2,\\
 \Big|\partial^k_{{h_2}}\partial^l_{{t_2}}\partial^j_\theta
{t}\Big|\leq C{h_2}^{-\frac{1}{2}-k+\frac{1}{2}(max\{2,j\}-2)},\
k+l+j\geq2.
 \end{eqnarray}
\vskip0.3cm
 \emph{Step 2.}\  Consider $t=t(h_3,t_3,\theta)$ be the composition of $t=t(h_2,t_2,\theta)$, $h_2=h_2(h_3,t_3,\theta),\ t_2=t_2(h_3,t_3,\theta).$
 Following Leibniz's rule, $\partial^k_{{h_3}}\partial^l_{{t_3}}\partial^j_\theta
{t}$ is the summation of terms
$$
(\partial_{h_2}^{p}\partial_{t_2}^{q}\partial_{\theta}^{r} t)
(\Pi_{i=1}^{ p}\partial_{h_3}^{k_i}\partial_{t_3}^{l_i}\partial_{\theta}^{j_i} h_2)(\Pi_{i=p+1}^{ p+q}\partial_{h_3}^{k_i}\partial_{t_3}^{l_i}\partial_{\theta}^{j_i} t_2),$$
with $1\leq p+q+r\leq j+k+l,\ \sum_{i=1}^{p+q}k_i =k,\ \sum_{i=1}^{p+q}l_i =l,\
r+\sum_{i=1}^{p+q}j_i =j,\ and \ k_i+{j_i}+{l_i}\geq1,\ i=1,\ldots,p+q.$

 With Lemma \ref{L09} and the estimates (\ref{t2}),  we obtain directly the estimates on the function $ t(h_3,t_3,\theta)$ as following:
 \begin{eqnarray}\label{t3}
 \nonumber  \Big|\partial_{{h_3}} {t}\Big|\leq C{h_3}^{-\frac{3}{2}},\ \
c \leq\Big|\partial_{{t_3}} {t}\Big|\leq C,\ \
c \leq\Big|\partial_{\theta} {t}\Big|\leq  C,\\
 \Big|\partial^k_{{h_3}}\partial^l_{{t_3}}\partial^j_\theta
{t}\Big|\leq C{h_3}^{-\frac{1}{2}-k+\frac{1}{2}(max\{2,j\}-2)},\
k+l+j\geq2.
 \end{eqnarray}
\vskip0.4cm
 Then, let us consider $\partial^k_{{h_3}}\partial^l_{{t_3}}\partial^j_\theta R(h_3,t_3,\theta).$ By Leibniz's rule, it is the  summation of terms
$$
(\partial_{h}^{p}\partial_{t}^{q}\partial_{\theta}^{r} R)
(\Pi_{i=1}^{ p}\partial_{h_3}^{k_i}\partial_{t_3}^{l_i}\partial_{\theta}^{j_i} h_2)(\Pi_{i=p+1}^{ p+q}\partial_{h_3}^{k_i}\partial_{t_3}^{l_i}\partial_{\theta}^{j_i} t),$$
with $1\leq p+q+r\leq j+k+l,\ \sum_{i=1}^{p+q}k_i =k,\ \sum_{i=1}^{p+q}l_i =l,\
r+\sum_{i=1}^{p+q}j_i =j,\ and \ k_i+{j_i}+{l_i}\geq1,\ i=1,\ldots,p+q.$

 With Lemmas \ref{L05}, \ref{L09} and the estimates (\ref{t3}),  we obtain directly the estimates as following:
 $$\Big|\partial^k_{{h_3}}\partial^l_{{t_3}}\partial^j_\theta R(h_3,t_3,\theta)\Big|\leq Ch_3^{\frac{1}{2}-\frac{k}{2}+\frac{1}{2}(max\{1,j\}-1)},\ l\leq{\Upsilon_2}-1,\  k+j\leq\Upsilon_1.$$

Therefore,
 \begin{eqnarray}\label{L092}
   \nonumber \Big|\partial^k_{{h_3}}\partial^l_{{t_3}}\partial^j_\theta (h_3^{-1}R)\Big|\leq Ch_3^{-\frac{1}{2}-\frac{k}{2}+\frac{1}{2}(max\{1,j\}-1)},\ l\leq{\Upsilon_2}-1,\  k+j\leq\Upsilon_1,
 \end{eqnarray}
which, together with (\ref{L091}), implies that
\begin{eqnarray}\label{}
  \nonumber \Big|\partial^k_{{h_3}}\partial^l_{{t_3}}\partial^j_\theta Q\Big|\leq Ch_3^{-\frac{1}{2}-\frac{k}{2}+\frac{1}{2}(max\{1,j\}-1)},\ l\leq{\Upsilon_2}-1,\  k+j\leq\Upsilon_1.
 \end{eqnarray}

\vskip0.4cm
 Finally, we consider the expression of $\partial^2_\theta Q$. Note that
 $Q=h_3^{-1}(h_2-h_3)-h_3^{-1}R$.
We have
\begin{eqnarray}\label{}
\nonumber \partial_\theta(h_2-h_3)&=&\partial_\theta(\partial_{ t_2}S_3(h_3,t_2,\theta))\\
\nonumber&=& \partial^2_{ t_2}S_3(h_3,t_2,\theta)t_{2,\theta}+\partial_{ \theta}\partial_{ t_2}S_3(h_3,t_2,\theta),
 \end{eqnarray}
 with $t_{2,\theta}=\partial_\theta t_2$, and thus,
\begin{eqnarray}\label{}
\nonumber \partial^2_\theta(h_2-h_3)&=&\partial_\theta(\partial^2_{ t_2}S_3(h_3,t_2,\theta)t_{2,\theta}+\partial_{ \theta}\partial_{ t_2}S_3(h_3,t_2,\theta))\\
\nonumber&=&\partial^3_{ t_2}S_3(h_3,t_2,\theta)(t_{2,\theta})^2+2\partial_\theta\partial^2_{ t_2}S_3(h_3,t_2,\theta)t_{2,\theta}+ \partial^2_{ t_2}S_3(h_3,t_2,\theta)t_{2,\theta\theta}\\
\nonumber&&+\partial^2_\theta\partial_{ t_2}S_3(h_3,t_2,\theta).
 \end{eqnarray}
Then, from Lemma \ref{L09},  it holds that
\begin{eqnarray}\label{h23}
 \Big|\partial^2_\theta(h_2-h_3 )\Big|\leq C
h_3^{\frac{1}{2}}.
 \end{eqnarray}

Recall
\begin{equation*}\label{}
    R(h_3,t_3,\theta)=f_1(h,\theta)+f_2(h,t,\theta)+\frac{1}{n}\psi(x)-R_{01}(h,t,\theta)-R_{02}(h,t,\theta),
 \end{equation*}
where
$f_1(h,\theta)=\frac{1}{n}G(\sqrt{\frac{2}{n}}h^{\frac{1}{2}}\cos{n\theta}),\
f_2(h,\theta,t)=-\frac{1}{n}\sqrt{\frac{2}{n}}h^{\frac{1}{2}}\cos{n\theta}
p(t),\ h=h_2=h_2(h_3,t_3,\theta),$ and $t=t(h_3,t_3,\theta)$. The
estimates of $R$ are divided into the following five parts:
\vskip0.3cm
(i) Consider $f_1(h,\theta)=\frac{1}{n}G(x)$ with
$x=\sqrt{\frac{2}{n}}h^{\frac{1}{2}}\cos{n\theta}$.
 We have $$n\partial_\theta\Big(
 f_1\Big|_{(h_3,t_3,\theta)}\Big)=G'(x)(x_hh_{2,\theta}+x_\theta),$$
and
\begin{eqnarray}\label{rr01}
\nonumber n\partial^2_\theta\Big(
 f_1\Big|_{(h_3,t_3,\theta)}\Big)&=&G''(x)(x_hh_{2,\theta}+x_\theta)^2\\
\nonumber
&&+G'(x)(x_{hh}(h_{2,\theta})^2+2x_{h\theta}h_{2,\theta}+x_{h}h_{2,\theta\theta}+x_{\theta\theta})\\
&=&g_1(h_3,t_3,\theta)+g_2(h_3,t_3,\theta)\sin^2 n\theta
 \end{eqnarray}
with $|g_1|\leq Ch_3^{\frac{1}{2}},\ |g_2|\leq Ch_3$ by Lemma
\ref{L09}.
\vskip0.3cm
(ii) Consider $f_2(h,t,\theta)=-\frac{1}{n}xp(t)$ with
$x=\sqrt{\frac{2}{n}}h^{\frac{1}{2}}\cos{n\theta}$.
$$-n\partial_\theta\Big(
 f_2\Big|_{(h_3,t_3,\theta)}\Big)=(x_hh_{2,\theta}+x_\theta)p(t)+xp'(t)t_\theta,$$
and
\begin{eqnarray}\label{}
\nonumber n\partial^2_\theta
 \Big(
 f_2\Big|_{(h_3,t_3,\theta)}\Big)&=&p(t)(x_{hh}(h_{2,\theta})^2+2x_{h\theta}h_{2,\theta}+x_{h}h_{2,\theta\theta}+x_{\theta\theta})\\
\nonumber&&+(x_hh_{2,\theta}+x_\theta)p'(t)t_\theta+xp''(t)t_\theta+xp'(t)t_{\theta\theta}.
 \end{eqnarray}
 By Lemma
\ref{L09} and (\ref{t3}), it holds that
\begin{eqnarray}\label{rr02}
\Bigg|\partial^2_\theta
 \Big(
 f_2\Big|_{(h_3,t_3,\theta)}\Big)\Bigg|\leq Ch_3^{\frac{1}{2}}. \end{eqnarray}

\vskip0.3cm
(iii) Consider $\psi(x)$ with
$x=\sqrt{\frac{2}{n}}h^{\frac{1}{2}}\cos{n\theta}$.
 Similar to part (i),  it holds that $$\partial_\theta\Big(
\psi(x)\Big|_{(h_3,t_3,\theta)}\Big)=\psi'(x)(x_hh_{2,\theta}+x_\theta),$$
and
\begin{eqnarray}\label{rr03}
\nonumber \partial^2_\theta\Big(
 \psi(x)\Big|_{(h_3,t_3,\theta)}\Big)&=&-\psi(x)(x_hh_{2,\theta}+x_\theta)^2\\
\nonumber
&&+\psi'(x)(x_{hh}(h_{2,\theta})^2+2x_{h\theta}h_{2,\theta}+x_{h}h_{2,\theta\theta}+x_{\theta\theta})\\
&=&g_3(h_3,t_3,\theta)+g_4(h_3,t_3,\theta)\sin^2 n\theta
 \end{eqnarray}
with $|g_3|\leq Ch_3^{\frac{1}{2}},\ |g_4|\leq Ch_3$ by Lemma
\ref{L09}.
\vskip0.3cm
(iv) Consider $R_{01}(h,t,\theta)$, $$\partial_\theta\Big(
 R_{01}\Big|_{(h_3,t_3,\theta)}\Big)=R_{01,h}h_{2,\theta}+R_{01,t}t_\theta+R_{01,\theta},$$
and
\begin{eqnarray}\label{}
\nonumber \partial^2_\theta
 \Big(
 R_{01}\Big|_{(h_3,t_3,\theta)}\Big)&=&(R_{01,hh}h_{2,\theta}+R_{01,ht}t_\theta+R_{01,h\theta})h_{2,\theta}\\
\nonumber
&&+(R_{01,th}h_{2,\theta}+R_{01,tt}t_\theta+R_{01,t\theta})t_\theta\\
\nonumber &&+R_{01,\theta h}h_{2,\theta}+R_{01,\theta
t}t_\theta+R_{01,\theta\theta}+R_{01,h}h_{2,\theta\theta}+R_{01,t}t_{\theta\theta}
 \end{eqnarray}
 By Lemmas
\ref{L06}, \ref{L09} and (\ref{t3}), it holds that
\begin{eqnarray}\label{rr04}
\Bigg|\partial^2_\theta
 \Big(
 R_{01}\Big|_{(h_3,t_3,\theta)}\Big)\Bigg|\leq Ch_3^{\frac{1}{2}}. \end{eqnarray}

\vskip0.3cm
(v) Similar to case (iv), we have
\begin{eqnarray}\label{rr05}
\Bigg|\partial^2_\theta
 \Big(
 R_{02}\Big|_{(h_3,t_3,\theta)}\Big)\Bigg|\leq C.
 \end{eqnarray}

From (\ref{rr01})-(\ref{rr05}), we have
\begin{eqnarray}\label{rr06}
\partial^2_\theta
 \Big(
 R\Big|_{(h_3,t_3,\theta)}\Big)=g_{}(h_3,t_3,\theta)+g_6(h_3,t_3,\theta)\sin^2 n\theta
 \end{eqnarray}
with $|g_{}|\leq Ch_3^{\frac{1}{2}},\ |g_6|\leq Ch_3$.
\vskip0.3cm
Finally, with (\ref{h23}) and (\ref{rr06}), (\ref{Q02}) is
proved.
\end{proof}

\end{document}